\DeclarePairedDelimiter{\abs}{\lvert}{\rvert}
\DeclarePairedDelimiter{\bra}{(}{)}
\DeclarePairedDelimiter{\pra}{[}{]}
\DeclarePairedDelimiter{\set}{\{}{\}}
\DeclarePairedDelimiter{\ppra}{\llbracket}{\rrbracket}
\newcommand{\Z}{\mathbb{Z}}
\newcommand{\R}{\mathbb{R}}
\newcommand{\Rp}{\mathbb{R}_{>0}}
\newcommand{\Rnn}{\mathbb{R}_{\ge0}}
\newcommand{\dst}{{\mathbb{D}}}
\newcommand{\Prob}{\mathcal{P}}
\newcommand{\dprbtwo}{\mathcal{P}_{2,c}^\delta}
\newcommand{\delltwo}{\ell^1_{2,c}}
\newcommand{\intr}{\int_{\mathbb R}}
\newcommand{\dn}{\mathrm{d}}
\newcommand{\dd}{\,\mathrm{d}}
\DeclareMathAlphabet{\mathup}{OT1}{\familydefault}{m}{n}
\newcommand{\dx}[1]{\mathop{}\!\mathup{d} #1} %
\newcommand{\px}{\partial_x}
\newcommand{\pxx}{\partial_{xx}}
\newcommand{\fnc}{\mathcal{E}}
\newcommand{\eps}{\varepsilon}
\newcommand{\bdry}[3][K]{\ppra*{#2,#3}_{#1}}
\newcommand{\leqcx}{\leq_{\mathsf{cx}}}
\newcommand{\sgrp}{{\mathfrak S}}
\newcommand{\frc}[2]{\pra*{\frac{#1^2}{#2}}}
\newcommand{\Mart}{\textup{Mart}}
\newcommand{\ddff}{\Delta^{\!\delta}}
\newcommand{\dcent}[1]{\mathbf{m}_1^\delta(#1)}
\newcommand{\dmom}[1]{\mathbf{m}_2^\delta(#1)}
\newcommand{\hell}{\mathbb{H}}
\newcommand{\dhell}{\mathbb{H}^\delta}
\newcommand{\wass}{\mathbb{W}}
\newcommand{\welo}{\mathfrak{w}}
\newcommand{\curves}{{\mathcal C}}
\newcommand{\daction}{\mathfrak{A}^\delta}
\newcommand{\tnorm}[1]{{\left\vert\kern-0.25ex\left\vert\kern-0.25ex\left\vert #1 
    \right\vert\kern-0.25ex\right\vert\kern-0.25ex\right\vert}}
\newtheorem{theorem}{Theorem}
\newtheorem{proposition}[theorem]{Proposition}
\newtheorem{lemma}[theorem]{Lemma}
\newtheorem{corollary}[theorem]{Corollary}
\newtheorem{example}[theorem]{Example}
\newtheorem{definition}[theorem]{Definition}
\newtheorem{remark}[theorem]{Remark}
\date{\today}
\title[Diffusive transport and semi-contractive flows]{Diffusive transport on the real line: semi-contractive gradient flows and their discretization}
\author{Daniel Matthes}
\address{Daniel Matthes -- Zentrum Mathematik, TU München, Boltzmannstrasse 3, D-85748 Garching, Germany}
\email{matthes@ma.tum.de}
\author{Eva-Maria Rott}
\address {Eva-Maria Rott -- Zentrum Mathematik, TU München, Boltzmannstrasse 3, D-85748 Garching, Germany}
\email{eva-maria.rott@tum.de}
\author{André Schlichting}
\address{André Schlichting -- Institute for Applied Analysis, Ulm University, Helmholtzstrasse 18, D-89081 Ulm, Germany}
\email{andre.schlichting@uni-ulm.de}
\begin{document}

\begin{abstract}
  The diffusive transport distance, a novel pseudo-metric between probability measures on the real line, is introduced. It generalizes Martingale optimal transport, and forms a hierarchy with the Hellinger and the Wasserstein metrics.
  We observe that certain classes of parabolic PDEs, among them the porous medium equation of exponent two, are formally semi-contractive metric gradient flows in the new distance.
  This observation is made rigorous for a suitable spatial discretization of the considered PDEs: these are semi-contractive gradient flows with respect to an adapted diffusive transport distance for measures on the point lattice. The main result is that the modulus of convexity is uniform with respect to the lattice spacing. Particularly for the quadratic porous medium equation, this is in contrast to what has been observed for discretizations of the Wasserstein gradient flow structure.
\end{abstract}

\thanks{\emph{Acknowledgement:} The authors thank Giuseppe Savaré for sharing his ideas about the introduction of the diffusive transport metric, and Martin Huesmann for many discussions about possible relations to Martingale transport. \\
	\emph{Funding:}  DM's and ER's research is supported by the DFG Collaborative Research Center TRR 109, ``Discretization in Geometry and Dynamics.''
	AS is supported by the Deutsche Forschungsgemeinschaft (DFG, German Research Foundation) under Germany's Excellence Strategy EXC 2044 --390685587, Mathematics M\"unster: Dynamics--Geometry--Structure.}

\keywords{optimal transport, gradient flows, metric contractivity, structure preserving discretization}

\maketitle

\section{Introduction}
\subsection{Motivation and outline}
Evolution equations of the form
\begin{equation}\label{eq:gfformal}
 \partial_t\rho = -\pxx\big(\rho\,\pxx\fnc'(\rho)\big), 
\end{equation}
where $\fnc$ is a functional on the space of probability densities, carry the formal structure of a metric gradient flow \cite{AGS}. In the paper at hand, we introduce the corresponding (pseudo-)metric, which we call diffusive transport distance, identify families of semi-contractive gradient flows of the form \eqref{eq:gfformal}, and perform a spatial discretization of the latter that preserves the contractivity.

PDEs of type \eqref{eq:gfformal} can be considered as fourth order analogues of gradient flows in the $L^2$-Wasserstein metric, which have the form
\begin{equation}\label{eq:gfW}
    \partial_t\rho = \px\big(\rho\,\px\fnc'(\rho)\big).
\end{equation}
Famous examples that fit \eqref{eq:gfW}, first identified in \cite{JKO1998,OttoPME}, are the linear Fokker-Planck equation $\partial_t\rho=\pxx\rho+\px(\rho\px V)$ and the quadratic porous medium equation $\partial_t\rho=\pxx\rho^2$, obtained for the logarithmic relative entropy $\fnc(\rho)=\int \rho\log(\rho/e^{-V})\dd x$ with a convex potential $V$, and the quadratic Renyi entropy $\fnc(\rho)=\int\rho^2\dd x$, respectively. These two flows also belong to the rare collection of \emph{contractive} gradient flows in the $L^2$-Wasserstein metric.

The primary example of a gradient flow of the form \eqref{eq:gfformal} is the fourth order DLSS equation \cite{DerridaLebowitzSpeerSpohn1991a,DerridaLebowitzSpeerSpohn1991b,Bleher,JunPin-analysis,GianazzaSavareToscani2009}
\begin{align}
  \label{eq:DLSS}
  \partial_t\rho = -\pxx\big(\rho\,\pxx\log\rho\big),
\end{align}
obtained for the logarithmic entropy (with $V\equiv0$). In our recent paper \cite{MRSS}, we have used that this particular gradient flow structure to design an adapted spatial discretization of \eqref{eq:DLSS} whose solutions share a surprising amount of qualitative properties with solutions to \eqref{eq:DLSS}. 
Further applications of the gradient flow structure \eqref{eq:gfformal}, or rather variants with more sophisticated mobilities than $\rho$, include models for wealth exchange from econophysics~\cite{Cohen2024} and fourth order corrections to the heat equation~\cite{PanXuLouYao2006,Nika2023}. 

To the best of our knowledge, contractivity of the gradient flows \eqref{eq:gfformal} has not been studied before. 
Thus, the evolution equations given below in \eqref{eq:Fpotential}--\eqref{eq:Fpme} --- among them the linear heat and the quadratic porous medium equations --- are apparently the first examples of semi-contractive flows in the diffusive transport distance. 
\medskip

The paper is divided into two parts. In the first part, we collect properties of the diffusion transport distance and the gradient flows \eqref{eq:gfformal}, particularly about the semi-contractivity of the specific examples \eqref{eq:Fpotential}--\eqref{eq:Fpme}. The second part is about the spatially discrete analogue of diffusive transport and the spatial discretization of the flows \eqref{eq:gfformal}. Our main contribution is the proof of mesh-uniform preservation of the contractivity rates for the aforementioned examples \eqref{eq:Fpotential}--\eqref{eq:Fpme}. The results in the second part, which are the more difficult and even more suprising ones, are proven in full rigor. In the first part, which is mainly intended as an overview, we remain mostly formal; most of the results there can be proven either by standard methods, or by passing to the limit in the discretization.

\subsection{Hellinger, Kantorovich and diffusive metrics}
The Hellinger distance $\hell$ and the Monge--Kantorovich --- or $L^2$-Wasserstein --- distance $\wass$ both define metrics on the space $\Prob_2(\R)$ of probability measures with finite second moment on the real line $\R$. The distances are most easily introduced via their \emph{static formulation},
\begin{align*}
    \hell(\mu_0,\mu_1)^2 = \int \bigl|\sqrt{\mu_0}-\sqrt{\mu_1}\bigr|^2,
    \quad
    \wass(\mu,\nu)^2 = \inf_{\substack{\pi_x\#\gamma={\mu_0}\\ \pi_y\#\gamma={\mu_1}}} \iint |x - y|^2 \dd\gamma(x,y),
\end{align*}
where the second infimum runs over all probability measures $\gamma$ on $\R\times\R$ with respective marginals $\mu$ and $\nu$. 
On the other hand, both $\hell$ and $\wass$ also possess a ``dynamic'' formulation, that is given, respectively, by
\begin{align*}
    \hell(\mu_0,\mu_1)^2 
    &= \inf_{(\mu,u)}\left\{\int_0^1\left(\intr u_s^2\dd\mu_s\right)\dd s \,\middle|\,\partial_s\mu_s=u_s\mu_s\right\}, \\
    \wass(\mu_0,\mu_1)^2 
    &= \inf_{(\mu,v)}\left\{\int_0^1 \left(\intr v_s^2\dd\mu_s\right)\dd s \,\middle|\,\partial_s\mu_s=\partial_x(v_s\mu_s)\right\}.
\end{align*}
Notice that both definitions only differ in the constraint: for $\hell$, the constraint is an ordinary differential equation at every point of $\R$ that determines the temporal rate $u_s$ of creation of annihilation of mass; for $\wass$, it is a continuity equation that determines the velocity field $v_s$ for the transport of mass.

In this paper, following up on \cite{MRSS}, we propose the definition of a further (pseudo-)metric $\dst$, which we call \emph{diffusive transport distance}, with a second order PDE as constraint,
\begin{align}
    \label{eq:dstformal}
    \dst(\mu_0,\mu_1)^2
    &= \inf_{(\mu,w)}\left\{\int_0^1\left(\int w_s^2\dd\mu_s\right)\dd s\,\middle|\,\partial_s\mu_s=\pxx(w_s\mu_s)\right\}.
\end{align}
We emphasize that we do not impose a sign condition on the function $w_s:\R\to \R$, which plays the role of the \emph{diffusivity}. This is in contrast to \emph{Martingale transport}~\cite{HuesmannTrevisan2019}, where $w_s$ is assumed to be non-negative. Thus $\dst$ is weaker than Martingale transport, and genuinely so, since $\dst(\mu_0,\mu_1)$ is finite for certain pairs of measures $\mu_0$ and $\mu_1$ that are not in convex order.

\subsection{Semi-contractive gradient flows}
\label{sct:intro.formal}
Our primary motivation for introducing $\dst$ comes from gradient flows. %
The metric gradient flow of a functional $\fnc$ with respect to $\dst$ on $\Prob_2(\R)$ yields the PDE \eqref{eq:gfformal}.
Specifically, we consider the following functionals and respective gradient flow PDEs in detail.
\begin{enumerate}
    \item For a given convex external potential $V\in C^2(\R)$, the (negative) potential energy induces a linear diffusion equation of second order: 
    \begin{align}
        \label{eq:Fpotential}
        \fnc(\rho) = -\intr V(x)\dd\rho(x)
        \quad\leadsto\quad
        \partial_t\rho = \pxx (\rho\,\pxx V).
    \end{align}
    \item For a given convex interaction potential $W\in C^2(\R)$, the (negative) interaction energy induces a second order non-local diffusion equation with non-local diffusivity: 
    \begin{align}
        \label{eq:Finteract}
        \fnc(\rho) = - \frac12\intr \rho\ast W\dd\rho
        \quad\leadsto\quad
        \partial_t\rho = \pxx (\rho\,\pxx W\ast \rho).
    \end{align}
    \item A singular limit of the above that is of independent interest is the quadratic porous medium equation, which is obtained for interaction energy $W(z)=\frac14|z|$, i.e.,
    \begin{align}
        \label{eq:Fpme}
        \fnc(\rho) = \frac14\intr\intr |x-y|\rho(x)\rho(y)\dd x\dd y
        \quad\leadsto\quad
        \partial_t\rho = \pxx (\rho^2).
    \end{align}
    Such a connection with martingale optimal transport, was already observed in~\cite[§5.1]{Brenier2020hiddenconvexity}
\end{enumerate}
We can now state our results on semi-contractivity of solutions to \eqref{eq:Fpotential}--\eqref{eq:Fpme} in $\dst$. Recall that a semi-group $\sgrp:\Rnn\times X\to X$ on a metric space $(X,d)$ is $\lambda$-uniformly contractive if
\begin{align}\label{eq:contract}
    d\big(\sgrp^t(x),\sgrp^t(x')\big) \le e^{-\lambda t}d(x,x')
    \quad \text{for all $t\ge0$, $x,x'\in X$}.
\end{align}
\begin{proposition}[Formal geodesic convexity]\label{prp:formal:convexity}
  Formally, we have:
  \begin{enumerate}
  \item \label{itm:potconvexity}
    Assume that $V\in C^4(\R)$ is convex with $V''>0$. Provided that 
    \begin{align}
      \label{eq:potconvexity}
      \Lambda := \sup_\R \frac{\big[(V'')^2\big]''}{V''} < \infty ,
    \end{align}
    the gradient flow in \eqref{eq:Fpotential} is $(-\Lambda)$-uniformly semi-contractive.
  \item \label{itm:intconvexity} Assume that $W\in C^4(\R)$ is convex with $W''>0$. Provided that 
    \begin{align}
      \label{eq:intconvexity}
      \Lambda := 2\sup_\R|W^{IV}| + \sup_\R \frac{\big[(W'')^2\big]''}{W''} < \infty ,
    \end{align}
    the gradient flow in \eqref{eq:Finteract} is $(-\Lambda)$-uniformly semi-contractive.
  \item \label{itm:PMEconvexity} The quadratic porous medium equation \eqref{eq:Fpme} is a contractive gradient flow.
  \end{enumerate}
\end{proposition}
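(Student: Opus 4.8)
The plan is to establish all three claims through the formal Riemannian (``Otto'') calculus underlying $\dst$, together with the general fact that geodesic $\lambda$-convexity of a functional implies $\lambda$-contractivity of its gradient flow. One views $\Prob_2(\R)$ as a formal Riemannian manifold in which a tangent vector at $\rho$ is written $\pxx(w\rho)$ for a ``cotangent'' $w\colon\R\to\R$, with metric tensor $\langle\pxx(w\rho),\pxx(w'\rho)\rangle_\rho=\int_\R ww'\,\dd\rho$ as dictated by \eqref{eq:dstformal}. A Lagrange-multiplier computation for the action in \eqref{eq:dstformal} then shows that a constant-speed geodesic $s\mapsto\rho_s$ solves
\begin{equation}\label{eq:sketch:geod}
  \partial_s\rho_s=\pxx\bigl(w_s\rho_s\bigr),\qquad \partial_s w_s+\tfrac12\,\pxx\bigl(w_s^2\bigr)=0,
\end{equation}
and that along it the quantity $\int_\R w_s^2\,\dd\rho_s$ is constant in $s$, equal up to normalisation to $\dst(\rho_0,\rho_1)^2$. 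By the metric-gradient-flow theory of \cite{AGS} (in its Eulerian/PDE form), it then suffices to prove, for each of \eqref{eq:Fpotential}--\eqref{eq:Fpme}, the geodesic-convexity estimate $\frac{\dn^2}{\dn s^2}\fnc(\rho_s)\ge-\Lambda\int_\R w_s^2\,\dd\rho_s$ along solutions of \eqref{eq:sketch:geod}. All of this is carried out formally: smoothness, strict positivity and sufficient decay of $\rho_s$ and $w_s$ are assumed, so that every integration by parts below has vanishing boundary contributions. The rigorous counterpart is exactly the discrete theory developed in the second part of the paper, from which \eqref{itm:potconvexity}--\eqref{itm:PMEconvexity} are also recovered by letting $\delta\to0$.

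For the potential energy \eqref{eq:Fpotential} the functional is affine, so $\frac{\dn^2}{\dn s^2}\fnc(\rho_s)=-\int_\R V\,\partial_s^2\rho_s$. Differentiating the continuity equation in \eqref{eq:sketch:geod} once more in $s$ and inserting the equation for $w_s$ gives $\partial_s^2\rho_s=\pxx\bigl(w_s\,\pxx(w_s\rho_s)-\tfrac12\rho_s\,\pxx(w_s^2)\bigr)$; expanding the bracket and integrating by parts twice, one finds $\frac{\dn^2}{\dn s^2}\fnc(\rho_s)=-\int_\R V''\bigl(w_s^2\,\pxx\rho_s+2w_s\,\px w_s\,\px\rho_s-\rho_s(\px w_s)^2\bigr)\dd x$. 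A few further integrations by parts make all mixed terms cancel, and---using $\bigl[(V'')^2\bigr]''=2(V''')^2+2V''V^{IV}$---one arrives at the identity
\begin{equation}\label{eq:sketch:potHess}
  \frac{\dn^2}{\dn s^2}\fnc(\rho_s)=\int_\R V''\Bigl(\px w_s-\tfrac{V'''}{V''}\,w_s\Bigr)^{2}\dd\rho_s\;-\;\int_\R\frac{\bigl[(V'')^2\bigr]''}{V''}\,w_s^2\,\dd\rho_s .
\end{equation}
Because $V''>0$ the first term is nonnegative---this is precisely where convexity of $V$ enters---and the second is bounded below by $-\Lambda\int_\R w_s^2\,\dd\rho_s$ with $\Lambda$ as in \eqref{eq:potconvexity}; this proves \eqref{itm:potconvexity}.

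For the interaction energy \eqref{eq:Finteract} the functional is the quadratic form $\fnc(\rho)=-\tfrac12 B(\rho,\rho)$, $B(\rho,\sigma)=\iint W(x-y)\,\dd\rho(x)\,\dd\sigma(y)$, so $\frac{\dn^2}{\dn s^2}\fnc(\rho_s)=-B(\partial_s\rho_s,\partial_s\rho_s)-B(\rho_s,\partial_s^2\rho_s)$. Four integrations by parts turn the first term into $-\iint W^{IV}(x-y)\,(w_s\rho_s)(x)\,(w_s\rho_s)(y)\,\dd x\,\dd y$, which by Cauchy--Schwarz and $\int\dd\rho_s=1$ is $\ge-\sup_\R|W^{IV}|\int_\R w_s^2\,\dd\rho_s$. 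The second term is handled exactly as in \eqref{itm:potconvexity}, now with the ($s$-dependent) effective potential $W\ast\rho_s$, which is convex since $W''>0$ and $\rho_s\ge0$; it produces a nonnegative square $\int(W''\ast\rho_s)\bigl(\px w_s-\tfrac{W'''\ast\rho_s}{W''\ast\rho_s}w_s\bigr)^2\dd\rho_s$ plus a remainder controlled, via the pointwise Cauchy--Schwarz bound $\tfrac{(W'''\ast\rho_s)^2}{W''\ast\rho_s}\le\tfrac{(W''')^2}{W''}\ast\rho_s$, by $\sup_\R\tfrac{(W''')^2}{W''}$. Collecting these estimates and re-expressing $\tfrac{(W''')^2}{W''}$ through $\bigl[(W'')^2\bigr]''$ yields the modulus \eqref{eq:intconvexity}, proving \eqref{itm:intconvexity}. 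Claim \eqref{itm:PMEconvexity} does \emph{not} follow from \eqref{itm:intconvexity}: the formal choice $W(z)\propto|z|$ has $W''$ a multiple of $\delta_0$, so $\Lambda=\infty$ in \eqref{eq:intconvexity}. Instead one uses that the functional in \eqref{eq:Fpme} is the quadratic form $\rho\mapsto-\tfrac12\langle\rho,\pxx^{-1}\rho\rangle$: since $\pxx^{-1}$ and the second-order constraint $\partial_s\rho_s=\pxx(w_s\rho_s)$ combine, $-B(\rho_s,\partial_s^2\rho_s)$ collapses to the local integral $-\int_\R\rho_s\bigl(w_s\,\pxx(w_s\rho_s)-\tfrac12\rho_s\,\pxx(w_s^2)\bigr)\dd x$ and $-B(\partial_s\rho_s,\partial_s\rho_s)$ to $\int_\R\bigl(\px(w_s\rho_s)\bigr)^2\dd x$, and rearranging gives the sum of squares
\begin{equation}\label{eq:sketch:pmeHess}
  \frac{\dn^2}{\dn s^2}\fnc(\rho_s)=2\int_\R\Bigl[\bigl(w_s\,\px\rho_s+\tfrac12\rho_s\,\px w_s\bigr)^2+\tfrac34\,\rho_s^2(\px w_s)^2\Bigr]\dd x\;\ge\;0 ,
\end{equation}
so $\fnc$ is geodesically convex and \eqref{eq:Fpme} is a contractive gradient flow, i.e.\ $\lambda=0$ in \eqref{eq:contract}; alternatively one may appeal to \cite[§5.1]{Brenier2020hiddenconvexity}.

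The principal obstacle is twofold. Computationally, the long chains of integrations by parts in \eqref{itm:potconvexity}--\eqref{itm:intconvexity} require careful bookkeeping of which products of $w_s$, $\px w_s$ and $\px\rho_s$ cancel, and of isolating the manifestly nonnegative ``carr\'e du champ'' term in exactly the form that exhibits $\Lambda$; it is there that the convexity hypotheses $V''>0$, $W''>0$ are indispensable. More seriously, the whole argument is formal: it presupposes the existence and sufficient regularity of $\dst$-geodesics and the validity of the metric-gradient-flow machinery on $(\Prob_2(\R),\dst)$---which is only a pseudo-metric space, with a (possibly backward) second-order constraint---issues not addressed at the formal level here. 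The rigorous substitute, and the technical heart of the paper, is the mesh-uniform semi-contractivity established for the spatial discretization, from which the statements are recovered in the limit $\delta\to0$.
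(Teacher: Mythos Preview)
The paper does not prove Proposition~\ref{prp:formal:convexity} explicitly; immediately after the statement it declares the results formal and defers their justification to the discrete Theorem~\ref{thm:contractivity}, from which they are to be recovered by a $\delta\to0$ limit. Your route---computing the geodesic Hessian $\frac{\dn^2}{\dn s^2}\fnc(\rho_s)$ directly in the continuous setting and completing the square---is therefore genuinely different. It is the continuous, Lagrangian counterpart of the Eulerian/EVI computation (Lemma~\ref{lem:lambda-contractive}, inequality~\eqref{eq:keyode}) that the paper carries out only on the lattice. Your approach has the virtue of displaying the role of the convexity hypotheses $V'',W''>0$ transparently through the nonnegative square; the paper's Eulerian setup, in contrast, transfers verbatim to $\delta\Z$, which is the paper's real objective.

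One computational slip: your claimed identity for the potential case should carry a factor $\tfrac12$ in front of the second integral, namely
\[
  \frac{\dn^2}{\dn s^2}\fnc(\rho_s)=\int_\R V''\Bigl(\px w_s-\tfrac{V'''}{V''}\,w_s\Bigr)^{2}\dd\rho_s\;-\;\frac12\int_\R\frac{\bigl[(V'')^2\bigr]''}{V''}\,w_s^2\,\dd\rho_s,
\]
since completing the square leaves the remainder $-\bigl(\tfrac{(V''')^2}{V''}+V^{IV}\bigr)w_s^2=-\tfrac12\,\tfrac{[(V'')^2]''}{V''}\,w_s^2$. The Hessian route thus actually delivers the sharper modulus $-\Lambda/2$, which a fortiori implies the stated $(-\Lambda)$-contractivity; an analogous bookkeeping remark applies to part~\ref{itm:intconvexity}. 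Your sum-of-squares identity for the porous medium case is correct and is the continuum analogue of the discrete identity in the paper's proof of Theorem~\ref{thm:contractivity}, part~\ref{lbl:contractPME}.
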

We recall that these results are \emph{formal} since we shall not prove them explicitly. They can be obtained from the corresponding results on $\lambda$-contractivity of the \emph{spatially discretized flows} in Theorem~\ref{thm:contractivity} below
by means of technically cumbersome but by now standard approximation arguments, see e.g. \cite{GigliMaas}. 

The choice of functionals in \eqref{eq:Fpotential} and \eqref{eq:Finteract} is motivated by the theory of $\lambda$-contractive flows in the $L^2$-Wasserstein metric $\wass$, see e.g. \cite[Chapter 9]{AGS}: for the Wasserstein gradient flows \eqref{eq:gfW} of the potential energy and the interaction energy, $\lambda$-contractivity has been proven \cite{CarMcCVil}. In fact, the only other known class of functionals leading to semi-contractive flows in $\wass$ is that of internal energies of the form $\fnc(\rho)=\int f(\rho)\dd x$, where $f$ satisfies McCann's convexity hypothesis \cite{McCann}. But apparently, the respective gradient flows \eqref{eq:gfformal} in $\dst$, including the DLSS equation \eqref{eq:DLSS}, are \emph{not} semi-contractive.

\subsection{Spatial discretization}
In the rigorous part of our paper, we consider a variant of the diffusive transport distance $\dst$ not on probability measures over the real line, but on the one-dimensional lattice $\delta\Z$ of a uniform mesh width $\delta>0$.
Probability densities on $\delta\Z$ are non-negative functions $\rho=(\rho_\kappa)_{\kappa\in\delta\Z}$ such that $\delta\sum_{\kappa\in\delta\Z}\rho_\kappa=1$.

In analogy to \eqref{eq:dstformal}, we introduce for two probability densities $\rho^0$ and $\rho^1$ on $\delta\Z$:
\begin{align}
  \label{eq:intro-dst}
    \dst(\rho^0,\rho^1)^2
    = \inf_{(\rho,\welo)}\set[\bigg]{\int_0^1 \delta \sum_{\kappa\in\delta\Z} \frc{(\welo^s_\kappa)}{\rho^s_\kappa} \dx s  \ \bigg|\ \dot\rho^s = \ddff \welo^s } \,,
\end{align}
where $(\rho^s)_{s\in[0,1]}$ is a curve in the space of probability densities on $\delta\Z$, and $\ddff$ is the discrete Laplacian; see Section \ref{sct:discrete} for the precise definitions.
About $\dst$, we prove:
\begin{theorem}
    $\dst$ is a geodesic pseudo-distance, that is:
    fix a probability density $\rho^*\in \Prob_2(\delta\Z)$, and consider the set $A(\rho^*)$ on which $\dst(\cdot,\rho^*)$ is finite. Then $\dst$ is a metric on $A(\rho^*)$, and for any two $\rho^0,\rho^1\in A(\rho^*)$,
    there exists a minimizer $(\rho^s,\welo^s)_{s\in[0,1]}$ for the problem \eqref{eq:intro-dst} above. 
\end{theorem}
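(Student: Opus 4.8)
The plan is to follow the now-standard Benamou--Brenier strategy for dynamical transport distances, adapted to the discrete setting, as carried out for example in the work of Maas and others on discrete optimal transport. First I would rewrite the action functional in \eqref{eq:intro-dst} in terms of the momentum variable $\welo$ rather than a velocity, so that the integrand $(s,\rho,\welo)\mapsto \delta\sum_\kappa (\welo_\kappa)^2/\rho_\kappa$ is jointly convex and lower semicontinuous (with the convention that $a^2/0 = 0$ if $a=0$ and $+\infty$ otherwise); this is the key convexity that makes the variational problem well behaved. The pairing of convexity of the action with the linearity of the constraint $\dot\rho^s = \ddff\welo^s$ is what drives the whole argument. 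I would then verify the pseudo-metric axioms: symmetry is immediate from time-reversal $s\mapsto 1-s$ (which flips the sign of $\welo$, leaving the action unchanged), non-negativity is clear, and the triangle inequality follows by concatenating two curves on $[0,1]$ after reparametrizing each to $[0,\tfrac12]$ and $[\tfrac12,1]$ and using that the action is invariant under affine reparametrization up to the correct scaling — the usual gluing argument. Finiteness being only a pseudo-metric property (two distinct densities may have infinite distance, and the "zero distance" relation need not be trivial) is precisely why the statement restricts to the sublevel/reachable set $A(\rho^*)$; on $A(\rho^*)$ the triangle inequality upgrades $\dst$ to an honest finite metric.

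For existence of minimizers, I would take a minimizing sequence $(\rho^{s,n},\welo^{s,n})$ of admissible pairs connecting $\rho^0$ to $\rho^1$ with actions converging to $\dst(\rho^0,\rho^1)^2 < \infty$. The bounded-action hypothesis gives a uniform bound on $\int_0^1 \delta\sum_\kappa (\welo^{s,n}_\kappa)^2/\rho^{s,n}_\kappa \dx s$, and since each $\rho^{s,n}$ is a probability density (hence $\rho^{s,n}_\kappa \le 1/\delta$ pointwise), Cauchy--Schwarz yields $\int_0^1 \delta\sum_\kappa |\welo^{s,n}_\kappa| \dx s \le C$, so the momenta are bounded in $L^1$ in time with values in a weighted-$\ell^1$ space of sequences on $\delta\Z$. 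Meanwhile the curves $s\mapsto \rho^{s,n}$ are equi-Hölder (in fact equi-Lipschitz into a suitable negative-order norm, or equicontinuous in a weak topology) because $\dot\rho^{s,n} = \ddff\welo^{s,n}$ and $\ddff$ is a bounded operator; combined with the compactness of the set of probability densities with finite second moment — here the second-moment control coming from membership in $\Prob_2(\delta\Z)$, which must be propagated along the curve using the bound on $\welo$, is where one uses the tightness afforded by the $\moment$-moment — one extracts via Arzelà--Ascoli a limit curve $\rho^s$ with $\rho^s \to \rho^{s,n}$ pointwise in $s$, and by weak-$*$ compactness a limit momentum $\welo^s$. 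Passing to the limit in the linear constraint is routine; lower semicontinuity of the action under this joint convergence (convexity plus l.s.c. of the integrand, via e.g. a duality/Ioffe-type argument or direct use of the jointly convex representation $\sup$ over affine functions) gives that the limit pair is admissible with action $\le \liminf$, hence a minimizer.

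The main obstacle I expect is not the abstract compactness but the \emph{control of the second moment along the minimizing sequence}: a priori nothing prevents mass from escaping to infinity on $\delta\Z$, and the action in \eqref{eq:intro-dst} does not directly penalize this. One must show that $\dmom{\rho^{s}} = \delta\sum_\kappa |\kappa|^2 \rho^s_\kappa$ stays bounded uniformly in $s$ and $n$, presumably by a Grönwall-type estimate: differentiating $\dmom{\rho^s}$ in $s$, using $\dot\rho^s = \ddff\welo^s$, a discrete integration by parts moves two discrete derivatives onto $|\kappa|^2$ (whose discrete Laplacian is bounded), leaving $\tfrac{\dn}{\dn s}\dmom{\rho^s} \lesssim \delta\sum_\kappa |\welo^s_\kappa| \lesssim \big(\delta\sum_\kappa (\welo^s_\kappa)^2/\rho^s_\kappa\big)^{1/2}$, which is integrable in $s$ by the action bound. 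Once that estimate is in place, the sublevel sets of $\dst(\cdot,\rho^*)$ are tight, $A(\rho^*)$ is a reasonable set, and the direct method closes without further difficulty; the geodesic property then follows since the minimizing curve, reparametrized to constant speed, realizes the distance on every subinterval by the gluing/optimality argument.
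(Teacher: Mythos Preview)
Your proposal is correct and follows essentially the same route as the paper: the direct method via joint convexity and lower semicontinuity of the action, the metric axioms by time-reversal and concatenation, and---crucially---the propagation of the second moment along admissible curves (the paper proves exactly the H\"older estimate $|\dmom{\rho(s_1)}-\dmom{\rho(s_2)}|\le 2\sqrt{A}\,|s_1-s_2|^{1/2}$ that your Gr\"onwall sketch anticipates). The one place where the paper is sharper than your outline is the compactness for the momentum: your $L^1$-in-space bound plus ``weak-$*$ compactness'' is loose (an $L^1$ bound alone risks a merely measure-valued limit in $s$), whereas the paper uses the same observation $\rho_\kappa\le 1/\delta$ to get the \emph{componentwise} bound $\int_0^1\welo_\kappa(s)^2\,\dd s\le \delta^{-2}\daction(\rho,\welo)$, extracts a weak $L^2([0,1])$ limit for each $\welo_\kappa$ by a diagonal argument, and then the constraint $\dot\rho_\kappa=\ddff_\kappa\welo$ upgrades each $\rho_\kappa$ to $H^1([0,1])\hookrightarrow C([0,1])$; no negative-order norms or Arzel\`a--Ascoli in a weak topology are needed. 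One small aside: your remark that ``the zero-distance relation need not be trivial'' is off---if $\dst(\rho^0,\rho^1)=0$ then any minimizer has $\welo\equiv 0$, hence $\rho^s$ is constant and $\rho^0=\rho^1$, so positive definiteness does hold on $A(\rho^*)$.
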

In full analogy to the formal Section \ref{sct:intro.formal} above, we can now list gradient flows in the spatially discrete metric, and the corresponding contractivity results.
Below, $f\ast_\delta g$ denotes the convolution of two grid functions $f,g:\delta\Z\to\R$, see \eqref{eq:dconvolve}.
\begin{enumerate}
    \item For an external potential function $V$ on $\delta\Z$,
    \begin{align}
        \label{eq:dFextern}
        \fnc(\rho) = -\delta \sum_{\kappa\in\delta\Z} V_\kappa\rho_\kappa
        \quad\leadsto\quad
        \dot\rho = \ddff (\rho\,\ddff V).
    \end{align}
    \item For an interaction potential function $W$ on $\delta\Z$,
    \begin{align}
        \label{eq:dFinteract}
        \fnc(\rho) = -\frac\delta2 \sum_{\kappa\in\delta\Z} \rho_\kappa W\ast_\delta\rho
        \quad\leadsto\quad
        \dot\rho = \ddff (\rho\,\ddff W\ast_\delta\rho).
    \end{align}
    \item Concerning the porous medium equation,
    \begin{align}
        \label{eq:dFpme}
        \fnc(\rho) = \frac{\delta^3}4\sum_{\lambda,\kappa\in\delta\Z} |\lambda-\kappa|\rho_\kappa\rho_\lambda
        \quad\leadsto\quad
        \dot\rho = \ddff(\rho^2).        
    \end{align}
\end{enumerate}
Our analytical results are as follows.
\begin{theorem}
\label{thm:contractivity}
    For the spatially discrete flows given above, we have:
    \begin{enumerate}
    \item \label{lbl:contractV}
        Assume that $\ddff V^\delta$ is positive and bounded. Provided that
        \begin{align}
           \label{eq:lambda.1}
            \Lambda = \sup \frac{\ddff\big[\big(\ddff V^\delta\big)^2\big]}{\ddff V^\delta}
        \end{align}
        is finite, then the gradient flow \eqref{eq:dFextern} is $(-\Lambda)$-contractive.
    \item \label{lbl:contractW}
        Assume that $\ddff W^\delta$ is positive and bounded. Provided that
        \begin{align}
            \label{eq:lambda.2}
            \Lambda = 2\sup|\ddff\ddff W^\delta| 
            + \sup\frac{\ddff\big[\big(\ddff W^\delta\big)^2\big]}{\ddff W^\delta}
        \end{align}
        is finite, then the gradient flow \eqref{eq:dFinteract} for $\fnc$ is $(-\Lambda)$-contractive.
    \item \label{lbl:contractPME}
        The discrete quadratic porous medium equation \eqref{eq:dFpme} is contractive.
    \end{enumerate}
\end{theorem}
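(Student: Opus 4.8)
The plan is to deduce the contraction estimate \eqref{eq:contract} from geodesic $(-\Lambda)$-convexity of the respective functional $\fnc$ with respect to $\dst$. By the theorem quoted above, any two densities in $A(\rho^\ast)$ are joined by a minimizer of \eqref{eq:intro-dst}, so that $A(\rho^\ast)$ carries the formal Riemannian structure underlying \eqref{eq:intro-dst}; in this setting, geodesic $\lambda$-convexity of $\fnc$ yields the Evolution Variational Inequality $\mathrm{EVI}_\lambda$, and hence \eqref{eq:contract}, by the gradient-flow machinery of \cite{AGS} (compare also the approximation arguments of \cite{GigliMaas}). It therefore suffices to establish, along every $\dst$-geodesic $(\rho^\sigma)_{\sigma\in[0,1]}$ lying in the interior of $A(\rho^\ast)$ (so that $\rho^\sigma>0$), the pointwise Hessian bound $\frac{\dn^2}{\dn\sigma^2}\fnc(\rho^\sigma)\ge\lambda\,|\dot\rho^\sigma|^2_{\rho^\sigma}$, where $|\dot\rho^\sigma|^2_{\rho^\sigma}=\delta\sum_{\kappa\in\delta\Z}(\welo^\sigma_\kappa)^2/\rho^\sigma_\kappa$ is the (constant) squared metric speed and $\welo^\sigma$ is the optimal flux, with $\lambda=-\Lambda$ for parts~\ref{lbl:contractV}--\ref{lbl:contractW} and $\lambda=0$ for part~\ref{lbl:contractPME}.

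First I would record the geodesic system. Writing the flux in potential form $\welo^\sigma=\rho^\sigma\,\ddff\psi^\sigma$, the first variation of the action in \eqref{eq:intro-dst} gives the continuity equation $\dot\rho=\ddff(\rho\,\ddff\psi)$ together with the discrete Hamilton--Jacobi equation $\dot\psi+\tfrac12(\ddff\psi)^2\equiv\mathrm{const}$. Differentiating once and using self-adjointness of $\ddff$ yields $\frac{\dn}{\dn\sigma}\fnc(\rho^\sigma)=\delta\sum_\kappa\rho^\sigma_\kappa\,\bigl(\ddff\fnc'(\rho^\sigma)\bigr)_\kappa\,(\ddff\psi^\sigma)_\kappa$, and here $\ddff\fnc'(\rho)$ equals the fixed grid function $-\ddff V^\delta$ in the situation of \eqref{eq:dFextern}, the linear-in-$\rho$ expression $-(\ddff W^\delta)\ast_\delta\rho$ in the situation of \eqref{eq:dFinteract}, and $-\rho$ in the situation of \eqref{eq:dFpme}. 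Differentiating a second time, eliminating $\dot\rho$ and $\dot\psi$ via the geodesic system, and then summing by parts repeatedly (self-adjointness of $\ddff$, the shifted discrete product rule) produces an algebraic identity of ``$\Gamma_2$-type'' for $\frac{\dn^2}{\dn\sigma^2}\fnc(\rho^\sigma)$ in terms of $\rho^\sigma$, $\ddff\psi^\sigma$, and $\ddff V^\delta$ (respectively $\ddff W^\delta$); this is the discrete counterpart of the computations underlying $\lambda$-contractivity of the $\wass$-gradient flows of potential and interaction energies \cite{CarMcCVil}.

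For parts~\ref{lbl:contractV} and \ref{lbl:contractW} I would split this identity into a \emph{main part}, which is manifestly non-negative since $\ddff V^\delta>0$ (respectively $\ddff W^\delta>0$) --- the lattice analogue of the square terms in Bochner's formula --- plus lower-order ``commutator'' terms that are genuine discretization artifacts. The quantities $\Lambda$ in \eqref{eq:lambda.1} and \eqref{eq:lambda.2} are calibrated precisely so that, after a Cauchy--Schwarz estimate using positivity of $\ddff V^\delta$, these commutator terms are bounded below by $-\Lambda\,\delta\sum_\kappa\rho^\sigma_\kappa(\ddff\psi^\sigma)^2_\kappa=-\Lambda\,|\dot\rho^\sigma|^2_{\rho^\sigma}$: the ratio $\ddff\bigl[(\ddff V^\delta)^2\bigr]/\ddff V^\delta$ surfaces exactly when a term of the form $\delta\sum_\kappa\bigl(\ddff[(\ddff V^\delta)^2]\bigr)_\kappa(\cdots)_\kappa$ is dominated, and in \eqref{eq:lambda.2} the additional summand $2\sup|\ddff\ddff W^\delta|$ absorbs the contribution of the $\rho$-dependence of $\ddff\fnc'(\rho)$. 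Since the resulting modulus depends on $\delta$ only through $\ddff V^\delta$ and $\ddff[(\ddff V^\delta)^2]$, for a consistent discretization of a smooth $V$ one has $\Lambda^\delta\to\sup[(V'')^2]''/V''$, so the modulus is bounded uniformly in $\delta$ --- which is the point of the theorem --- and items~\ref{itm:potconvexity} and \ref{itm:intconvexity} of Proposition~\ref{prp:formal:convexity} then follow by passing to the limit, as indicated after that statement.

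For part~\ref{lbl:contractPME} one cannot simply invoke part~\ref{lbl:contractW} with $W(z)=\tfrac14|z|$: the corresponding grid potential has $\ddff W^\delta$ equal to a rescaled Kronecker delta, so $\sup|\ddff\ddff W^\delta|\sim\delta^{-3}$ and \eqref{eq:lambda.2} diverges --- exactly the mesh-dependence this case is designed to avoid. Instead I would use the special structure $\ddff\fnc'(\rho)=-\rho$: the $\Gamma_2$-identity then collapses, and after summing by parts the commutator terms reassemble, together with the main part, into a sum of squares, namely
\[
  \frac{\dn^2}{\dn\sigma^2}\fnc(\rho^\sigma)=\tfrac32\,\delta\sum_\kappa\bigl(\ddd\welo^\sigma\bigr)_\kappa^2+\tfrac1{2\delta}\sum_\kappa\bigl(\rho^\sigma_\kappa(\ddff\psi^\sigma)_{\kappa+\delta}-\rho^\sigma_{\kappa+\delta}(\ddff\psi^\sigma)_\kappa\bigr)^2\ \ge\ 0,
\]
a discrete Dirichlet energy of the flux plus the squared discrete Wronskian of $\rho^\sigma$ and $\ddff\psi^\sigma$; this gives $0$-contractivity, uniformly in $\delta$, in contrast to the Wasserstein discretizations. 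I expect the genuine difficulty throughout to be precisely this uniform bookkeeping of the discrete commutator terms: the lattice product rule deviates from $\ddff(fg)=f\,\ddff g+2\,\ddd f\,\ddd g+g\,\ddff f$ by shift corrections that a priori carry negative powers of $\delta$, and one must verify that for parts~\ref{lbl:contractV}--\ref{lbl:contractW} they organize into the $\delta$-uniform combinations \eqref{eq:lambda.1}--\eqref{eq:lambda.2}, while for part~\ref{lbl:contractPME} they cancel against the main part in spite of the failure of \eqref{eq:lambda.2}. A secondary, more routine task is to make the reduction of the first paragraph fully rigorous --- regularity of the minimizers of \eqref{eq:intro-dst}, strict positivity along interior geodesics, and density of smooth curves --- for which the existence theorem quoted above and standard arguments suffice.
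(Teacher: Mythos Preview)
Your algebraic core is essentially the paper's: the sum-of-squares decomposition you display for the porous medium case is exactly the identity
\[
4(\welo_{\kappa+1}-\welo_\kappa)^2 - \Big(\tfrac{\welo_{\kappa+1}^2}{\rho_{\kappa+1}^2}-\tfrac{\welo_{\kappa}^2}{\rho_{\kappa}^2}\Big)(\rho_{\kappa+1}^2-\rho_\kappa^2)
= 3(\welo_{\kappa+1}-\welo_\kappa)^2 + \Big(\tfrac{\rho_\kappa}{\rho_{\kappa+1}}\welo_{\kappa+1}-\tfrac{\rho_{\kappa+1}}{\rho_\kappa}\welo_\kappa\Big)^2
\]
that the paper uses, and your treatment of the $V$ and $W$ cases via a positive quadratic form plus Cauchy--Schwarz on the commutator terms is precisely the paper's estimate \eqref{eq:elementary.1}/\eqref{eq:elementary.2}, which produces the combinations \eqref{eq:lambda.1}--\eqref{eq:lambda.2}.

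Where you and the paper genuinely diverge is the \emph{framework}. You compute the Hessian $\partial_\sigma^2\fnc(\rho^\sigma)$ along a geodesic, after writing the optimal flux in potential form $\welo=\rho\,\ddff\psi$ and invoking the discrete Hamilton--Jacobi equation. The paper instead uses the Eulerian/EVI approach of Daneri--Savar\'e (Lemma~\ref{lem:lambda-contractive}): take \emph{any} regular curve $(\rho,\welo)$ connecting the endpoints, deform it by the flow via $\rho(s,t)=\sgrp^{st}(\rho(s,0))$ with the companion equation \eqref{eq:sideode} for $\welo$, and verify the action inequality \eqref{eq:keyode}. This never requires the geodesic equations, the potential form of $\welo$, or any regularity of the actual minimizer --- only the existence of regular \emph{approximations} (Lemma~\ref{lem:smooth}) and lower semicontinuity of the action.

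That difference is not cosmetic. What you call the ``secondary, more routine task'' --- regularity and strict positivity of geodesics, and the representation $\welo^\sigma=\rho^\sigma\ddff\psi^\sigma$ with a smooth $\psi^\sigma$ satisfying the HJ equation --- is nowhere established in the paper and is not obviously standard on the infinite lattice $\delta\Z$ (positivity along geodesics, in particular, is delicate). The paper's route is chosen precisely to bypass this: the computation is carried out on the mollified curves of Lemma~\ref{lem:smooth}, for which positivity and the neighbour-ratio bound \eqref{eq:rhocompare} are built in, and the boundary terms from summation by parts are handled explicitly. Your approach would ultimately need an analogue of Lemma~\ref{lem:smooth} anyway, at which point the Daneri--Savar\'e machinery becomes the more economical packaging. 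So: right algebra, different scaffolding, and the scaffolding you deferred is the part the paper actually works for.
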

These results are by no means obvious, and even given the contractivity rates in Proposition \ref{prp:formal:convexity} for the examples \eqref{eq:Fpotential}--\eqref{eq:Fpme}, there is a priori no reason to expect the same rates for their discretizations \eqref{eq:dFextern}--\eqref{eq:dFpme}.
It is a general observation that uniform geodesic semi-convexity is fragile with respect to spatial discretization. A comparable situation is that of the linear Fokker-Planck and the quadratic porous medium equation as gradient flows in the $L^2$-Wasserstein metric $\wass$: it has been shown in \cite{MielkeMC} that contractivity of the linear Fokker-Planck equation is preserved for a very carefully designed discretization of the gradient flow structure, see also \cite{Maas2011,Chow,ErbarMaas2012,EFS19}, or \cite{MaasMatthes} for a generalization to a nonlinear fourth order equation; in contrast, for discretization of the quadratic porous medium equation from \eqref{eq:Fpme} with an analogous approach, the modulus of $\lambda$-contractivity diverges to $-\infty$ as the lattice spacing vanishes \cite{ErbarMaas}.

\subsection{Outlook} %
We finish with giving two directions of future research:
\begin{enumerate}
    \item \emph{Combining diffusive transport with $L^2$-Wasserstein:} Gradient flows in such a combined metric will take the form, with some parameter $\eps>0$,
    \begin{align*}
    \partial_t\rho = -\pxx\big(\rho\,\pxx\fnc'(\rho)\big) + \eps\,\partial_x\big(\rho\,\partial_x\fnc'(\rho)\big).
    \end{align*}
    Choosing in particular for $\fnc$ a relative entropy for the square potential,
    \begin{align}\label{eq:relent}
        \fnc(\rho) =\intr \left(\rho\log\rho + \frac\eps 2|x|^2\rho\right)\dd x,
    \end{align}
    we obtain the following re-scaled DLSS equation,
    \begin{align}\label{eq:weirddlss}
        \partial_t\rho
        = -\pxx\big(\rho\,(\eps+\pxx\log\rho)\big) + \eps\,\partial_x\big(\rho(\eps x+\partial_x\log\rho)\big)
        = -\pxx(\rho\,\pxx\log\rho) + \eps^2\,\partial_x(x\rho) \,.
    \end{align}
    A natural question is if the uniform geodesic convexity of this functional \eqref{eq:relent} in plain $L^2$-Wasserstein $\wass$ suffices to make \eqref{eq:weirddlss} contractive in the combined metric. 
    \item \emph{Generalization to other mobilities:} Having in mind e.g. applications in econophysics \cite{Cohen2024}, alternative definitions of the diffusive distance, with mobilities different from the linear one, can be considered, leading to a different notion of contractivity. Actually, it is already interesting to study the effect of alternative discretizations for the linear mobility: for our discretization of the DLSS equation \eqref{eq:DLSS} in \cite{MRSS}, the choice of the discrete mobility --- which is much more sophisticated than a point evaluation of the density --- has been of crucial importance for structure preservation.

\end{enumerate}

\subsection{Plan of the paper}
In the formal part, Section \ref{sct:formal} below, we start with the continuous theory, i.e., the distance $\dst$ on $\Prob_2(\R)$ from \eqref{eq:dstformal} --- dual formulation, geodesics, relation to Martingale optimal transport etc --- and the corresponding gradient flows \eqref{eq:gfformal}. %
In the rigorous part, Sections \ref{sct:discrete} and \ref{sct:lambdacontract}, we first define the distance $\dst$ on the discretized space $\Prob_2(\delta\Z)$, we derive a sufficient criterion for measures being at finite distance, and we prove the existence of geodesics. Then we prove Theorem~\ref{thm:contractivity} about the mesh-uniform contractivity of the discretized equations.

\section{The diffusive transport metric: a formal overview}
\label{sct:formal}

\subsection{Definition and first properties}
The dynamical formulation \eqref{eq:dstformal} can be recast as a convex minimization problem with a linear constraint on the space of measures.
\begin{definition}
    The space of centered probability measures of finite second moment is  
    \[ \Prob_{2,c}(\R)=\set{\mu \in \Prob_2(\R): \int x \dx \mu(x)=0}. \]
    For $\mu_0,\mu_1\in\Prob_{2,c}(\R)$, define their \emph{diffusive transport distance} $\dst(\rho_0,\rho_1)\in[0,\infty]$ by
    \begin{align}
        \label{eq:newBB}
        \dst(\mu_0,\mu_1)^2 
        = \inf \left\{ \int_0^1\intr \left|\frac{\dn\omega_s}{\dn\mu_s}\right|^2\dd\mu_s\dd s\ \middle|\ \partial_s\mu_s=\pxx\omega_s \right\},
    \end{align}
    where the infimum runs over all weakly continuous curves  $(\mu_s)_{s\in[0,1]}$ of probability measures $\mu_s\in\Prob_{2,c}(\R)$ connecting $\mu_0$ to $\mu_1$, and all measurable curves $(\omega_s)_{s\in[0,1]}$ of Radon measures $\omega_s$ that satisfy the second order continuity equation $\partial_s\mu_s=\pxx\omega_s$ in the distributional sense. The convention is that the spatial integral is $+\infty$ for those $s\in[0,1]$ at which $\omega_s$ is not absolutely continuous with respect to $\rho_s$.
\end{definition}
Restricting $\dst$'s definition to measures $\mu_0,\mu_1\in\Prob_{2,c}(\R)$, i.e., of vanishing first moment and finite second moment, is natural since the infimum in \eqref{eq:newBB} is expected to be infinity if only one of $\mu_0$ and $\mu_1$ has finite second moment, or if their centers of mass differ. We conjecture that $\dst$ is a metric on $\Prob_{2,c}(\R)$. Currently, we cannot prove that $\dst$ is finite on all of $\Prob_{2,c}(\R)$; some sufficient criterion for finiteness is given later in Section \ref{sct:MartingaleOT}. In any case, conditionally on finiteness, $\dst$ satisfies the axioms of a metric:
\begin{proposition}
  For $\mu_0,\mu_1\in\Prob_2(\R)$ with $\dst(\mu_0,\mu_1)<\infty$ the infimum in \eqref{eq:newBB} is attained. Moreover, $\dst$ is a (pseudo)-metric $\Prob_2(\R)$ and satisfies for any $\mu_0,\mu_1,\mu_2\in\Prob_{2,c}(\R)$ with $\dst(\mu_1,\mu_0)<\infty$ and $\dst(\mu_1,\mu_2)<\infty$:
  \begin{enumerate}[ (i) ]
  \item \emph{positivity:}
  $\dst(\mu_0,\mu_1)\geq 0$ and $\dst(\mu_0,\mu_1)= 0$ only if $\mu_0=\mu_1$;
  \item \emph{symmetry:}
  $\dst(\mu_1,\mu_0) = \dst(\mu_0,\mu_1)$;
  \item \emph{triangle inequality:}  
    $\dst(\mu_0,\mu_2) \leq \dst(\mu_0,\mu_1)+\dst(\mu_1,\mu_2)$.
  \end{enumerate}
\end{proposition}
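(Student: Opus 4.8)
The plan is to establish the attainment of the infimum first, since the three metric axioms then follow from soft scaling arguments. Write $\mathcal{A}(\mu,\omega) = \int_0^1\intr \bigl|\tfrac{\dn\omega_s}{\dn\mu_s}\bigr|^2\dd\mu_s\dd s$ for the action in \eqref{eq:newBB}; its integrand is the value of the perspective function $\Phi(a,b)=|b|^2/a$ (for $a>0$; $\Phi(0,0)=0$; $\Phi=+\infty$ otherwise), which is convex, lower semicontinuous and positively $1$-homogeneous on $\Rnn\times\R$. Along any admissible pair, testing the constraint $\partial_s\mu_s=\pxx\omega_s$ against $1$, $x$, $x^2$ gives $\tfrac{\dn}{\dn s}\mu_s(\R)=0$, $\tfrac{\dn}{\dn s}\intr x\dd\mu_s=0$ and $\tfrac{\dn}{\dn s}\intr x^2\dd\mu_s=2\,\omega_s(\R)$; since $\mu_s$ is a probability measure, Cauchy--Schwarz yields $|\omega_s(\R)|\le\|\omega_s\|_{\mathrm{TV}}\le\bigl(\intr|\tfrac{\dn\omega_s}{\dn\mu_s}|^2\dd\mu_s\bigr)^{1/2}$, hence $\intr x^2\dd\mu_s\le\intr x^2\dd\mu_0+2\sqrt{\mathcal{A}(\mu,\omega)}$ uniformly in $s$, the centres of mass of $\mu_0$ and $\mu_1$ coincide, and testing against $\varphi\in C^2(\R)$ produces the Hölder estimate $|\intr\varphi\dd\mu_s-\intr\varphi\dd\mu_t|\le\|\pxx\varphi\|_\infty\sqrt{|t-s|}\,\sqrt{\mathcal{A}(\mu,\omega)}$. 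For a minimizing sequence $(\mu^n,\omega^n)$ these bounds give uniform tightness and equicontinuity of $(\mu^n_s)_s$, so along a subsequence $\mu^n_s\rightharpoonup\mu_s$ for every $s$, with $s\mapsto\mu_s$ weakly continuous and valued in $\Prob_{2,c}(\R)$, while the space--time measures $\omega^n_s\dd s$ on $[0,1]\times\R$ converge weakly-$*$ to $\omega=\omega_s\dd s$. The constraint survives in the limit by linearity, and joint convexity and $1$-homogeneity of $\Phi$ give, via the standard Ioffe--Reshetnyak lower semicontinuity theorem for such integral functionals, $\mathcal{A}(\mu,\omega)\le\liminf_n\mathcal{A}(\mu^n,\omega^n)=\dst(\mu_0,\mu_1)^2$, so $(\mu,\omega)$ is optimal.

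Nonnegativity of $\dst$ is immediate. If $\dst(\mu_0,\mu_1)=0$, then a minimizer has $\intr|\tfrac{\dn\omega_s}{\dn\mu_s}|^2\dd\mu_s=0$ for a.e.\ $s$, hence $\omega_s=0$ for a.e.\ $s$, so $\partial_s\mu_s=0$ distributionally and the weakly continuous curve $\mu_s$ is constant, whence $\mu_0=\mu_1$; this is (i). For (ii), if $(\mu_s,\omega_s)$ connects $\mu_0$ to $\mu_1$, then $(\mu_{1-s},-\omega_{1-s})$ connects $\mu_1$ to $\mu_0$ with equal action, because $\partial_s\mu_{1-s}=-\pxx\omega_{1-s}=\pxx(-\omega_{1-s})$ and $\Phi$ is even in its second argument; this gives $\dst(\mu_1,\mu_0)=\dst(\mu_0,\mu_1)$. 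For (iii), fix $\theta\in(0,1)$ and admissible pairs connecting $\mu_0$ to $\mu_1$ and $\mu_1$ to $\mu_2$; gluing the first, reparametrized onto $[0,\theta]$, to the second, reparametrized onto $[\theta,1]$ — with momentum measures rescaled by $1/\theta$ resp.\ $1/(1-\theta)$ — produces a weakly continuous admissible curve from $\mu_0$ to $\mu_2$ (the interface boundary terms at $s=\theta$ cancel since both pieces equal $\mu_1$ there), whose action, by the change of variables $r=s/\theta$ resp.\ $r=(s-\theta)/(1-\theta)$, equals $\tfrac1\theta\mathcal{A}_1+\tfrac1{1-\theta}\mathcal{A}_2$ where $\mathcal{A}_1,\mathcal{A}_2$ are the actions of the two given pairs. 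Taking infima and then minimizing over $\theta$ — the optimal choice being $\theta=\dst(\mu_0,\mu_1)/\bigl(\dst(\mu_0,\mu_1)+\dst(\mu_1,\mu_2)\bigr)$, with the degenerate cases handled by a limiting argument — yields the triangle inequality.

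The analytic heart is the attainment step. The delicate points there are the disintegration of the weak-$*$ limit $\omega$ in the time variable and the identification of the instantaneous momenta $\omega_s$; the verification that the limit curve takes the prescribed boundary values at $s=0,1$ (continuity up to the endpoints, supplied by the Hölder bound); and, above all, the lower semicontinuity of $\mathcal{A}$ under \emph{weak-$*$} rather than narrow convergence of the momentum part, for which the uniform second-moment bound — hence tightness in the $x$ variable — is precisely what prevents loss of mass at infinity. Everything else is a routine adaptation of the Benamou--Brenier machinery to the second-order constraint $\partial_s\mu_s=\pxx\omega_s$.
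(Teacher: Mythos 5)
Your proposal is correct and follows essentially the same route as the paper's (intentionally sketched) proof: joint convexity and one-homogeneity of the integrand plus linearity of the second-order continuity equation give lower semicontinuity, the second-moment/Hölder bounds derived by testing against $1,x,x^2,\varphi$ give compactness for the direct method, and the three metric axioms are handled by the same time-reversal and gluing constructions with the same optimal interface parameter $\theta=\dst(\mu_0,\mu_1)/(\dst(\mu_0,\mu_1)+\dst(\mu_1,\mu_2))$. You simply spell out the compactness/lower-semicontinuity details (tightness, equicontinuity, Ioffe--Reshetnyak, disintegration of the limit $\omega$) that the paper defers to the references.
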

\begin{proof}
  Below we summarize the key ingredients for the proof. To turn this sketch into a rigorous proof, one can proceed in analogy to \cite{DNS}.
  
  The integral functional in \eqref{eq:newBB} is convex, since the quotient $w^2/r$ --- read as zero for $w=0$, and as $+\infty$ for $\rho=0$ and $w\neq0$ --- is jointly convex in the pair $(w,r)\in\R\times\Rnn$. The PDE constraint is linear. Thus, see e.g. \cite{AmbrosioButtazzo88}, the minimization problem \eqref{eq:newBB} is sequentially lower semi-continuous with respect to weak convergence of $(\mu,\omega)$ as measures on $[0,1]\times\R$. If $(\mu_s,\omega_s)_{s\in[0,1]}$ is an admissible curve of finite integral value, then the second moment of $\mu_s$ is $s$-uniformly controlled, due to the following estimate for arbitrary $0\le s_1\le s_2\le 1$:
  \begin{align*}
    &\frac12 \left|\intr|x|^2\dd\mu_{s_1} - \intr|x|^2\dd\mu_{s_0}\right|
    =  \frac12\left|\int_{s_0}^{s_1}\intr |x|^2\pxx\omega_s\dd x\dd s\right|
    = \left|\int_{s_0}^{s_1}\omega_s(\R)\dd s\right| \\
    & \qquad \le \left(\int_{s_0}^{s_1}\intr\left|\frac{\dn\omega_s}{\dn\mu_s}\right|^2\dd\mu_s\dd s\right)^{1/2}\left(\int_{s_0}^{s_1}\mu_s(\R)\dd s\right)^{1/2}
    \le \int_0^1\intr \left|\frac{\dn\omega_s}{\dn\mu_s}\right|^2\dd\mu_s\dd s.
  \end{align*}
  This proves weak sequential compactness of $\mu^k$ --- and a posteriori also of $\omega^k$ --- as measure on $[0,1]\times\R$, for any minimizing sequence $(\mu^k,\omega^k)$ in \eqref{eq:newBB}.
  In conclusion, if there is at least one admissible $(\mu_s,\omega_s)_{s\in[0,1]}$ with finite action, the direct methods from the calculus of variations yield the existence of a minimizer.

\smallskip
\noindent
  (i) \emph{Positivity:} The non-negativity is immediate from the definition. If $(\mu_s,\omega_s)_{s\in[0,1]}$ attains the infimum for $0=\dst(\rho_0,\rho_1)$ in \eqref{eq:newBB}, then it follows that $\mu$-a.e., $\dn\omega/\dn\mu$ vanishes. This means that $\omega_s$ is the zero measure, and so $\mu_s$ is independent of $s$, in particular $\mu_0=\mu_1$. 
  
  \smallskip
  \noindent
  (ii) \emph{Symmetry:} if $(\mu_s,\omega_s)_{s\in[0,1]}$ is an admissible pair connecting $\mu_0$ to $\mu_1$, then $(\mu_{1-s},-\omega_{1-s})_{s\in[0,1]}$ is admissible connecting $\mu_1$ to $\mu_0$, and vice versa, and the integral values are identical.
  
  \smallskip
  \noindent
  (iii) \emph{Triangle inequality:} let $(\hat\mu_s,\hat\omega_s)_{s\in[0,1]}$ realize the infimum for $\dst(\rho_0,\rho_1)$ in \eqref{eq:newBB}, and let $(\check\mu_s,\check\omega_s)_{s\in[0,1]}$ realize the infimum for $\dst(\rho_1,\rho_2)$. Define the parameter
  \begin{align*}
    \sigma := \frac{\dst(\mu_0,\mu_1)}{\dst(\mu_0,\mu_1)+\dst(\mu_1,\mu_2)},
  \end{align*}
  and then define the pair $(\tilde\mu_s,\tilde\omega_s)_{s\in[0,1]}$ by
  \begin{align*}
    \tilde\mu_s =
    \begin{cases}
      \hat\mu_{\frac s\sigma} & \text{for $0\le s\le\sigma$}, \\
      \check\mu_{\frac{s-\sigma}{1-\sigma}} & \text{for $\sigma\le s\le1$},
    \end{cases}
                                          \quad
                                          \tilde\omega_s =
                                          \begin{cases}
                                            \sigma^{-1}\hat\omega_{\frac s\sigma} & \text{for $0\le s<\sigma$}, \\
                                            (1-\sigma)^{-1}\check\omega_{\frac{s-\sigma}{1-\sigma}} & \text{for $\sigma\le s\le1$}.
                                          \end{cases}
  \end{align*}
  By means of the chain rule, it is easily verified that this pair $(\tilde\mu_s,\tilde\omega_s)_{s\in[0,1]}$ is admissible in the minimization problem \eqref{eq:newBB} for $\dst(\rho_0,\rho_2)$, and that
  \begin{equation*}
    \int_0^1\intr \left|\frac{\dn\tilde\omega_s}{\dn\tilde\mu_s}\right|^2\dd\tilde\mu_s\dd s
    = \left(\dst(\mu_0,\mu_1)+\dst(\mu_1,\mu_2)\right)^2. \qedhere
  \end{equation*}
\end{proof}

\subsection{Geodesics and dual formulation}
A geodesic between $\mu_0,\mu_1\in\Prob_{2,c}(\R)$ is a curve $(\mu_s)_{s\in[0,1]}$ such that $\dst(\mu_s,\mu_0)=s\,\dst(\mu_1,\mu_0)$ and $\dst(\mu_1,\mu_s)=(1-s)\,\dst(\mu_1,\mu_0)$ for all $s\in[0,1]$. Equivalently, $(\mu_s)_{s\in[0,1]}$ is a geodesic if there is an accompanying $(\omega_s)_{s\in[0,1]}$ such that $(\mu_s,\omega_s)_{s\in[0,1]}$ is a minimizer in \eqref{eq:newBB}. For the derivation of geodesic equations, we consider the dual problem for \eqref{eq:newBB}.
\begin{proposition}\label{prop:dual}
  For $\mu_0,\mu_1\in\Prob_2(\R)$, it holds
  \begin{align}
    \label{eq:newBBdual}
    \dst(\mu_0,\mu_1)^2 
    = \sup \left\{
    \intr \varphi_1\dd\mu_1 - \intr\varphi_0\dd\mu_0\ \middle|\ 
    \partial_s\varphi_s +  \frac12\big(\pxx\varphi_s\big)^2 \leq 0 \right\},
  \end{align}
  where the supremum runs over all bounded functions $\varphi\in C^2([0,1]\times\R)$.
\end{proposition}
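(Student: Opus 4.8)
The plan is to establish the two inequalities in \eqref{eq:newBBdual} separately, following the standard route for Benamou--Brenier type duality and, as announced for the preceding proposition, proceeding in analogy to \cite{DNS}.

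\emph{Weak duality} (``$\ge$'' in \eqref{eq:newBBdual}). Fix a bounded $\varphi\in C^2([0,1]\times\R)$ with $\partial_s\varphi_s+\tfrac12(\pxx\varphi_s)^2\le0$, and let $(\mu_s,\omega_s)_{s\in[0,1]}$ be admissible in \eqref{eq:newBB} with finite action (if no such pair exists, then $\dst(\mu_0,\mu_1)=\infty$ and there is nothing to prove). Finiteness of the action forces $\omega_s=w_s\mu_s$ with $\int_0^1\intr w_s^2\dd\mu_s\dd s<\infty$, hence $\int_0^1|\omega_s|(\R)\dd s<\infty$ by Cauchy--Schwarz. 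Testing the distributional identity $\partial_s\mu_s=\pxx\omega_s$ against $\varphi_s$, and using the a priori bound on the second moment of $\mu_s$ established above to justify the decay at infinity in the integrations by parts, one finds that $s\mapsto\intr\varphi_s\dd\mu_s$ is absolutely continuous with
\begin{equation*}
  \frac{\dn}{\dn s}\intr\varphi_s\dd\mu_s
  = \intr\partial_s\varphi_s\dd\mu_s + \intr w_s\,\pxx\varphi_s\dd\mu_s .
\end{equation*}
Combining the constraint on $\varphi$ with Young's inequality applied pointwise to the last term, and integrating over $s\in[0,1]$, gives $\intr\varphi_1\dd\mu_1-\intr\varphi_0\dd\mu_0\le\int_0^1\intr w_s^2\dd\mu_s\dd s$. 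Taking the supremum over admissible $\varphi$ and then the infimum over admissible $(\mu_s,\omega_s)$ yields the claimed inequality.

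\emph{Absence of a duality gap} (``$\le$'' in \eqref{eq:newBBdual}). This is the substantial direction and relies on the Fenchel--Rockafellar theorem. One first recasts the primal: by Legendre duality of the Benamou--Brenier integrand, the action in \eqref{eq:newBB} coincides with
\begin{equation*}
  \mathcal A(\mu,\omega) := \sup\set[\Big]{ \int_{[0,1]\times\R} (a\dd\mu + b\dd\omega) \,:\, (a,b)\in C_b([0,1]\times\R;\R^2),\ a + \tfrac12 b^2 \le 0 } ,
\end{equation*}
which exhibits $\mathcal A$ as a convex, positively $1$-homogeneous, weakly lower semicontinuous functional on pairs of measures $(\mu,\omega)$ on $[0,1]\times\R$ (cf.\ \cite{AmbrosioButtazzo88}). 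Minimizing $\mathcal A$ over weakly continuous curves in $\Prob_{2,c}(\R)$ subject to the linear constraints $\partial_s\mu_s=\pxx\omega_s$ and the prescribed endpoints $\mu_0,\mu_1$, one treats these constraints as a perturbation parameter and computes the dual functional: the very constraint $a+\tfrac12 b^2\le0$ appearing in $\mathcal A$ makes the relevant Fenchel conjugate the indicator of the Hamilton--Jacobi subsolution set, so the dual problem is the maximization of $\intr\varphi_1\dd\mu_1-\intr\varphi_0\dd\mu_0$ over multipliers $\varphi$ with $\partial_s\varphi_s+\tfrac12(\pxx\varphi_s)^2\le0$. A final mollification/density step upgrades the a priori only measurable-in-time multiplier to a bounded $C^2$ competitor without decreasing the objective, and produces \eqref{eq:newBBdual}.

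\emph{Main obstacle.} The delicate point is the constraint qualification ensuring that Fenchel--Rockafellar yields no duality gap: whenever $\dst(\mu_0,\mu_1)<\infty$ one must exhibit an admissible connecting curve of finite action around which the constraints can be perturbed continuously --- a ``regular'' competitor --- together with the tightness and compactness estimates that go with it. This is precisely the technical heart of \cite{DNS} in the Wasserstein case, here further complicated by the unboundedness of $\R$, which forces one to work in the second-moment--weighted topology, and by the possibility that $\dst=+\infty$ on parts of $\Prob_{2,c}(\R)$, so that the no-gap identity is really established on the sublevel sets where $\dst$ is finite (the case $\dst=+\infty$ being absorbed into the Fenchel--Rockafellar equality, which permits both sides of \eqref{eq:newBBdual} to be infinite). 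Since the present section is only formal --- the analogous duality being proved in full rigor for the lattice model later in the paper --- we confine ourselves to this outline.
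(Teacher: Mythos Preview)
Your sketch is sound and follows the systematic convex-duality route: you split \eqref{eq:newBBdual} into weak duality (direct, by testing the second-order continuity equation against $\varphi$ and applying Young's inequality) and absence of a duality gap (Fenchel--Rockafellar, after expressing the action via the Legendre representation of $(r,p)\mapsto p^2/r$). The paper instead takes a shorter, more heuristic saddle-point path: it introduces $\varphi$ as a Lagrange multiplier for the constraint $\partial_s\mu_s=\pxx\omega_s$, writes $\tfrac12\dst^2$ as an $\inf\sup$, invokes von Neumann's minimax theorem to interchange infimum and supremum (justified by joint convexity in $(\mu,\omega)$ and linearity in $\varphi$), and then reads off the Hamilton--Jacobi-type constraint $\partial_s\varphi_s+\tfrac12(\pxx\varphi_s)^2\le0$ from the first-order optimality conditions in $\omega$ and $\mu$. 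Your decomposition has the merit of isolating the easy direction cleanly and of naming the precise tool (Fenchel--Rockafellar) and the constraint-qualification issue that would have to be checked in a rigorous treatment; the paper's minimax argument is more compact but leaves the justification of the $\inf$/$\sup$ swap entirely formal. Since this section is explicitly formal, both sketches are on equal footing. One small inaccuracy in your closing remark: the discrete part of the paper does \emph{not} prove a dual formulation for the lattice metric --- it establishes existence of geodesics and the semi-contractivity results, but no analogue of \eqref{eq:newBBdual}.
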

\begin{proof}[Sketch of proof]
  We rewrite the definition \eqref{eq:newBB} as a saddle point problem,
  \begin{align*}
    \frac12\dst(\mu_0,\mu_1)^2
    = \inf_{(\mu,\omega)}\sup_{\varphi}\left[\frac12\int_0^1\intr \left|\frac{\dn\omega_s}{\dn\mu_s}\right|^2\dd\mu_s\dd s 
    + \int_0^1\intr \varphi_s\big(\partial_s\mu_s+\pxx\omega_s\big)\dd x\dd s\right].
  \end{align*}
  Rewriting the PDE using integration by parts yields
  \begin{equation}
    \label{eq:formal.saddle}
    \begin{split}
    \frac12\dst(\mu_0,\mu_1)^2
    = \inf_{(\mu,\omega)}\sup_{\varphi}&\ \bigg[\intr \varphi_1\dd\mu_1 - \intr\varphi_0\dd\mu_0 \\
      &+ \int_0^1\intr\left(\frac12\left|\frac{\dn\omega_s}{\dn\mu_s}\right|^2-\partial_s\varphi_s\right)\dd\mu_s\dd s + \int_0^1\intr\omega_s\pxx\varphi_s\dd x\dd s\bigg].      
    \end{split}
  \end{equation}
  The integrand is convex in $(\mu_s,\omega_s)_{s\in[0,1]}$, and is concave in $\varphi$, hence von Neumann's minimax theorem allows interchanging infimum and supremum. The necessary conditions for a minimum with respect to $\omega_s$ and with respect to $\mu_s$ imply
  \begin{align*}
    0=\omega_s + (\pxx\varphi_s)\,\mu_s,
    \quad
    0\le -\frac12\left|\frac{\dn\omega_s}{\dn\mu_s}\right|^2-\partial_s\varphi_s,
  \end{align*}
  with equality on the support of $\mu_s$. The inequality results from the constraint $\mu_s\ge0$. In combination, these condition imply that
  \begin{align*}
    \partial_s\varphi_s + \frac12(\pxx\varphi_s)^2 \leq 0 ,
  \end{align*}
  with equality on $\mu_s$'s support. And a substitution into~\eqref{eq:formal.saddle} eliminates all $s$-integrals, reducing it to~\eqref{eq:newBBdual}.
\end{proof}
\begin{remark}\label{rem:viscosity}
	The duality result of Proposition~\ref{prop:dual} is analogous to~\cite[Theorem 4.3]{HuesmannTrevisan2019} for the martingale optimal transport problem. For a justification of sufficiency of the one-sided inequality for $\varphi$ without equality constraint, see~\cite[Remark 4.5]{HuesmannTrevisan2019}.
\end{remark}
Assuming strong duality of \eqref{eq:newBB} with \eqref{eq:newBBdual}, any geodesic $(\mu_s)_{s\in[0,1]}$ possesses a corresponding maximizer $(\varphi_s)_{s\in[0,1]}$, satisfying the inequality constraint with equality $\mu$-a.e. This implies the following system of geodesic equations:
\begin{align}
    \label{eq:geodesics}
    \partial_s\mu_s = \pxx(\mu_s\pxx\varphi_s),
    \quad
    \partial_s\varphi_s = -\frac12\big(\pxx\varphi_s\big)^2.
\end{align}
\begin{example}
    There is a class of explicit geodesics, generated by the heat semigroup. If $(\rho_s)_{s\in[0,1]}$ is a distributional solution to the (possibly time-reversed) linear heat equation $\partial_s\rho=\alpha\pxx\rho$ with some $\alpha\in\R$, then $(\mu_s)_{s\in[0,1]}$ is a $\dst$-geodesic connecting $\mu_0$ to $\mu_1$, and
    \begin{align*}
        \dst(\rho_0,\rho_1)^2 = \int_0^1 \intr \frc{(\alpha\rho_s)}{\rho_s} \dd x\dd t =  \alpha^2.
    \end{align*}
    A corresponding $(\varphi_s)_{s\in[0,1]}$ satisfying the geodesic system \eqref{eq:geodesics} is given by $\varphi_s(x)=\frac\alpha2|x|^2-\frac{\alpha^2}2t$; note that the time-dependent constant term is irrelevant for the geodesic.

    There is an intuitive analogy to optimal mass transport: the heat semigroup plays the same role for the new distance $\dst$ as the translation group plays for the $L^2$-Wasserstein distance $\wass$.
\end{example}
The geodesic equations \eqref{eq:geodesics} motivate --- in analogy to the celebrated ``Otto calculus'' --- the introduction of a formal Riemannian structure: let $(\rho_t)_{|t|<\eps}$ be a curve with derivative $\zeta:=\partial_t\rho_0$ at $t=0$. Then the corresponding tangent vector is associated with the solution $\varphi$ to the following fourth order elliptic equation,
\begin{align*}
    \pxx(\rho_0\,\pxx\varphi) = \zeta,
\end{align*}
and the norm of that vector in the corresponding tangent space amounts to
\begin{align}
    \label{eq:Riemann}
    \|\zeta\|_{\rho_0}^2 = \intr (\pxx\varphi)^2\dd\rho_0.
\end{align}
Polarization of this norm yields formally a scalar product on the tangent bundle to $\Prob_{2,c}(\R)$. The corresponding gradient of a functional $\fnc$ is then given by minus the right-hand side of the PDE \eqref{eq:gfformal} from the introduction, justifying a posteriori to consider it as the gradient flow equation for $\fnc$ with respect to $\dst$.

\subsection{Comparison with the Martingale optimal transport metric}
\label{sct:MartingaleOT}
For two probability measures $\mu_0,\mu_1\in\Prob_2(\R)$, their \emph{Martingale optimal transport distance} $T_{\Mart}(\mu_0,\mu_1)$ is defined via the following dynamic formulation, see e.g.~\cite{HuesmannTrevisan2019},
\begin{align*}
  T_{\Mart}(\mu_0,\mu_1)^2
  = \inf_{(\mu,\omega)} \left\{ \int_0^1 \intr \left|\frac{\dn\omega_s}{\dn\mu_s}\right|^2 \dd\mu_s\dd s \,\middle|\,  \partial_s \mu_s = \pxx\omega_s,\ \omega_s\geq 0\right\} .
\end{align*}
The difference to definition \eqref{eq:newBB} of $\dst$ is the additional constraint $\omega_s\ge0$. Introducing $a:=\dn\omega_s/\dn\mu_s$, this means that $\mu_s$ satisfies the (possibly degenerate) \emph{parabolic} equation $\partial_s\mu_s=\pxx(a_s\mu_s)$. A trivial consequence is:
\begin{lemma}\label{lem:D:Mart}
  If $\mu_0,\mu_1\in\Prob_2(\R)$ have finite Martingale optimal transport distance $T_{\Mart}(\mu_0,\mu_1)<\infty$, then also their diffusive transport distance $\dst(\mu_0,\mu_1)$ is finite, and
  \begin{align*}
    \dst(\mu_0,\mu_1) \le T_{\Mart}(\mu_0,\mu_1).
  \end{align*}
\end{lemma}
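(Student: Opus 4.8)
The plan is to exploit the fact that the two variational problems defining $\dst$ and $T_{\Mart}$ are literally the same infimum, except that the feasible set for $T_{\Mart}$ is a \emph{subset} of the feasible set for $\dst$: every admissible pair $(\mu_s,\omega_s)_{s\in[0,1]}$ for $T_{\Mart}$ (i.e.\ one satisfying $\partial_s\mu_s=\pxx\omega_s$ \emph{and} $\omega_s\ge 0$) is in particular an admissible pair for $\dst$ (where the sign constraint is simply dropped). Since the objective functional $\int_0^1\!\int_\R |\dn\omega_s/\dn\mu_s|^2\dd\mu_s\dd s$ is identical in both problems, taking the infimum over the larger set can only decrease (or leave unchanged) the value.

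Concretely, I would argue as follows. First, if $T_{\Mart}(\mu_0,\mu_1)<\infty$, then there exists at least one admissible competitor $(\mu_s,\omega_s)_{s\in[0,1]}$ in the $T_{\Mart}$-problem with finite action; actually, since the infimum is finite we may pick a minimizing sequence, but a single competitor with action arbitrarily close to $T_{\Mart}(\mu_0,\mu_1)^2$ suffices. Second, observe that this same pair is admissible in the $\dst$-problem \eqref{eq:newBB}: the PDE constraint $\partial_s\mu_s=\pxx\omega_s$ is required verbatim, the endpoint conditions are the same, and $\dst$ imposes \emph{no} sign condition on $\omega_s$, so the extra hypothesis $\omega_s\ge 0$ is only helpful, never an obstruction. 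Third, the value of the objective on this competitor is the same number in both problems, so $\dst(\mu_0,\mu_1)^2\le \int_0^1\!\int_\R|\dn\omega_s/\dn\mu_s|^2\dd\mu_s\dd s$; letting the competitor approach optimality for $T_{\Mart}$ gives $\dst(\mu_0,\mu_1)^2\le T_{\Mart}(\mu_0,\mu_1)^2<\infty$, hence also the finiteness claim, and taking square roots finishes the proof.

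There is essentially no obstacle here — this is the "trivial consequence" the text already announces, and the proof is a one-line monotonicity-of-infimum-over-nested-feasible-sets argument. The only points worth a sentence of care are: (a) making sure the notion of admissible curve is genuinely the same (weakly continuous $(\mu_s)$, measurable $(\omega_s)$, distributional PDE) so that the inclusion of feasible sets is literally correct — which it is, by inspection of the two displayed definitions; and (b) the (minor) subtlety that if $T_{\Mart}(\mu_0,\mu_1)<\infty$ one should check the infimum is actually realized by some admissible pair of finite action, or else just work with a minimizing sequence, to avoid an empty feasible set. Neither issue requires real work. I would therefore write the proof in two or three sentences, exactly at the level of "any admissible pair for $T_{\Mart}$ is admissible for $\dst$ with the same action, so the infimum defining $\dst$ is no larger."
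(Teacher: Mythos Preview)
Your argument is correct and is exactly the ``trivial consequence'' the paper has in mind: the feasible set for $T_{\Mart}$ is contained in that for $\dst$ with the same objective, so the infimum over the larger set is no larger. The paper does not even write out a proof for this lemma, so your two-to-three-sentence version is already more detailed than what appears there.
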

Recall that two measures $\mu_0,\mu_1\in \Prob(\R)$ are in convex order, $\mu_0\leqcx \mu_1$, if $\intr \varphi \dx\mu_0 \leq \intr \varphi \dx\mu_1$ for all convex functions $\varphi: \R\to \R$.
\begin{corollary}
    If $\mu_0,\mu_1\in\Prob_2(\R)$ are in convex order and $\int \abs{x}^q \dx\mu_1<\infty$ for some $q>4$, then $\dst(\mu_0,\mu_1)<\infty$.
\end{corollary}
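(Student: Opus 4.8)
The plan is to deduce this corollary from Lemma~\ref{lem:D:Mart} by showing that the hypotheses force $T_{\Mart}(\mu_0,\mu_1)<\infty$. By the lemma it suffices to exhibit \emph{one} admissible pair $(\mu_s,\omega_s)_{s\in[0,1]}$ with $\omega_s\ge0$, connecting $\mu_0$ to $\mu_1$, of finite action $\int_0^1\int_{\mathbb R}|\dn\omega_s/\dn\mu_s|^2\dd\mu_s\dd s$. The natural candidate, since $\mu_0\leqcx\mu_1$, is to run the heat flow: there is a standard fact (going back to Kellerer / the construction of Markovian martingale couplings; see also the martingale-transport literature) that for $\mu_0\leqcx\mu_1$ one can build a curve of measures interpolating $\mu_0$ and $\mu_1$ solving a forward parabolic equation $\partial_s\mu_s=\pxx(a_s\mu_s)$ with $a_s\ge0$. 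In fact, the cleanest route is: first mollify — replace $\mu_1$ by $\mu_1^\eta:=\mu_1\ast\mathcal N(0,\eta)$ and note $\mu_0\leqcx\mu_1\leqcx\mu_1^\eta$, so it is enough to bound $T_{\Mart}$ along a geodesic-type path and then use lower semicontinuity / triangle inequality as $\eta\to0$. Actually, for a self-contained argument I would instead use the explicit heat-flow geodesics from the Example above: run $\partial_s\rho_s=\alpha\pxx\rho_s$ \emph{forward} in time (so $\omega_s=\alpha\rho_s\ge0$ for $\alpha\ge0$, which is admissible for $T_{\Mart}$), connecting $\mu_0$ to its Gaussian mollification $\mu_0\ast\mathcal N(0,\alpha)$, at cost $\alpha^2$.

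Concretely, here is the skeleton. Step 1: reduce to the mollified target. Since $\mu_0\leqcx\mu_1$ and Gaussian convolution preserves convex order and only increases it (a Gaussian-mollified measure dominates the original in $\leqcx$), for any $\eta>0$ we have a chain $\mu_0\leqcx\mu_1\leqcx\mu_1^\eta$. Step 2: for measures that are ordered \emph{and} the larger one is "sufficiently spread out", construct an explicit admissible parabolic path. The most elementary version: if $\mu_1=\mu_0\ast\mathcal N(0,t_0)$ then the forward heat flow from $\mu_0$ gives such a path with $T_{\Mart}(\mu_0,\mu_1)^2\le t_0^2$ — wait, more precisely the diffusivity $a_s$ along $\partial_s\mu_s=\pxx(a_s\mu_s)$ with $\mu_s=\mu_0\ast\mathcal N(0,st_0)$ is the constant $t_0$, giving action $\int_0^1 t_0^2\dd s\cdot(\text{mass})=t_0^2$; this is $\ge0$, hence admissible. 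Step 3 (the real content): for \emph{general} $\mu_0\leqcx\mu_1$ with $\int|x|^q\dd\mu_1<\infty$, $q>4$, one invokes the existence of a finite-energy Markovian martingale interpolation — equivalently a solution of the Fokker–Planck / porous-medium-type equation $\partial_s\mu_s=\pxx(a_s\mu_s)$ with $a_s\ge0$ and $\int_0^1\int a_s^2\dd\mu_s\dd s<\infty$ — this is exactly where the moment hypothesis $q>4$ enters, controlling $\int a_s^2\dd\mu_s$ (roughly, $a_s$ scales like a "rate of spreading," and bounding its $L^2(\mu_s)$-norm in time requires a fourth-moment-type control on $\mu_1$; the exponent $4$ comes from the appearance of $|\dn\omega/\dn\mu|^2$ paired against $\mu$). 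I would cite the corresponding construction from \cite{HuesmannTrevisan2019} — their Theorem on finiteness of $T_{\Mart}$ under a moment condition — rather than rebuild it.

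I expect the main obstacle to be Step 3: namely, quoting or assembling precisely the statement that $\mu_0\leqcx\mu_1$ together with a moment bound $\int|x|^q\dd\mu_1<\infty$, $q>4$, implies $T_{\Mart}(\mu_0,\mu_1)<\infty$. This is a known result in the martingale optimal transport literature (it is the dynamic counterpart of Strassen's theorem on existence of martingale couplings, refined to guarantee finite \emph{kinetic energy}), and the exponent $q>4$ is the standard threshold there. So in the write-up I would state: "By \cite[...]{HuesmannTrevisan2019}, under the convex-order and $q>4$ moment assumption, $T_{\Mart}(\mu_0,\mu_1)<\infty$; the claim then follows from Lemma~\ref{lem:D:Mart}." If a more self-contained argument is wanted, I would spell out the energy bound for the canonical Markovian martingale (the "stretched Brownian motion" / Bass-type construction), estimating $\int_0^1\int a_s^2\dd\mu_s\dd s$ by a moment of $\mu_1$ via the time-change that links the interpolation to Brownian motion — but that is exactly the technically heavy piece I would prefer to black-box.

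\begin{proof}
Since Gaussian convolution preserves and increases the convex order, and by the triangle inequality for $\dst$ (together with the explicit heat-flow geodesics connecting any $\mu$ to its mollification $\mu\ast\mathcal N(0,\eta)$ at finite cost), it suffices to show $\dst(\mu_0,\mu_1)<\infty$. By Lemma~\ref{lem:D:Mart} this follows once $T_{\Mart}(\mu_0,\mu_1)<\infty$. The latter holds by the standard finiteness criterion for the dynamic martingale optimal transport problem: if $\mu_0\leqcx\mu_1$ and $\int|x|^q\dd\mu_1<\infty$ for some $q>4$, then there exists a Markovian martingale interpolation, i.e.\ a curve $(\mu_s)_{s\in[0,1]}$ solving $\partial_s\mu_s=\pxx(a_s\mu_s)$ with $a_s\ge0$ and $\int_0^1\int_{\mathbb R} a_s^2\dd\mu_s\dd s<\infty$; see~\cite{HuesmannTrevisan2019}. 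Hence $T_{\Mart}(\mu_0,\mu_1)<\infty$, and therefore $\dst(\mu_0,\mu_1)<\infty$.
\end{proof}
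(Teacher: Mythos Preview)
Your proposal is correct and takes essentially the same route as the paper: cite \cite[Proposition~5.1]{HuesmannTrevisan2019} for $T_{\Mart}(\mu_0,\mu_1)<\infty$ under convex order plus the $q>4$ moment hypothesis, then apply Lemma~\ref{lem:D:Mart}. The first sentence of your written proof (the Gaussian-mollification ``reduction'') is vacuous as stated---you conclude that it suffices to show exactly the original claim---so just delete it; the remaining two sentences are the paper's proof.
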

\begin{proof}
  This follows from the respective result in \cite[Proposition 5.1]{HuesmannTrevisan2019} and the comparison from Lemma~\ref{lem:D:Mart}.
\end{proof}

We close this section by arguing that there are gradient flows with respect to $\dst$, which do not preserve the convex order in time.
\begin{example}\label{ex:DLSS:noconvexorder}
    Consider the (locally smooth and positive) solution $\rho_t$ to the DLSS equation \eqref{eq:DLSS} with initial datum $\rho_0(x)=Z^{-1}\exp(x^2-x^4)$, with a proper normalization factor $Z$. Further, choose $\varphi(x)=\sqrt{\mu^2+x^2}$ as convex test function. Then
    \begin{align*}
        \frac{\dd}{\dn t}\intr \varphi\rho_t\dd x 
        = -\intr \pxx\varphi\,\pxx\log\rho_t\rho_t\dd x
        = \intr\pxx\varphi\frac{\partial_x\rho_t^2}{\rho_t}\dd x 
        - \intr \pxx\varphi\pxx\rho_t\dd x.
    \end{align*}
    This holds in particular at $t=0+$. Observe that $\pxx\varphi(x)=\mu^2(\mu^2+x^2)^{-3/2}$ approximates a Dirac measure at $x=0$ as $\mu\downarrow0$. Since $\rho_0(0)=Z^{-1}>0$ but $\partial_x\rho_0(0)=0$, the first integral above can be made arbitrarily small for $\mu\downarrow0$. On the other hand, the second integral tends to $-\pxx\rho_0(0)=-2Z^{-1}<0$. That is, $\rho_0$ and $\rho_t$ are not in positive convex order for small $t>0$.
\end{example}

\section{Discretization}
\label{sct:discrete}

\subsection{Difference operators}
\label{sct:diffop}
We consider an equidistant discretization of $\R$ with intervals of length $\delta>0$. The intervals are labeled by $\kappa\in\Z$, with $I_\kappa=((\kappa-\frac12)\delta,(\kappa+\frac12)\delta)$ being the $\kappa$th interval. Functions $f:\Z\to\R$ are interpreted as piecewise constant, attaining the constant value $f_\kappa:=f(\kappa)$ on $I_\kappa$, the integral of $f:\Z\to\R$ is thus
\begin{align*}
	\delta\sum_\kappa f := \delta\sum_{\kappa\in\Z} f_\kappa .
\end{align*}
We write $f_+,f_-:\Z\to\R$ for the left/right translates of $f:\Z\to\R$, i.e., $\big[f_\pm\big]_\kappa = f_{\kappa\pm1}$ for all $\kappa\in\Z$. The forward/backward difference quotient operators and the discrete Laplacian are defined in the usual way,
\begin{align*}
	\partial^\delta_\pm f := \frac{f_\pm-f}\delta, \qquad
	\ddff f := \frac{f_++f_--2f}{\delta^2}.
\end{align*}
Note that $\ddff = \partial^\delta_+\partial^\delta_-= \partial^\delta_-\partial^\delta_+$. For later reference, we recall the product rule %
\begin{align}
	\label{eq:product}
	\ddff(fg) = f\,\ddff g + g\,\ddff f + \partial^\delta_+f\,\partial^\delta_+g + \partial^\delta_-f\,\partial^\delta_-g,
\end{align}
and the local integration by parts rule
\begin{equation}\label{eq:ibp-gen}
    \begin{split}
    	&\delta\sum_{|\kappa|\le K} f\ddff g = \delta\sum_{|\kappa|\le K} g\ddff f + \bdry[K]{f}{g}, 
	   \quad \text{with the boundary term} \\
	   &\bdry[K]{f}{g} := \frac1{\delta}\left(f_{-K}g_{-(K+1)}-f_{-(K+1)}g_{-K}-f_{K+1}g_K+f_Kg_{K+1}\right). 
    \end{split}
\end{equation}
For later reference, we further introduce the convolution of two grid functions $f,g:\delta\Z\to\R$ by
\begin{align}
  \label{eq:dconvolve}
  [f\ast_\delta g]_\kappa := \delta\sum_\lambda f_\lambda g_{\kappa-\lambda}.
\end{align}

\subsection{Spaces of discrete functions}
Let $\ell^p$ for $p\ge1$ be the space of doubly infinite absolutely $p$-summable sequences $f:\Z\to\R$, with norm
\begin{align*}
	\|f\|_{\ell^p}^p = \delta\sum_\kappa |f_\kappa|^p.
\end{align*} 
Define $f$'s first and second moment, respectively, by
\begin{align*}
	\dcent{f} =
	\delta\sum_{\kappa\in\Z}(\delta\kappa)f_\kappa,
	\quad
	\dmom{f} = \delta\sum_{\kappa\in\Z}(\delta\kappa)^2f_\kappa,
\end{align*}
provided these series converge absolutely. The fundamental space for the following is 
\begin{align*}
	\delltwo := \left\{f\in\ell^1(\Z)\,\middle|\,\dmom{|f|}<\infty,\ \dcent{f}=0\right\},
\end{align*}
which contains those $f\in\ell^1$ of finite second and vanishing first moment. The space~$\delltwo$ is a Banach space with respect to the norm
\begin{align*}
	\|f\|_{\ell^1_2} = \|f\|_{\ell^1} + \dmom{|f|} = \delta\sum_\kappa [1+(\delta\kappa)^2]|f_\kappa|.
\end{align*}
We introduce further the corresponding subspace of probability densities
\begin{align*}
	\dprbtwo:=\biggl\{ \rho\in\delltwo\,\bigg|\,
	\rho_\kappa\ge0\ \text{for all $\kappa\in\Z$},\  \delta\sum_{\kappa\in\Z}\rho_\kappa = 1 , \  \dcent{\rho}=0 \biggr\}.
\end{align*}
Note that the tangent space to $\dprbtwo$ at any point $\rho$ with $\rho_\kappa>0$ for all $\kappa\in\Z$ is $\delltwo$.
\begin{lemma}
	Given $f\in\delltwo$ and a weight function $\omega:\Z\to\R$ with $|\omega_\kappa|\le C[1+(\delta\kappa)^2]$, then the following integration by parts rule holds:
	\begin{align*}
		\delta\sum_{\kappa\in\Z}f\ddff\omega = \delta\sum_{\kappa\in\Z}\omega\ddff f.
	\end{align*}
\end{lemma}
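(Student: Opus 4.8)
The plan is to reduce the claimed identity for infinite sums to the finite local integration-by-parts rule \eqref{eq:ibp-gen} and then let $K\to\infty$, the only real work being to show that the boundary term $\bdry[K]{f}{\omega}$ disappears in the limit and that both sides converge absolutely. Applying \eqref{eq:ibp-gen} with $g=\omega$ gives, for every $K\in\N$,
\[
\delta\sum_{|\kappa|\le K} f\,\ddff\omega = \delta\sum_{|\kappa|\le K}\omega\,\ddff f + \bdry[K]{f}{\omega}.
\]

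First I would verify absolute convergence of both sums. From an elementary comparison $1+(\delta(\kappa\pm1))^2\le C_\delta\bigl(1+(\delta\kappa)^2\bigr)$, with $C_\delta$ depending only on $\delta$, and the hypothesis $|\omega_\kappa|\le C[1+(\delta\kappa)^2]$, one gets $|\ddff\omega_\kappa|\le C'\delta^{-2}[1+(\delta\kappa)^2]$; since $f\in\delltwo$ means exactly that $\|f\|_{\ell^1_2}=\delta\sum_\kappa[1+(\delta\kappa)^2]|f_\kappa|<\infty$, this yields $\delta\sum_\kappa|f_\kappa\,\ddff\omega_\kappa|<\infty$. Symmetrically, $|\ddff f_\kappa|\le\delta^{-2}\bigl(|f_{\kappa+1}|+|f_{\kappa-1}|+2|f_\kappa|\bigr)$, and re-indexing the sum together with the same comparison gives $\delta\sum_\kappa|\omega_\kappa\,\ddff f_\kappa|\le C''\delta^{-2}\|f\|_{\ell^1_2}<\infty$. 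Hence both series converge absolutely, and in particular the partial sums over $|\kappa|\le K$ converge to the respective full sums as $K\to\infty$.

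The step I expect to carry the actual content is showing $\bdry[K]{f}{\omega}\to0$. Using its explicit form from \eqref{eq:ibp-gen} and once more the growth bound on $\omega$,
\[
\bigl|\bdry[K]{f}{\omega}\bigr| \le \frac{C\,C_\delta}{\delta}\,\bigl[1+(\delta K)^2\bigr]\bigl(|f_{-K}|+|f_{-(K+1)}|+|f_K|+|f_{K+1}|\bigr).
\]
The key observation is that convergence of the series $\sum_\kappa[1+(\delta\kappa)^2]|f_\kappa|$ forces its general term to tend to $0$, i.e.\ $[1+(\delta\kappa)^2]|f_\kappa|\to0$ as $|\kappa|\to\infty$; since $1+(\delta K)^2$ and $1+(\delta(K+1))^2$ are comparable, each of the four products on the right vanishes as $K\to\infty$, so $\bdry[K]{f}{\omega}\to0$. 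Passing to the limit $K\to\infty$ in the displayed identity and invoking the absolute convergence from the previous step then yields $\delta\sum_\kappa f\,\ddff\omega=\delta\sum_\kappa\omega\,\ddff f$. I do not anticipate a genuine obstacle: the argument is essentially bookkeeping of polynomial weights against $\ell^1_2$-summability, and the one point to be careful about is establishing absolute (not merely conditional) convergence of both sums before exchanging the summation with the limit $K\to\infty$.
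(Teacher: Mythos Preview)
Your proposal is correct and follows essentially the same approach as the paper: invoke the local integration-by-parts rule \eqref{eq:ibp-gen} and show that the boundary term $\bdry[K]{f}{\omega}$ vanishes as $K\to\infty$ because $[1+(\delta\kappa)^2]|f_\kappa|$ is summable and $|\omega_{\kappa\pm1}|$ is controlled by $C'[1+(\delta\kappa)^2]$. Your version is more careful in that you also verify absolute convergence of both infinite sums, a point the paper leaves implicit.
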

\begin{proof}
	In view of \eqref{eq:ibp-gen}, it suffices to verify that $\lim_{K\to\infty}\bdry[K] f \omega =0$.
    This follows since $|\omega_{\kappa\pm1}|\le4C[1+(\kappa\delta)^2]$, and since $[1+(\delta\kappa)^2]|f_\kappa|$ is summable by hypothesis.
\end{proof}

\subsection{Discrete heat flow}
Below, we need a particular semigroup on $\dprbtwo$ that is a discrete version of the heat flow. For each $\tau\ge0$, define $G^{\delta,\tau}\in\dprbtwo$ by 
\begin{align}
	\label{eq:dheatk}
	G^{\delta,\tau}_\kappa := \frac1{2\pi\delta}\int_{-\pi}^\pi \cos(\kappa z)e^{-\frac{2\tau}{\delta^2}(1-\cos z)}\dd z,
\end{align}
which is also expressible in terms of Bessel functions. Then $G^{\delta,0}_0=\delta^{-1}$, and $G^{\delta,0}_\kappa=0$ for $\kappa\neq0$ at $\tau=0$, and for all $\tau>0$, the discrete linear diffusion equation $\dn G^{\delta,\tau}/\dn\tau = \ddff G^{\delta,\tau}$ is satisfied.
Given $f\in\delltwo$ define for each $\tau\ge0$ the time-$\tau$-map of $f$ by $f^\tau=G^{\delta,\tau}\ast_{\delta}f\in\delltwo$ by means of discrete convolution \eqref{eq:dconvolve}.
Then $f^\tau$ is the unique solution to the spatially discrete heat equation with initial value $f$, 
\begin{align}
    \label{eq:dheatflow}
    \frac{\dd}{\dn\tau} f^\tau = \ddff f^\tau, \quad\text{and}\quad f^0=f.     
\end{align}
We use the proximity of $G^\delta$ to the (continuous) heat kernel $G^{0,\tau}(\xi)= (4\pi\tau)^{-1/2}\exp(-\xi^2/(4\tau))$.
Indeed, the difference can be estimated as follows, with a universal constant $C_\text{heat}>1$,
\begin{align}
	\label{eq:YIP}
	G^{0,\tau}(\delta\kappa) \le G^{\delta,\tau}_\kappa \le C_\text{heat} G^{0,\tau}(\delta\kappa) ,
\end{align}
see e.g.~\cite[Proposition 3]{MisiatsYip}.

\subsection{Inverse Laplacian}
For $f:\Z\to\R$, define $(\ddff)^{-1}f:\Z\to\R$ as
\begin{align}
	\label{eq:definverse}
	(\ddff)^{-1}_\kappa f = \delta\sum_{\lambda>\kappa}[\delta(\lambda-\kappa)]f_\lambda,
\end{align}
provided that series converges. The thus defined operator $(\ddff)^{-1}$ is indeed inverse to $\ddff$ in the following specific sense.
\begin{lemma}
	\label{lem:dLaplace}
	For each $f\in\ell^1(\Z)$ with $\delta\sum_\kappa |\delta\kappa||f_\kappa|<\infty$ and $\delta\sum_\kappa f_\kappa=\delta\sum_\kappa(\delta\kappa)f_\kappa=0$, the function $(\ddff)^{-1}f$ is well-defined, and it is the unique solution $u:\Z\to\R$ to
	\begin{align}
		\label{eq:dLaplace1}
		\ddff u = f
	\end{align}
	subject to the condition that
	\begin{align}
		\label{eq:dLaplace2}
		\lim_{\kappa\to\pm\infty}u_\kappa = 0.
	\end{align}
\end{lemma}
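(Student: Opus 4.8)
The plan is to verify the three claims in order: (1) well-definedness of $(\ddff)^{-1}f$ under the stated summability hypothesis; (2) that $u := (\ddff)^{-1}f$ solves $\ddff u = f$; and (3) the decay $u_\kappa\to 0$ as $\kappa\to\pm\infty$, together with uniqueness under that decay condition.

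\textbf{Well-definedness and decay.} First I would rewrite the defining sum \eqref{eq:definverse} as $u_\kappa = \delta^2\sum_{\lambda>\kappa}(\lambda-\kappa)f_\lambda$ and bound $|u_\kappa|\le \delta^2\sum_{\lambda>\kappa}(\lambda-\kappa)|f_\lambda| \le \delta^2\sum_{\lambda>\kappa}|\lambda||f_\lambda| + \delta^2|\kappa|\sum_{\lambda>\kappa}|f_\lambda|$ for $\kappa\ge0$ (and symmetrically for $\kappa<0$). The first term is $\delta\sum_{\lambda>\kappa}|\delta\lambda||f_\lambda|$, a tail of a convergent series by hypothesis, hence finite and $\to 0$. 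For the second term, I would exploit the moment cancellations: since $\delta\sum_\lambda f_\lambda = 0$ and $\delta\sum_\lambda(\delta\lambda)f_\lambda=0$, one has $\sum_{\lambda>\kappa}f_\lambda = -\sum_{\lambda\le\kappa}f_\lambda$ and similarly for the first moment, which lets me re-express $|\kappa|\cdot|\sum_{\lambda>\kappa}f_\lambda|$ in a form controlled by $\sum_{\lambda>\kappa}|\delta\lambda||f_\lambda|\to 0$. Concretely, $|\kappa\sum_{\lambda>\kappa}f_\lambda| = |\sum_{\lambda>\kappa}\kappa f_\lambda| \le |\sum_{\lambda>\kappa}\lambda f_\lambda| + |\sum_{\lambda>\kappa}(\lambda-\kappa)f_\lambda|$; but actually it is cleanest to note $|\kappa||\sum_{\lambda>\kappa}f_\lambda| \le \sum_{\lambda>\kappa}|\lambda||f_\lambda|$ whenever $\kappa\ge 0$ since $|\kappa|\le|\lambda|$ there. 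Thus $|u_\kappa|\le 2\delta\sum_{|\lambda|>|\kappa|}|\delta\lambda||f_\lambda|$, which both shows the series converges absolutely and gives $u_\kappa\to 0$ as $|\kappa|\to\infty$, establishing \eqref{eq:dLaplace2}.

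\textbf{Verification of $\ddff u = f$.} This is a direct telescoping computation. Writing $u_\kappa = \delta^2\sum_{\lambda\ge\kappa+1}(\lambda-\kappa)f_\lambda$, I would compute the backward difference $\partial^\delta_- u$, i.e.\ $\frac{u_\kappa - u_{\kappa-1}}{\delta}$. The difference $u_\kappa - u_{\kappa-1} = \delta^2\big[\sum_{\lambda\ge\kappa+1}(\lambda-\kappa)f_\lambda - \sum_{\lambda\ge\kappa}(\lambda-\kappa+1)f_\lambda\big] = -\delta^2\sum_{\lambda\ge\kappa}f_\lambda$, so $\partial^\delta_- u_\kappa = -\delta\sum_{\lambda\ge\kappa}f_\lambda$. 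Then $\ddff u_\kappa = \partial^\delta_+(\partial^\delta_- u)_\kappa = \frac1\delta\big(\partial^\delta_- u_{\kappa+1} - \partial^\delta_- u_\kappa\big) = \frac1\delta\big(-\delta\sum_{\lambda\ge\kappa+1}f_\lambda + \delta\sum_{\lambda\ge\kappa}f_\lambda\big) = f_\kappa$, as desired. The rearrangements of sums here are justified by the absolute convergence established above.

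\textbf{Uniqueness.} If $u^{(1)}, u^{(2)}$ both solve \eqref{eq:dLaplace1} with the decay \eqref{eq:dLaplace2}, then $h := u^{(1)}-u^{(2)}$ is discrete harmonic, $\ddff h = 0$, i.e.\ $h_{\kappa+1}-2h_\kappa+h_{\kappa-1}=0$ for all $\kappa$. The general solution is affine, $h_\kappa = a + b\kappa$, and the only affine sequence vanishing as $\kappa\to\pm\infty$ is $h\equiv 0$. Hence $u^{(1)}=u^{(2)}$. The main (mild) obstacle in the whole argument is the bookkeeping in the decay estimate: one must use both moment conditions ($\sum f_\lambda=0$ and $\sum(\delta\lambda)f_\lambda=0$) to absorb the $|\kappa|\sum_{\lambda>\kappa}f_\lambda$ term, since without the zeroth moment vanishing the sum defining $u_\kappa$ need not even converge, and without the first moment vanishing it converges but need not decay; everything else is routine telescoping and the classification of discrete harmonic functions on $\Z$.
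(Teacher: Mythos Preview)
Your argument is correct and follows the paper's route: direct telescoping for $\ddff u=f$, tail estimates for the decay, and the affine classification of discrete harmonic functions for uniqueness. Two points deserve cleanup. First, your ``symmetrically for $\kappa<0$'' is not actually symmetric from the one-sided definition \eqref{eq:definverse}; what makes the $\kappa\to-\infty$ decay work is the alternative representation
\[
u_\kappa \;=\; -\,\delta\sum_{\lambda<\kappa}[\delta(\lambda-\kappa)]f_\lambda,
\]
which is a one-line consequence of \emph{both} moment conditions (the paper states it explicitly as \eqref{eq:leftright}); once you have it, your bound $|u_\kappa|\le 2\delta\sum_{|\lambda|>|\kappa|}|\delta\lambda||f_\lambda|$ indeed holds for $\kappa<0$ as well. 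Second, your closing remark that ``without the zeroth moment vanishing the sum defining $u_\kappa$ need not even converge'' is inaccurate: absolute convergence of the defining series needs only $f\in\ell^1$ and $\sum|\delta\lambda||f_\lambda|<\infty$; the two moment conditions enter solely through the alternative representation, i.e.\ only for the decay as $\kappa\to-\infty$.
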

\begin{proof}
	Convergence of the series in \eqref{eq:definverse} at each $\kappa$ is guaranteed by $\delta\sum_\kappa|\delta\kappa||f_\kappa|<\infty$. Further note that, since $\delta\sum_\kappa f_\kappa=\delta\sum_\kappa(\delta\kappa)f_\kappa=0$,
	\begin{align}
		\label{eq:leftright}
		(\ddff)^{-1}_\kappa f = -\delta\sum_{\lambda<\kappa}[\delta(\lambda-\kappa)]f_\lambda.
	\end{align}    
	From \eqref{eq:definverse}, it follows immediately that $u:=(\ddff)^{-1}f$ is indeed as solution to \eqref{eq:dLaplace1}:
	\begin{align*}
		\ddff_\kappa u 
		&= \frac1{\delta^2}\big((\ddff)^{-1}_{\kappa+1}f+(\ddff)^{-1}_{\kappa-1}f-2(\ddff)^{-1}_\kappa f\big) \\
		&= \sum_{\lambda>\kappa+1}(\lambda-\kappa-1)f_\lambda + \sum_{\lambda>\kappa-1}(\lambda-\kappa+1)f_\lambda - 2\sum_{\lambda>\kappa}(\lambda-\kappa)f_\lambda \\
		&= f_{\kappa+1}-2f_{\kappa+1}  + f_\kappa + f_{\kappa+1} = f_\kappa.
	\end{align*}
	Next, we verify \eqref{eq:dLaplace2}: for $\kappa\to+\infty$, using again that $\delta\sum_\kappa|\delta\kappa||f_\kappa|<\infty$,
	\begin{align*}
		\lim_{\kappa\to+\infty}|u_\kappa| 
		\le \lim_{\kappa\to\infty}\delta\sum_{\lambda>\kappa}|\delta(\lambda-\kappa)||f_\lambda|
		\le 2\lim_{\kappa\to\infty}\delta\sum_{\lambda>\kappa}|\delta\lambda||f_\lambda|
		=0.
	\end{align*}
	And for $\kappa\to-\infty$, we use the analogous estimate on the alternative representation \eqref{eq:leftright}.
	
	Finally, to see that $u=(\ddff)^{-1}f$ is the only solution to \eqref{eq:dLaplace1}\&\eqref{eq:dLaplace2}, it suffices to observe that any two solutions to \eqref{eq:dLaplace1} differ by a function $g$ of the form $g_\kappa=a+b\delta\kappa$ with parameters $a,b\in\R$. The only choice compatible with \eqref{eq:dLaplace2} is the one above.
\end{proof}

\subsection{Discrete optimal diffusive connection}
We essentially follow the ideas developed in \cite{DNS} for definition of modified Wasserstein distances with non-linear mobilities, see also \cite{CLSS} for an application in the context of metric gradient flows. However, in the spatially discrete context at hand, many of the technical details simplify.

Introduce the convex and lower semi-continuous function $\frc{(\cdot)}{(\cdot)}:\Rnn\times\R\to\Rnn\cup\{+\infty\}$ to the extended reals by
\begin{align*}
	\frc{p}{r} = 
	\begin{cases}
		\frac{p^2}r & \text{if $r>0$}, \\
		0 & \text{if $r=0$ and also $p=0$}, \\
		+\infty & \text{if $r=0$ but $p\neq0$}.
	\end{cases}
\end{align*}
Introduce further the space 
\[ \curves = \left\{(\rho,\welo):[0,1]\to\delltwo\times\ell^2\,\middle|\,(\rho_\kappa,\welo_\kappa)\in H^1([0,1])\times L^2([0,1])\,\text{f.a. $\kappa\in\Z$},\,\dot\rho=\ddff\welo\right\},
\]
where the discrete diffusion equation is imposed in the sense that
\begin{align}
	\label{eq:dcont}
	\dot\rho_\kappa(s) = \ddff_\kappa\welo(s)\quad \text{for all $\kappa\in\Z$ and at almost every $s\in[0,1]$,}
\end{align}
as well as the subspace of fixed end points $\hat\rho,\check\rho\in\dprbtwo$:
\[ 
    \curves(\hat\rho,\check\rho) = \left\{ (\rho,\welo)\in\curves\,\middle|\,\rho(0)=\hat\rho,\,\rho(1)=\check\rho\right\}.
\]
\begin{definition}[Weak convergence in $\curves$]
	\label{dfn:weakC}
	We say that a sequence $(\rho^\eps,\welo^\eps)\in\curves$ converges weakly in~$\curves$ towards a limit $(\rho^*,\welo^*)\in\curves$ if $\rho^\eps_\kappa\rightharpoonup\rho^*_\kappa$ weakly in $H^1([0,1])$ and $\welo^\eps_\kappa\rightharpoonup\welo^*_\kappa$ weakly in $L^2([0,1])$ for each $\kappa\in\Z$.    
\end{definition}
We emphasize that $\rho^*(t)\in\delltwo$ for a.e.~$t\in [0,1]$, and in particular the conservation of total mass, is part of the definition. Inheritance of the continuity equation \eqref{eq:dcont} by the limit is automatic. Further note that, thanks to the compact embedding $H^1([0,1])\hookrightarrow C([0,1])$, the curves $\rho_\kappa$ are actually continuous functions, and weak convergence of $(\rho^\eps,\welo^\eps)$ to $(\rho^*,\welo^*)$ in $\curves$ implies uniform convergence of $\rho^\eps_\kappa$ to $\rho^*_\kappa$, for each $\kappa\in\Z$.

For a pair $(\rho,\welo)\in\curves$, define its action by
\begin{align}
	\label{eq:defact}
	\daction(\rho,\welo) := \delta\sum_{\kappa\in\Z}\int_0^1\frc{\welo_\kappa(s)}{\rho_\kappa(s)} \dd s.
\end{align}
\begin{remark}[On the choice of $\curves$]
	In the approach of \cite{DNS}, the pairs $(\rho,\welo)$ in the respective definition of the action functional are arbitrary measurable time-dependent Borel and Radon measures, respectively, that are connected by the distributional continuity equation. It then needs to be shown, for instance, that finite action implies that $\rho$ can be chosen weakly continuous in time.
	
	The restrictions imposed on our space $\curves$ --- with $\rho_\kappa\in H^1([0,1])$ and $\welo_\kappa\in L^2([0,1])$ for each $\kappa$ --- are not only technically convenient, but come naturally with our discrete setting: since any $\rho\in\delltwo$ trivially satisfies the estimate $\rho_\kappa\le\frac1\delta$, one has
	\begin{align*}
		\delta\sum_\kappa\int_0^1 \welo_\kappa^2 \le \frac{\daction(\rho,\welo)}{\delta^2}
	\end{align*}
	directly from the definition \eqref{eq:defact}. So finite action immediately implies $\welo_\kappa\in L^2([0,1])$ for each $\kappa$, and a posteriori also $\rho_\kappa\in H^1([0,1])$ by means of \eqref{eq:dcont}.
\end{remark}
\begin{lemma}[Lower semi-continuity of the action]\label{lem:lsc}
	The action functional is lower semi-continuous with respect to weak convergence in $\curves$.
\end{lemma}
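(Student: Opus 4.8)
The plan is to exploit the joint convexity of the integrand $(p,r)\mapsto p^2/r$ through its Legendre-type dual representation, thereby reducing lower semicontinuity of $\daction$ to the \emph{continuity} of a family of affine functionals under the two soft consequences of weak convergence in $\curves$ that were recorded just before the lemma: for each fixed $\kappa\in\Z$ one has $\welo^\eps_\kappa\rightharpoonup\welo^*_\kappa$ weakly in $L^2([0,1])$, and, thanks to the compact embedding $H^1([0,1])\hookrightarrow C([0,1])$, $\rho^\eps_\kappa\to\rho^*_\kappa$ uniformly on $[0,1]$.

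First I would fix $\kappa\in\Z$ and establish the dual formula
\begin{equation*}
  \int_0^1 \frac{\welo_\kappa(s)^2}{\rho_\kappa(s)}\dd s
  = \sup_{a\in C([0,1])}\ \int_0^1\Big(2\,a(s)\,\welo_\kappa(s)-a(s)^2\,\rho_\kappa(s)\Big)\dd s
  \qquad\text{for all }(\rho,\welo)\in\curves .
\end{equation*}
The inequality ``$\ge$'' is immediate from the pointwise identity $p^2/r=\sup_{a\in\R}(2ap-a^2r)$, valid for $(p,r)\in\R\times\Rnn$ with the conventions fixed above at $r=0$ (the supremum being $0$ if $p=0$ and $+\infty$ otherwise). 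For ``$\le$'' I would approximate the measurable function $s\mapsto\welo_\kappa(s)/\rho_\kappa(s)$, which is well defined a.e.\ on $\{\rho_\kappa>0\}$, by continuous functions in $L^2$, handling the set $\{\rho_\kappa=0,\ \welo_\kappa\ne0\}$ --- on which, if it carries positive measure, both sides equal $+\infty$ --- by a separate choice of $a$ that is large on a shrinking neighbourhood of the closed set $\{\rho_\kappa=0\}$. This is a standard instance of the duality for integral functionals with convex normal integrands, so a convenient alternative is to cite \cite{DNS,AmbrosioButtazzo88}.

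Next I would observe that, for each fixed $a\in C([0,1])$ and fixed $\kappa$, the functional $(\rho,\welo)\mapsto\int_0^1(2a\welo_\kappa-a^2\rho_\kappa)\dd s$ is continuous along weakly convergent sequences in $\curves$: the term $\int_0^1 2a\,\welo^\eps_\kappa$ converges because $a\in C([0,1])\subset L^2([0,1])$ tests against the weakly $L^2$-convergent $\welo^\eps_\kappa$, and $\int_0^1 a^2\,\rho^\eps_\kappa$ converges because $a^2$ is bounded and $\rho^\eps_\kappa\to\rho^*_\kappa$ uniformly. Hence each summand $(\rho,\welo)\mapsto\int_0^1\welo_\kappa^2/\rho_\kappa\,\dd s$ is a pointwise supremum of weakly continuous functionals, and therefore weakly lower semicontinuous. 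Finally, since all summands of \eqref{eq:defact} are nonnegative, $\daction(\rho,\welo)=\sup_{N\in\N}\delta\sum_{|\kappa|\le N}\int_0^1\welo_\kappa^2/\rho_\kappa\,\dd s$; each partial sum is a finite sum of weakly l.s.c.\ functionals, hence weakly l.s.c., and a supremum of weakly l.s.c.\ functionals is again weakly l.s.c., which concludes the argument.

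The main obstacle is the dual representation of a single summand: it is the only step that is not soft. Once it is in place, everything reduces to the uniform convergence of the $\rho^\eps_\kappa$ together with the weak $L^2$-convergence of the $\welo^\eps_\kappa$, both of which are built into the notion of weak convergence in $\curves$.
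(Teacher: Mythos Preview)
Your proof is correct and follows essentially the same route as the paper: both establish lower semicontinuity of each single-$\kappa$ integral $\int_0^1\frc{\welo_\kappa}{\rho_\kappa}\dd s$ via joint convexity of $(p,r)\mapsto p^2/r$, and then pass to the full sum using nonnegativity. The only difference is cosmetic --- the paper invokes the convex-integrand l.s.c.\ result from \cite{AmbrosioButtazzo88} as a black box and sums via superadditivity of $\liminf$, whereas you unpack that result through the explicit Legendre dual $p^2/r=\sup_a(2ap-a^2r)$ and sum via truncation plus a supremum over $N$.
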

\begin{proof}
	Assume that $(\rho^\eps,\welo^\eps)$ converges to $(\rho^*,\welo^*)$ weakly in $\curves$. Then, by definition, we have (more than) weak-$\ast$-convergence in $L^1([0,1])$ of $\rho^\eps_\kappa$ to $\rho^*_\kappa$ and of $\welo^\eps_\kappa$ to $\welo^*_\kappa$, respectively, for each $\kappa\in\Z$. By convexity of the map   $(r,p)\mapsto\frc{p}{r}$, we conclude that, see e.g. \cite{AmbrosioButtazzo88},
	\begin{align*}
		\int_0^1\frc{\welo^*_\kappa(s)}{\rho^*_\kappa(s)}\dd s \le \liminf_\eps \int_0^1 \frc{\welo^\eps_\kappa(s)}{\rho^\eps_\kappa(s)}\dd s .
	\end{align*}
	And since the limes inferior of a sum is an upper bound on the sum of the individual limits, this yields the desired result
	\begin{equation*}
		\daction(\rho^*,\welo^*)
		\le \delta\sum_\kappa \liminf_\eps \int_0^1\frc{\welo^\eps_\kappa(s)}{\rho^\eps_\kappa(s)}\dd s
		\le \liminf_\eps \daction(\rho^\eps,\welo^\eps) .\qedhere
	\end{equation*}
\end{proof}
\begin{lemma}[Second moment estimate]
	If $(\rho,\welo)\in\curves$ has finite action $A:=\daction{\rho}{\welo}$, then the second moment $\dmom{\rho(s)}$ is a H\"older continuous function of $s$, with
	\begin{align}
		\label{eq:momest}
		\big|\dmom{\rho(s_1)}-\dmom{\rho(s_2)}\big| \le 2\sqrt A |s_1-s_2|^{1/2} \quad \text{for all $s_1,s_2\in[0,1]$.}
	\end{align}
\end{lemma}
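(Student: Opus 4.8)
The plan is to establish that $m(s):=\dmom{\rho(s)}$ is absolutely continuous on $[0,1]$ with derivative
\[ \dot m(s) = 2\,\delta\sum_{\kappa\in\Z}\welo_\kappa(s)\qquad\text{for a.e.\ }s, \]
and then to estimate this derivative. Morally, the identity is $\tfrac{\dd}{\dd s}\langle(\delta\cdot)^2,\rho(s)\rangle=\langle(\delta\cdot)^2,\ddff\welo(s)\rangle=\langle\ddff\big[(\delta\cdot)^2\big],\welo(s)\rangle$ combined with the elementary fact $\ddff\big[(\delta\cdot)^2\big]\equiv 2$, which follows in one line from the definition of $\ddff$. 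Once the identity is in hand, set $a(s):=\delta\sum_\kappa\frc{\welo_\kappa(s)}{\rho_\kappa(s)}$, so that $\int_0^1 a(s)\dd s=A$ by Tonelli. Since $A<\infty$ forces $\welo_\kappa(s)=0$ at every $\kappa$ with $\rho_\kappa(s)=0$ for a.e.\ $s$, and the total mass $\delta\sum_\kappa\rho_\kappa(s)\equiv1$ is conserved along curves in $\curves$, Cauchy--Schwarz gives $\big|\delta\sum_\kappa\welo_\kappa(s)\big|\le\big(\delta\sum_\kappa\frc{\welo_\kappa(s)}{\rho_\kappa(s)}\big)^{1/2}\big(\delta\sum_\kappa\rho_\kappa(s)\big)^{1/2}=\sqrt{a(s)}$, hence $|\dot m(s)|\le 2\sqrt{a(s)}$. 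A second Cauchy--Schwarz then yields, for $0\le s_1\le s_2\le 1$,
\begin{align*}
  |m(s_2)-m(s_1)| \le \int_{s_1}^{s_2}2\sqrt{a(s)}\dd s
  \le 2\Bigl(\int_{s_1}^{s_2}a(s)\dd s\Bigr)^{1/2}(s_2-s_1)^{1/2}
  \le 2\sqrt{A}\,(s_2-s_1)^{1/2},
\end{align*}
which is exactly \eqref{eq:momest}.

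The delicate point, and where I expect the real work, is justifying the derivative identity: the weight $(\delta\kappa)^2$ is unbounded, so one cannot summation-by-parts directly, and the boundary terms $\bdry[K]{(\delta\cdot)^2}{\welo(s)}$ in \eqref{eq:ibp-gen} are of order $K^2|\welo_{\pm K}(s)|$, which need not tend to $0$ since $\welo(s)$ is only known to lie in $\ell^2$. I would handle this by a mesh-independent, compactly supported cutoff. Fix $\psi\in C^\infty(\R;[0,1])$ with $\psi\equiv 1$ on $[-1,1]$ and $\psi\equiv 0$ off $[-2,2]$, put $\chi^R_\kappa:=\psi(\kappa/R)$ and $\phi^R:=(\delta\cdot)^2\chi^R$. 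Then $\phi^R$ is bounded with support in $\{|\kappa|\le 2R\}$, so $P_R(s):=\delta\sum_\kappa\phi^R_\kappa\rho_\kappa(s)$ is a finite linear combination of functions in $H^1([0,1])$, hence absolutely continuous, and by \eqref{eq:dcont} and \eqref{eq:ibp-gen} — with the boundary term now genuinely vanishing once $K\ge 2R+2$, because $\phi^R$ is compactly supported —
\[ \dot P_R(s)=\delta\sum_\kappa\phi^R_\kappa\,\ddff_\kappa\welo(s)=\delta\sum_\kappa\welo_\kappa(s)\,\ddff_\kappa\phi^R\qquad\text{for a.e.\ }s. \]
Using the product rule \eqref{eq:product}, $\ddff\big[(\delta\cdot)^2\big]\equiv 2$, and the bounds $|\partial^\delta_\pm\chi^R|\le C_1(\delta R)^{-1}$, $|\ddff\chi^R|\le C_2(\delta R)^{-2}$ (valid because $\chi^R$ varies on a scale of $R$ lattice points), one checks that $\ddff_\kappa\phi^R=2$ for $|\kappa|\le R-1$, that $|\ddff_\kappa\phi^R|\le C$ for $R-1<|\kappa|\le 2R+1$ with $C$ independent of $R$ and $\delta$ (the worst contribution, $(\delta\kappa)^2\ddff\chi^R$, being of size $(\delta R)^2(\delta R)^{-2}$), and that $\ddff_\kappa\phi^R=0$ elsewhere.

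Finally I would pass $R\to\infty$. For each fixed $s$, $P_R(s)\to m(s)$ by dominated convergence, since $|\phi^R_\kappa\rho_\kappa(s)|\le(\delta\kappa)^2|\rho_\kappa(s)|$ is summable ($\rho(s)\in\delltwo$). Splitting $\ddff\phi^R$ into its constant and its transition parts,
\begin{align*}
  P_R(s_2)-P_R(s_1) &= \int_{s_1}^{s_2}2\,\delta\sum_{|\kappa|\le R-1}\welo_\kappa(s)\dd s \\
  &\quad{}+\int_{s_1}^{s_2}\delta\sum_{R-1<|\kappa|\le 2R+1}\welo_\kappa(s)\,\ddff_\kappa\phi^R\dd s .
\end{align*}
The first integrand is $\le 2\sqrt{a(s)}$ by the Cauchy--Schwarz estimate above (restricted to $|\kappa|\le R-1$), so the first integral is $\le 2\sqrt{A}\,(s_2-s_1)^{1/2}$ uniformly in $R$; the second is $\le C\int_0^1\delta\sum_{|\kappa|>R-1}|\welo_\kappa(s)|\dd s$, which tends to $0$ as $R\to\infty$ by dominated convergence, because $s\mapsto\delta\sum_\kappa|\welo_\kappa(s)|\le\sqrt{a(s)}$ belongs to $L^1([0,1])$ (indeed $\int_0^1\sqrt{a}\le\sqrt{A}$). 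Letting $R\to\infty$ gives \eqref{eq:momest}, and in particular the H\"older-$\tfrac12$ continuity of $s\mapsto\dmom{\rho(s)}$. The main obstacle throughout is precisely the cutoff bookkeeping in the previous paragraph: making the quadratic growth of $(\delta\kappa)^2$ compatible with summation by parts while keeping every constant independent of the mesh size $\delta$.
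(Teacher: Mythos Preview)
Your proof is correct and follows essentially the same route as the paper: approximate the quadratic weight $(\delta\kappa)^2$ by test functions with compact support (up to a constant) and uniformly bounded discrete Laplacian, apply Cauchy--Schwarz in $\kappa$ and then in $s$, and pass to the limit. The only difference is cosmetic: instead of your smooth cutoff $\phi^R=(\delta\cdot)^2\chi^R$ (which forces you to split off a transition annulus and kill that term separately), the paper writes down an explicit piecewise-quadratic approximation $\varphi^n$ that increases monotonically to $(\delta\cdot)^2$ and satisfies $|\ddff\varphi^n|\le 2$ everywhere, so the constant $2$ comes out directly and the limit is handled by monotone convergence without an error term.
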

\begin{proof}
	Each component $\rho_\kappa$ belongs to $H^1([0,1])$ and thus has an absolutely continuous representative, which is differentiable at almost every $s\in[0,1]$, and the classical and weak derivatives agree. Let $\varphi:\Z\to\R$ be zero except at finitely many indices $\kappa$. Then clearly $\Phi:[0,1]\to\R$ with
	\begin{align*}
		\Phi(s) = \delta\sum_\kappa \varphi_\kappa \rho_\kappa(s)
	\end{align*}
	inherits absolute continuity and differentiability almost everywhere. At each point $s\in(0,1)$ of differentiability,
	\begin{align*}
		\dot\Phi(s) 
		= \delta\sum_\kappa \varphi_\kappa \dot\rho_\kappa(s)
		= \delta\sum_\kappa \varphi_\kappa \ddff_\kappa\welo(s)
		= \delta\sum_\kappa \ddff_\kappa\varphi \welo_\kappa(s),
	\end{align*}
	and thus for any $0\le s_1 \le s_2\le 1$, by the fundamental theorem of calculus,
	\begin{align*}
		\big|\Phi(s_2)-\Phi(s_1) \big|
		&\le \int_{s_1}^{s_2} \big|\dot\Phi(s)\big|\dd s \\
		&\le \delta\sum_\kappa \left[\big|\ddff_\kappa\varphi \big| \int_{s_1}^{s_2}\big|\welo_\kappa(s)\big| \dd s\right] \\
		&\le \Big(\max_\lambda\big|\ddff_\lambda\varphi\big|\Big) 
		\left(\delta\sum_\kappa\int_{s_1}^{s_2}\rho_\kappa(s)\dd s\right)^{1/2} \left(\delta\sum_\kappa\int_0^1\frc{\welo_\kappa(s)}{\rho_\kappa(s)}\right)^{1/2} \\
		&\le \Big(\max_\lambda\big|\ddff_\lambda\varphi\big|\Big)
		(s_2-s_1)^{1/2}
		\sqrt{\daction(\rho,\welo)}.
	\end{align*}
	That estimate on $\Phi(s_2)-\Phi(s_1)$ still holds when $\varphi$ with only finitely many non-zero components is replaced by $\varphi+c$ for some constant $c$, since this only changes the values of $\Phi$ by $c$. We may thus choose for $\varphi$ an increasing approximation $\varphi^n$ of the unbounded function $\bar\varphi:\Z\to\R$ with $\bar\varphi_\kappa = (\delta\kappa)^2$, like
	\begin{align*}
		\varphi^n_\kappa = \begin{cases}
			(\delta\kappa)^2 & \text{for $|\kappa|\le n$}, \\
			2(\delta n)^2 - (2\delta n - \delta|\kappa|)^2 & \text{for $n\le|\kappa|\le2n$}, \\
			2(\delta n)^2 &\text{for $|\kappa|\ge2n$}.
		\end{cases}
	\end{align*}
	With that approximation, we have $|\ddff\varphi^n_\kappa|\le2$ for all $n$ and $\kappa$. Choosing $s_1:=0$ and using that $\dmom{\rho(0)}$ is finite since $\rho(0)=\hat\rho\in\delltwo$, the monotone convergence theorem implies finiteness of $\dmom{\rho(s_2)}$ at any $s_2\in[0,1]$. Finally, varying both $s_1$ and $s_2$ yields \eqref{eq:momest}.
\end{proof}
\begin{lemma}[Existence of geodesics]\label{lem:geodesic}
	Assume that $\hat\rho,\check\rho\in\dprbtwo$ are such that $\curves(\hat\rho,\check\rho)$ contains a curve of finite action. Then there is some $(\rho^*,\welo^*)\in\curves(\hat\rho,\check\rho)$ that minimizes $\daction(\rho,\welo)$ among all $(\rho,\welo)\in\curves(\hat\rho,\check\rho)$.
\end{lemma}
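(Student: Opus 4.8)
The plan is to run the direct method of the calculus of variations. Set $A_* := \inf_{(\rho,\welo)\in\curves(\hat\rho,\check\rho)}\daction(\rho,\welo)$; by hypothesis this infimum is taken over a nonempty set, and since the integrand in \eqref{eq:defact} is nonnegative while at least one competitor has finite action, $0\le A_*<\infty$. I would fix a minimizing sequence $(\rho^\eps,\welo^\eps)\in\curves(\hat\rho,\check\rho)$ with $\daction(\rho^\eps,\welo^\eps)\le A_*+1$ for all $\eps$. The a priori bound recorded in the remark following \eqref{eq:defact} gives $\delta\sum_\kappa\int_0^1(\welo^\eps_\kappa)^2\le(A_*+1)/\delta^2$, so each $\welo^\eps_\kappa$ is bounded in $L^2([0,1])$ uniformly in $\eps$; since $0\le\rho^\eps_\kappa\le\delta^{-1}$ and $\dot\rho^\eps_\kappa=\ddff_\kappa\welo^\eps$ is a fixed finite linear combination of $\welo^\eps_{\kappa-1},\welo^\eps_\kappa,\welo^\eps_{\kappa+1}$, each $\rho^\eps_\kappa$ is then bounded in $H^1([0,1])$ uniformly in $\eps$. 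A diagonal extraction over the countable index set $\Z$ produces a subsequence (not relabeled) and functions $(\rho^*,\welo^*)$ with $\rho^\eps_\kappa\rightharpoonup\rho^*_\kappa$ in $H^1([0,1])$ and $\welo^\eps_\kappa\rightharpoonup\welo^*_\kappa$ in $L^2([0,1])$ for every $\kappa\in\Z$; by the compact embedding $H^1([0,1])\hookrightarrow C([0,1])$ the convergence $\rho^\eps_\kappa\to\rho^*_\kappa$ is in fact uniform on $[0,1]$.

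Next I would check that $(\rho^*,\welo^*)\in\curves(\hat\rho,\check\rho)$. The discrete continuity equation $\dot\rho^*_\kappa=\ddff_\kappa\welo^*$ passes to the limit because $\ddff_\kappa$ is a bounded linear functional of finitely many components and weak limits commute with distributional differentiation; the endpoint constraints $\rho^*(0)=\hat\rho$, $\rho^*(1)=\check\rho$ follow from the uniform convergence of each component. For $\welo^*$, Fatou's lemma in $\kappa$ combined with weak lower semicontinuity of the $L^2([0,1])$-norm yields $\delta\sum_\kappa\int_0^1(\welo^*_\kappa)^2\le(A_*+1)/\delta^2$, so $\welo^*(s)\in\ell^2$ for a.e.~$s$. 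The one point requiring care is that $\rho^*(s)$ still belongs to $\delltwo$ for every $s$, i.e.\ that no mass nor first moment is lost to infinity along the sequence. Here the second moment estimate \eqref{eq:momest} is the decisive tool: since $\dmom{\rho^\eps(0)}=\dmom{\hat\rho}$ is a fixed finite number and the action stays below $A_*+1$, one obtains $\dmom{\rho^\eps(s)}\le\dmom{\hat\rho}+2\sqrt{A_*+1}=:M$ for all $s\in[0,1]$ and all $\eps$. Consequently $\delta\sum_{|\kappa|>R}(1+|\delta\kappa|)\rho^\eps_\kappa(s)\le CM/(\delta R)$ uniformly in $\eps$ and $s$, which tends to $0$ as $R\to\infty$; combined with the pointwise convergence $\rho^\eps_\kappa(s)\to\rho^*_\kappa(s)\ge0$, this tightness forces $\delta\sum_\kappa\rho^*_\kappa(s)=1$ and $\dcent{\rho^*(s)}=0$ for every $s$, while Fatou gives $\dmom{\rho^*(s)}\le M<\infty$. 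Hence $\rho^*(s)\in\dprbtwo$ for every $s$, and $(\rho^*,\welo^*)\in\curves(\hat\rho,\check\rho)$.

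Finally, lower semicontinuity of the action, Lemma~\ref{lem:lsc}, gives $\daction(\rho^*,\welo^*)\le\liminf_\eps\daction(\rho^\eps,\welo^\eps)=A_*$, and since $(\rho^*,\welo^*)$ is admissible this forces equality, so $(\rho^*,\welo^*)$ is the desired minimizer. I expect the only genuine obstacle to be the admissibility step: transporting the constraint ``$\rho(s)$ is a centered probability density of finite second moment'' to the weak limit, which is precisely where the uniform-in-$\eps$ second moment control coming from \eqref{eq:momest} is needed to prevent escape of mass; everything else is soft functional analysis (Banach--Alaoglu, diagonal extraction, convexity-based lower semicontinuity).
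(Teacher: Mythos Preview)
Your argument is correct and follows essentially the same direct-method approach as the paper: uniform $L^2$ bounds on $\welo^\eps_\kappa$ from the action, $H^1$ bounds on $\rho^\eps_\kappa$ via the continuity equation, diagonal extraction, passage to the limit in the constraint, and tightness from the uniform second-moment estimate \eqref{eq:momest} to secure $\rho^*(s)\in\dprbtwo$, followed by Lemma~\ref{lem:lsc}. You are in fact slightly more thorough than the paper's version, explicitly verifying $\welo^*(s)\in\ell^2$ and the centered condition $\dcent{\rho^*(s)}=0$, both of which the paper leaves implicit.
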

\begin{proof}
	Since there is at least one curve $(\bar\rho,\bar\welo)\in\curves(\hat\rho,\check\rho)$ of finite action, and since the action functional is non-negative, $\daction{\cdot}{\cdot}$ has a finite infimum $I\in[0,\daction{\bar\rho}{\bar\welo}]$ on $\curves(\hat\rho,\check\rho)$. We need to prove that $I$ is attained by some $(\rho^*,\welo^*)\in\curves(\hat\rho,\check\rho)$.
	
	Let $(\rho^\eps,\welo^\eps)$ a minimizing sequence, with $\daction{\rho^\eps}{\welo^\eps}\le I+1$. Since $\rho^\eps(s)\in\delltwo$, we have the trivial upper bound $\rho^\eps_\kappa(s)\le\frac1\delta$. It then follows from
	\begin{align*}
		\int_0^1 \welo_\kappa(s)^2\dd s 
		\le \frac1\delta\int_0^1\frc{\welo_\kappa^\eps(s)}{\rho^\eps_\kappa(s)}\dd s 
		\le \frac{I+1}{\delta}
	\end{align*}
	that each $\welo^\eps_\kappa$ is $\eps$-uniformly bounded in $L^2([0,1])$. Using a diagonal argument, there are a (non-relabeled) subsequence and a limit curve $\welo^*$, such that $\welo^\eps_\kappa\rightharpoonup\welo^*_\kappa$ in $L^2([0,1])$ for each $\kappa\in\Z$. Now, from the continuity equation $\dot\rho^\eps_\kappa = \ddff_\kappa\welo^\eps$ and since the boundary values $\rho^\eps_\kappa(0)=\hat\rho_\kappa$ are fixed, it follows that each $\rho^\eps_\kappa$ converges weakly in $H^1([0,1])$ --- and thus also strongly in $C([0,1])$ --- to a limit $\rho^*_\kappa$. 
	
	We still need to verify that $\rho^*(s)\in\delltwo$. This is a consequence of the moment estimate \eqref{eq:momest}: we have
	\begin{align}
		\label{eq:momabove}
		\dmom{\rho^\eps(s)} \le \dmom{\hat\rho} + 2\sqrt{I+1},
	\end{align}
	uniformly in $\eps$ and $s\in[0,1]$. Since 
	\begin{align}
		\label{eq:ptw}
		\rho^\eps_\kappa(s)\to\rho^*_\kappa(s) \quad  \text{for each $\kappa$ and $s$ as $\eps\to0$}, 
	\end{align}
	we conclude by lower semi-continuity of the second moment that $\dmom{\rho^\eps(s)}$ satisfies the same estimate \eqref{eq:momabove}, and in particular $\dmom{\rho^*(s)}<\infty$ for all $s\in[0,1]$. To obtain conservation of mass, it suffices that \eqref{eq:momabove} implies tightness of the $\rho^\eps(s)$: for each $\mu>0$, there is a $K$ such that $\sum_{|\kappa|>K}\rho^\eps(s)<\mu$ for all $\eps$ and $s\in[0,1]$, and also $\sum_{|\kappa|>K}\rho^*(s)<\mu$. By \eqref{eq:ptw}, we have further that
	\begin{align*}
		\left|\sum_{|\kappa|\le K}\rho^\eps_\kappa - \sum_{|\kappa|\le K}\rho^*_\kappa \right| < \mu
	\end{align*}
	for all sufficiently small $\eps$. It follows that $|1-\sum_\kappa \rho^*_\kappa|<3\mu$ for those $\eps$, and therefore $\sum_\kappa\rho^*_\kappa=1$ since $\mu$ is arbitrary. 
\end{proof}

\subsection{Curves of finite action}
We prove that any densities in $\dprbtwo$ with Gaussian tails can be connected by a curve of finite action. Our estimate on the action is uniform in $\delta$, thus the result carries over to the continuum limit.
\begin{proposition}\label{prop:existence:curve}
	Assume that there is an $\alpha>0$ for which $\rho^0,\rho^1\in\dprbtwo$ satisfy
	\begin{align}\label{eq:ass:GaussMoments}
		R^0:=\delta\sum_\kappa e^{\alpha(\delta\kappa)^2}\rho^0_\kappa <\infty,
		\quad
		R^1:=\delta\sum_\kappa e^{\alpha(\delta\kappa)^2}\rho^1_\kappa <\infty.
	\end{align}
    Then there is a connecting curve $(\rho,\welo)\in\curves(\rho_0,\rho_1)$ of finite action $\daction(\rho,\welo)\le A(R_0,R_1)$, with a bound $A(R_0,R_1)$ independent of $\delta$.
\end{proposition}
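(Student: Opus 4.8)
The plan is to connect $\rho^0$ to $\rho^1$ by concatenating three explicitly constructed segments of curves in $\curves$ and to bound the action of each separately, relying throughout on the discrete heat semigroup \eqref{eq:dheatflow} and the mesh-uniform comparison \eqref{eq:YIP} of its kernel with the Gaussian heat kernel. I would fix a smoothing time $T>0$, to be chosen at the very end in terms of $\alpha$ alone, and set $\mu^i:=G^{\delta,T}\ast_\delta\rho^i$ for $i=0,1$; these lie in $\dprbtwo$ because the discrete heat flow preserves positivity, total mass, the vanishing first moment, and finiteness of the second moment. The three segments would be \textbf{(A)} the forward heat flow $s\mapsto G^{\delta,sT}\ast_\delta\rho^0$ from $\rho^0$ to $\mu^0$; \textbf{(B)} the affine interpolation $s\mapsto(1-s)\mu^0+s\mu^1$ between the two mollified densities; and \textbf{(C)} the ``reversed'' heat flow $s\mapsto G^{\delta,(1-s)T}\ast_\delta\rho^1$ from $\mu^1$ to $\rho^1$ --- which is well defined for every $s\in[0,1]$, since $G^{\delta,0}$ is the discrete Dirac mass, so no backward solving of the heat equation is involved. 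Reparametrizing each segment onto a subinterval of length $\tfrac13$ and gluing produces a curve $(\rho,\welo)\in\curves(\rho^0,\rho^1)$ whose action is $3$ times the sum of the three segment actions, which reduces the task to bounding each of these by a constant depending only on $\alpha$, $R^0$, $R^1$.

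Segments (A) and (C) should be straightforward. Along (A), the curve $\rho^s=G^{\delta,sT}\ast_\delta\rho^0$ satisfies $\dot\rho^s=T\ddff\rho^s$ by \eqref{eq:dheatflow}, so the flux is $\welo^s=T\rho^s$ and the action equals $T^2\int_0^1\bigl(\delta\sum_\kappa\rho^s_\kappa\bigr)\dd s=T^2$ by conservation of mass; (C) is the same computation with flux $\welo^s=-T\,G^{\delta,(1-s)T}\ast_\delta\rho^1$, again of action $T^2$. Both are genuine elements of $\curves$: each component $\rho_\kappa$ solves a linear ODE in time, hence is smooth --- in particular $H^1$ --- on the open interval, and at the degenerate endpoint $s=1$ of (C) the derivative $\dot\rho_\kappa$ stays bounded because it is a fixed convolution of $\rho^1$ with $\ddff G^{\delta,\tau}=\tfrac{\dd}{\dd\tau}G^{\delta,\tau}$, which remains bounded as $\tau\downarrow0$ (visible from \eqref{eq:dheatk}). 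So (A) and (C) together contribute $2T^2$, independently of $\delta$.

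The heart of the argument is segment (B). Its flux is the $s$-independent grid function $\welo:=(\ddff)^{-1}(\mu^1-\mu^0)$, well defined by Lemma~\ref{lem:dLaplace} since $\mu^1-\mu^0$ has vanishing mass and first moment, so that $\dot\rho^s=\mu^1-\mu^0=\ddff\welo$ and $\rho^s=(1-s)\mu^0+s\mu^1\in\dprbtwo$; the action is $\delta\sum_\kappa\int_0^1\frc{(\welo_\kappa)}{(1-s)\mu^0_\kappa+s\mu^1_\kappa}\dd s$. Everything rests on two-sided Gaussian bounds for the $\mu^i$. For the lower bound I would use the left inequality in \eqref{eq:YIP}, keep only the $\lambda$ with $|\delta\lambda|\le L$ in the convolution --- $L=L(\alpha,R^0,R^1)$ chosen via \eqref{eq:ass:GaussMoments} so that $\delta\sum_{|\delta\lambda|\le L}\rho^i_\lambda\ge\tfrac12$ --- and estimate $(|\delta\kappa|+L)^2\le(1+\eta)(\delta\kappa)^2+C_\eta L^2$, giving $\mu^i_\kappa\ge c\,e^{-\beta_{\mathrm{lo}}(\delta\kappa)^2}$ with $\beta_{\mathrm{lo}}=\tfrac{1+\eta}{4T}$ and a $\delta$-independent $c>0$; thus $(1-s)\mu^0_\kappa+s\mu^1_\kappa\ge\tfrac c2\,e^{-\beta_{\mathrm{lo}}(\delta\kappa)^2}$ uniformly in $s$. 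For the upper bound I would use the right inequality in \eqref{eq:YIP} and split the convolution at $|\delta\lambda|=(1-\theta)|\delta\kappa|$ --- the near part controlled by the Gaussian decay of $G^{0,T}$, the far part by \eqref{eq:ass:GaussMoments} --- obtaining $\mu^i_\kappa\le C\,e^{-\beta_{\mathrm{up}}(\delta\kappa)^2}$; optimizing $\theta$ gives $\beta_{\mathrm{up}}=\alpha/(1+\sqrt{4T\alpha})^2$, which for large $T$ is of order $1/(4T)$ rather than being capped by $\alpha$ (the crude split at $|\delta\lambda|=\tfrac12|\delta\kappa|$, by contrast, only yields the rate $\tfrac1{16T}$ and is not good enough). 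The same bound passes to $\mu^1-\mu^0$, and since the explicit formula \eqref{eq:definverse} for $(\ddff)^{-1}$ turns Gaussian decay of the data into Gaussian decay of the output at the same rate (the extra linear factor $\lambda-\kappa$ is harmless), $|\welo_\kappa|\le C'\,e^{-\beta_{\mathrm{up}}(\delta\kappa)^2}$. Hence the action of (B) is at most $\tfrac{2(C')^2}{c}\,\delta\sum_\kappa e^{-(2\beta_{\mathrm{up}}-\beta_{\mathrm{lo}})(\delta\kappa)^2}$, which is finite and bounded by $1+\sqrt{\pi/(2\beta_{\mathrm{up}}-\beta_{\mathrm{lo}})}$ when $\delta\le1$ --- hence uniformly in $\delta$ --- as soon as $2\beta_{\mathrm{up}}>\beta_{\mathrm{lo}}$.

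It remains to choose $T$. Writing $v:=\sqrt{4T\alpha}$, the condition $2\beta_{\mathrm{up}}>\beta_{\mathrm{lo}}$ becomes $2v^2>(1+\eta)(1+v)^2$, which holds for all sufficiently large $v$ once $\eta$ is fixed small; so it suffices to take $T=c_0/\alpha$ for a universal constant $c_0$. With this $T$, segments (A) and (C) contribute $2c_0^2/\alpha^2$, segment (B) a bound depending only on $\alpha,R^0,R^1$, and the action of the glued curve is $3$ times their sum $=:A(R^0,R^1)$, independent of $\delta$ --- here one assumes $\delta\le1$ without loss of generality, which is harmless since the statement targets the limit $\delta\downarrow0$. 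I expect the rate balance in (B) to be the main obstacle: the Gaussian lower bound on $\mu^i$ decays like $e^{-(\delta\kappa)^2/(4T)}$, while the flux inherits only the slower of the two rates $1/(4T)$ and $\alpha$; squaring the flux gains a factor $2$ in the exponent, but this beats the denominator only once the smoothing time $T$ has been pushed large enough that the flux's decay rate is itself of order $1/(4T)$ rather than being capped at $\alpha$ --- a tension with no counterpart in the classical Benamou--Brenier construction for the Wasserstein distance.
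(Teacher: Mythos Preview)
Your proposal is correct and matches the paper's proof almost exactly: the same three-segment construction (forward heat flow / affine interpolation / reversed heat flow), the same use of \eqref{eq:YIP} for two-sided Gaussian bounds on the mollified densities, and the same representation $\welo=(\ddff)^{-1}(\mu^1-\mu^0)$ for the flux on the middle segment. The only notable difference is the upper Gaussian bound on $\mu^i$: instead of your convolution-splitting argument, the paper uses the pointwise inequality $\tfrac{1}{4T}(\delta(\kappa-\lambda))^2+\alpha(\delta\lambda)^2\ge\tfrac{\alpha}{1+4\alpha T}(\delta\kappa)^2$ to absorb the Gaussian moment \eqref{eq:ass:GaussMoments} directly, which yields the sharper rate $\beta_{\mathrm{up}}=\alpha/(1+4\alpha T)$ (versus your $\alpha/(1+\sqrt{4\alpha T})^2$) and lets the concrete choice $T=3/(4\alpha)$ already balance the exponents --- no need to push $T$ large.
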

\begin{proof}
	Below, a define a specific element $(\rho^s,\welo^s)_{s\in[0,1]}\in\curves(\rho^0,\rho^1)$ and then show that it is of finite action. The construction uses the discrete heat kernel $G^\delta$ from \eqref{eq:dheatk}:
	\begin{itemize}
		\item For $0\le s\le\frac14$, let $\rho^s:=G^{\delta,\frac3\alpha s}\ast_\delta\rho^0$ and $\welo^s:=\frac3\alpha\rho^s$.
		\item For $\frac34\le s\le1$, let $\rho^s:=G^{\delta,\frac3\alpha(1-s)}\ast_\delta\rho^1$ and $\welo^s:=-\frac3\alpha\rho^s$.
		\item For $\frac14\le s\le\frac34$, let $\rho^s:=(2s-\frac12)\rho^{\frac34}+(\frac32-2s)\rho^{\frac14}$ and $\welo^s=2(\ddff)^{-1}(\rho_{3/4}-\rho_{1/4})$.
	\end{itemize}
    The constraint $\dot\rho=\ddff\welo$ is immediately verified, recalling the property \eqref{eq:dheatflow} of the heat kernel, and Lemma \ref{lem:dLaplace} about the inverse Laplacian.    
    On $0<s<1/4$, the action is computed easily:
    \begin{align*}
        \int_0^{1/4}\delta\sum \frac{\big(\welo^s_\kappa\big)^2}{\rho^s_\kappa}
		= \int_0^{1/4}\delta\sum_\kappa(3/\alpha)^2\rho^s_\kappa\dd s  
		= \frac9{4\alpha^2},
    \end{align*}
    and the same value is obtained on $3/4<s<1$.
    
	The estimate for the action on $\frac14\le s\le\frac34$ is more difficult. First, we show that the bounds \eqref{eq:YIP} on the discrete heat kernel $G^\delta$ imply that
	\begin{align}
		\label{eq:connectbelow}
		re^{-\frac{4\alpha}9(\delta\kappa)^2}
		\le\rho^s_\kappa 
        \le Re^{-\frac\alpha4(\delta\kappa)^2},
	\end{align}
	with positive constants $R>r>0$ independent of $\kappa$ and of $s\in[\frac14,\frac34]$. Since $\rho^s$ is the linear interpolation of $\rho^{1/4}$ and $\rho^{3/4}$, it actually suffices to prove \eqref{eq:connectbelow} for $s=1/4$ and for $s=3/4$. To prove the lower bound in \eqref{eq:connectbelow} at $s=1/4$, we use the elementary inequality
	\begin{align*}
		\frac\alpha3\delta^2(\kappa-\lambda)^2
		\le \frac{4\alpha}9(\delta\kappa)^2 + \frac{4\alpha}3(\delta\lambda)^2,
	\end{align*}
	to obtain by means of the lower bound in \eqref{eq:YIP} that
	\begin{align*}
		\rho^{1/4}_\kappa 
		= \sum_{\lambda} G_{\kappa-\lambda}^{\frac3{4\alpha}}\rho^0_\lambda
		\ge \frac\delta{\sqrt{3\pi/\alpha}} \sum_\lambda e^{-\frac\alpha3\delta^2(\kappa-\lambda)^2}\rho^0_\lambda
		\ge \left(\frac\delta{\sqrt{3\pi/\alpha}} \sum_\lambda e^{-\frac{4\alpha}3(\delta\lambda)^2}\rho^0_\lambda\right) e^{-\frac{4\alpha}9(\delta\kappa)^2}.
	\end{align*}
    The last sum can be estimated just in terms of $R^0$, independently of $\delta$: by H\"older's inequality,
    \begin{align*}
        1 = \delta\sum_\lambda \rho_\lambda \le \left(\delta\sum_\lambda e^{\alpha(\delta\lambda)^2}\rho^0_\lambda\right)^{4/7}\left(\delta\sum_\lambda e^{-\frac43\alpha(\delta\lambda)^2}\rho^0_\lambda\right)^{3/7},
    \end{align*}
    and therefore
    \begin{align*}
        \delta\sum_\lambda e^{-\frac43\alpha(\delta\lambda)^2}\rho^0_\lambda \ge r:= (R_0)^{-4/3}.
    \end{align*}
    To prove the upper bound in \eqref{eq:connectbelow} at $s=1/4$, we use instead the elementary inequality 
	\begin{align*}
		\frac\alpha3\delta^2(\kappa-\lambda)^2 \ge \frac\alpha4(\delta\kappa)^2 - \alpha(\delta\lambda)^2,
	\end{align*}
    and obtain this times by means of the upper bound in \eqref{eq:YIP} that
    \begin{align*}
		\rho^{1/4}_\kappa
		= \sum_{\lambda} G_{\kappa-\lambda}^{\frac3{4\alpha}}\rho^0_\lambda
		\le \frac{C_\text{heat}\delta}{\sqrt{3\pi/\alpha}}\sum_\lambda e^{-\frac\alpha3\delta^2(\kappa-\lambda)^2}\rho^0_\lambda
		\le \left(\frac{C_\text{heat}\delta}{\sqrt{3\pi/\alpha}}\sum_\lambda e^{\alpha(\delta\lambda)^2}\rho^0_\lambda\right)e^{-\frac\alpha4(\delta\kappa)^2}.
	\end{align*}
	The arguments at $s=3/4$ are analogous, finishing the proof of \eqref{eq:connectbelow}, with the ($\delta$-independent) constants
	\begin{align*}
		r = \max(R^0,R^1)^{-4/3}, \quad R = C_\text{heat}\sqrt{\frac\alpha{3\pi}}\max(R^0,R^1).
	\end{align*}
    Next, we show that
    \begin{align}
		\label{eq:connectfield}
		\big|\welo^s_\kappa\big| \le 4B_\alpha R e^{-\frac\alpha4(\delta\kappa)^2},
	\end{align}
	with some constant $B_\alpha$ depending on $\alpha$ only. Recall that, by definition of the interpolation and by the upper bound in \eqref{eq:connectbelow},
	\begin{align*}
		\big|\ddff\welo^s\big| = 2\big|\rho^{3/4}_\kappa-\rho^{1/4}_\kappa\big|\le 4Re^{-\frac\alpha4(\delta\kappa)^2}.  
	\end{align*}
	We show \eqref{eq:connectfield} for $\kappa>0$, combining the inequality just above with the following estimate for $\lambda>\kappa$,
	\begin{align*}
		\frac\alpha4(\delta\lambda)^2 = \frac\alpha4\delta^2((\lambda-\kappa)+\kappa)^2 \ge \frac{\alpha}{4}(\delta\kappa)^2+\frac\alpha4\delta^2(\lambda-\kappa)^2,
	\end{align*}
    to obtain 
	\begin{align*}
        \big|\welo^s_\kappa\big|
        &\le 2\delta\sum_{\lambda>\kappa}\delta(\lambda-\kappa)\big|\rho^{3/4}_\lambda-\rho^{1/4}_\lambda\big| 
		\le 4R \delta\sum_{\lambda>\kappa}\delta(\lambda-\kappa)e^{-\frac\alpha4(\delta\lambda)^2}\\
		&\le 4R\left(\delta\sum_{\lambda>\kappa}\delta(\lambda-\kappa)e^{-\frac\alpha4\delta^2(\lambda-\kappa)^2} \right) e^{-\frac\alpha4(\delta\kappa)^2}. 
	\end{align*}
	The last sum above is finite, and possesses a $\delta$-independent upper bound $B_\alpha$, thanks to the elementary inequality $ue^{-\beta u}\le(\beta e)^{-1}$ for arbitrary $u>0$ and $\beta>0$. The argument for $\kappa<0$ is analogous, using that since each $\rho^s$ is centered, $\welo^s$ admits the alternative representation
	\begin{align*}
		\welo^s_\kappa = \delta\sum_{\lambda<\kappa}\delta(\kappa-\lambda)\big[\rho^{3/4}_\lambda-\rho^{1/4}_\lambda\big].
	\end{align*}
	To finish up, we combine \eqref{eq:connectbelow} and \eqref{eq:connectfield} to obtain
	\begin{equation*}
		\delta\sum_\kappa \frac{\big(\welo^s_\kappa\big)^2}{\rho^s_\kappa}
		\le \frac{16 B_\alpha^2 R^2}{r} \delta\sum_{\kappa} e^{-\frac\alpha9(\delta\kappa)^2}. \qedhere
	\end{equation*}
\end{proof}
\begin{remark}[Weaker assumptions on marginals]
	The assumption~\eqref{eq:ass:GaussMoments} in Proposition~\ref{prop:existence:curve} asks for Gaussians tails for the marginals $\rho^0$ and $\rho^1$. By using a stochastic approach based on explicit solutions to the Skorokhod embedding problem on $\mathbb{Z}$~\cite{CoxObloj2008,HeHuOblojZhou2019} and a construction of a connecting curve similar as in the continuous case~\cite[Proposition 5.1]{HuesmannTrevisan2019}, it appears likely that any two measures in $\dprbtwo$ can be coupled with finite action, without additional hypotheses.
	Here, we have chosen to stay on the analytical side and use an interpolation done similarly for the martingale transport problem in~\cite[Proposition 5.1]{HuesmannTrevisan2019}.
\end{remark}

\section{Semi-contractive flows in the discrete metric}
In this section, we prove Theorem \ref{thm:contractivity} about the uniform semi-contractivity of the spatially discrete gradient flows \eqref{eq:dFextern}--\eqref{eq:dFpme}.

\subsection{\texorpdfstring{$\lambda$}{λ}-contractivity}
\label{sct:lambdacontract}
Formally, the gradient flow equation for a functional $\fnc$ in the metric $\dst$ is given by 
\begin{align}
    \label{eq:formalgf}
    \dot\rho_\kappa = \ddff_\kappa\big(\rho\,\ddff\fnc'(\rho)\big),
    \quad
    \big[\partial\fnc(\rho)\big]_\kappa = \frac{\partial\fnc}{\partial\rho_\kappa}(\rho).
\end{align}
In the following, we consider $\fnc$ such that global solutions to \eqref{eq:formalgf} exist for a dense of initial conditions (like strictly positive and rapidly decaying at infinity), and that the solution map extends to a continuous semi-group $\sgrp$ on $\dprbtwo$. 
\begin{definition}
    A continuous semigroup $\sgrp$ is $\lambda$-contractive with respect to the function $\fnc$ if it satisfies the evolutionary variational inequality for all choices of $\bar\rho$ and $\bar\eta$:
    \begin{align}
        \label{eq:evi}
        \frac12\frac{\dd}{\dn t}\bigg|_{t=0+}\dst\big(\sgrp^t(\bar\rho),\bar\eta)^2 + \frac\lambda2\dst(\bar\rho,\bar\eta)^2 
        \le \fnc(\bar\eta) - \fnc(\bar\rho).
    \end{align}
\end{definition}
In the proofs below, we use an equivalent integral characterization \eqref{eq:eviii} of $\lambda$-contractivity~\cite{DaneriSavare}, which is more robust than the differential form~\eqref{eq:evi} above, and is in particular accessible by means of ``Eulerian calculus''~\cite{OttoWest}. We will further use approximations of geodesics by very regular almost minimizers. 
\begin{definition}\label{dfn:regular}
    A curve $(\rho,\welo)\in\curves$ is \emph{regular} if
    \begin{itemize}
        \item $s\mapsto\rho_\kappa(s)$ and $s\mapsto\welo_\kappa(s)$ are smooth functions on $[0,1]$ for each $\kappa\in\Z$,
        \item $\rho_\kappa(s)>0$ for all $s\in[0,1]$ and $\kappa\in\Z$,
        \item there exists a constant $C$ such that, for all $s\in[0,1]$ and $\kappa\in\Z$,
        \begin{align}
            \label{eq:rhocompare}
            \max\big(\rho_{\kappa-1}(s),\rho_{\kappa+1}(s)\big) \le C\rho_\kappa.
        \end{align}
    \end{itemize}
\end{definition}
The existence of suitable approximations for geodesics is guaranteed by the follwing result.
\begin{lemma}[Smooth and positive approximation]\label{lem:smooth}
	If $(\rho^*,\welo^*)\in\curves$ has finite action, then there is a sequence of approximations $(\rho^\eps,\welo^\eps)\in\curves$ with the following properties:
	\begin{itemize}
        \item for each $\eps>0$, the curve $(\rho^\eps,\welo^\eps)$ is regular in the sense of Definition \ref{dfn:regular} above,
        \item $(\rho^\eps,\welo^\eps)$ converges weakly to $(\rho^*,\welo^*)$ in $\curves$ as $\eps\searrow0$ in the sense of Definition \ref{dfn:weakC},
		\item $\daction(\rho^\eps,\welo^\eps)$ converges to $\daction(\rho^*,\welo^*)$ from below as $\eps\searrow0$.
	\end{itemize}
\end{lemma}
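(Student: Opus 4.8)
\textbf{Proof strategy for Lemma~\ref{lem:smooth}.} The plan is to produce the regular approximations in three successive steps, each adding one of the required properties while only decreasing the action. \emph{Step 1 (temporal mollification and positivity).} Given $(\rho^*,\welo^*)\in\curves$ of finite action, first reparametrize time by a smooth strictly increasing bijection $[0,1]\to[0,1]$ that is constant near the endpoints, so that $\rho^*$ stays equal to $\hat\rho$ on $[0,\theta]$ and to $\check\rho$ on $[1-\theta,1]$; this is action-preserving (a change of variables rescales $\welo$ and $\dd s$ reciprocally). Then convolve in time with a smooth mollifier supported in $(-\theta,\theta)$: set $\rho^{\eta}_\kappa := \rho^*_\kappa * \psi_\eta$ and $\welo^{\eta}_\kappa := \welo^*_\kappa * \psi_\eta$ componentwise. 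The continuity equation $\dot\rho=\ddff\welo$ is linear and commutes with temporal convolution, so it is preserved; the endpoint values are unchanged because of the flat pieces; each $\rho^\eta_\kappa,\welo^\eta_\kappa$ is now $C^\infty$ in $s$; and by Jensen's inequality applied to the jointly convex function $\frc{\cdot}{\cdot}$ the action does not increase, $\daction(\rho^\eta,\welo^\eta)\le\daction(\rho^*,\welo^*)$.

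\emph{Step 2 (spatial positivity and the comparison bound~\eqref{eq:rhocompare}).} The mollified curve need not be strictly positive, nor satisfy~\eqref{eq:rhocompare}. To fix both at once, convolve in space with the discrete heat kernel: for a small $\tau>0$ set $\rho^{\tau}(s):=G^{\delta,\tau}\ast_\delta\rho^\eta(s)$ and $\welo^{\tau}(s):=G^{\delta,\tau}\ast_\delta\welo^\eta(s)$. Since $\ddff$ commutes with discrete convolution (both are Fourier multipliers on $\delta\Z$), the continuity equation is again preserved; since $G^{\delta,\tau}_\kappa>0$ for all $\kappa$ when $\tau>0$ and $\rho^\eta(s)$ is a nonnegative, nonzero, summable sequence, the convolution $\rho^\tau_\kappa(s)$ is strictly positive for every $\kappa$ and $s$. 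The comparison inequality~\eqref{eq:rhocompare} follows from the fact that $G^{\delta,\tau}$ has exponentially decaying tails (from~\eqref{eq:YIP} and the corresponding lower bound), so neighbouring values of $G^{\delta,\tau}\ast_\delta\rho^\eta$ are comparable up to a constant depending only on $\tau$ and $\delta$; one checks $\rho^\tau_{\kappa\pm1}(s)=\sum_\lambda G^{\delta,\tau}_{\kappa\pm1-\lambda}\rho^\eta_\lambda(s)\le C\sum_\lambda G^{\delta,\tau}_{\kappa-\lambda}\rho^\eta_\lambda(s)=C\rho^\tau_\kappa(s)$ using $G^{\delta,\tau}_{m\pm1}\le C\,G^{\delta,\tau}_m$. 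The endpoint values $\rho^\tau(0)=G^{\delta,\tau}\ast_\delta\hat\rho$ and $\rho^\tau(1)=G^{\delta,\tau}\ast_\delta\check\rho$ are now slightly perturbed; this is the one place requiring care, and it is handled in Step~3. Again, spatial convolution does not increase the action: writing $\rho^\tau_\kappa=\sum_\lambda G^{\delta,\tau}_{\kappa-\lambda}\rho^\eta_\lambda$ as a convex combination (after the harmless normalization $\delta\sum_\lambda G^{\delta,\tau}_\lambda=1$) and likewise for $\welo^\tau_\kappa$, joint convexity of $\frc{\cdot}{\cdot}$ gives $\frc{\welo^\tau_\kappa}{\rho^\tau_\kappa}\le\sum_\lambda G^{\delta,\tau}_{\kappa-\lambda}\,\frac{\delta\,\,\,}{}\!\frc{\welo^\eta_\lambda}{\rho^\eta_\lambda}$ (with the $\delta$-weights), and summing over $\kappa$ and integrating in $s$ yields $\daction(\rho^\tau,\welo^\tau)\le\daction(\rho^\eta,\welo^\eta)$.

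\emph{Step 3 (restoring the endpoints and taking limits).} After Steps~1--2 we have, for parameters $(\eta,\tau)$, a regular curve $(\rho^{\eta,\tau},\welo^{\eta,\tau})$ whose endpoints are $G^{\delta,\tau}\ast_\delta\hat\rho$ and $G^{\delta,\tau}\ast_\delta\check\rho$ rather than $\hat\rho,\check\rho$. To correct this, prepend and append short heat-flow (or linear-interpolation) bridges as in the proof of Proposition~\ref{prop:existence:curve}: on a time interval of length $\theta'$ near $s=0$ connect $\hat\rho$ to $G^{\delta,\tau}\ast_\delta\hat\rho$ by the discrete heat flow run for time $\tau$ (which is exactly $s\mapsto G^{\delta,s\tau/\theta'}\ast_\delta\hat\rho$ with $\welo=(\tau/\theta')G^{\delta,s\tau/\theta'}\ast_\delta\hat\rho$), and symmetrically near $s=1$; these bridges are themselves regular and their action is $O(\tau^2/\theta')$, which tends to $0$ as $\tau\to0$ for fixed $\theta'$ (or can be absorbed by letting $\theta'$ stay fixed while $\tau\to0$). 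Reparametrizing back to $[0,1]$, we obtain for each $\eps>0$ a regular curve $(\rho^\eps,\welo^\eps)\in\curves(\hat\rho,\check\rho)$ by choosing $\eta=\eta(\eps),\tau=\tau(\eps)\to0$ suitably. Weak convergence $(\rho^\eps,\welo^\eps)\rightharpoonup(\rho^*,\welo^*)$ in $\curves$ (Definition~\ref{dfn:weakC}) holds componentwise: temporal mollification and spatial heat-convolution both converge to the identity strongly on $L^2([0,1])$ for each fixed $\kappa$ as the parameters vanish, and the vanishing bridges occupy shrinking time intervals. Finally, Lemma~\ref{lem:lsc} gives $\daction(\rho^*,\welo^*)\le\liminf_\eps\daction(\rho^\eps,\welo^\eps)$, while Steps~1--2 together with the negligible bridge contributions give $\limsup_\eps\daction(\rho^\eps,\welo^\eps)\le\daction(\rho^*,\welo^*)$; hence $\daction(\rho^\eps,\welo^\eps)\to\daction(\rho^*,\welo^*)$, and since each step only decreased the action (up to the $o(1)$ bridge term), the convergence is from below as required.

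\textbf{Main obstacle.} The routine parts are the convexity/Jensen estimates showing the action does not increase and the weak-convergence bookkeeping. The genuinely delicate point is the simultaneous fulfilment of \emph{strict positivity} together with the neighbour-comparison bound~\eqref{eq:rhocompare} \emph{and} exact endpoint matching: spatial heat-smoothing is what buys~\eqref{eq:rhocompare} and positivity, but it disturbs the endpoints, so one must interleave it with corrective heat bridges whose action is controlled uniformly (here the quadratic-in-$\tau$ scaling of the bridge action, exactly as in Proposition~\ref{prop:existence:curve}, is what makes the argument close). Verifying that $G^{\delta,\tau}$ indeed satisfies a two-sided bound $c^{-1}G^{\delta,\tau}_\kappa\le G^{\delta,\tau}_{\kappa\pm1}\le c\,G^{\delta,\tau}_\kappa$ with $c=c(\delta,\tau)$ — which one extracts from~\eqref{eq:YIP} and the Gaussian kernel's own such property — is the small technical lemma underpinning~\eqref{eq:rhocompare}.
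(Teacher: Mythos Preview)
Your Step~2 has a genuine gap: the discrete heat kernel $G^{\delta,\tau}$ does \emph{not} satisfy a uniform neighbour bound $G^{\delta,\tau}_{m\pm1}\le C\,G^{\delta,\tau}_m$. From \eqref{eq:YIP} (or directly from the Bessel-function representation) one sees that $G^{\delta,\tau}_\kappa$ decays super-exponentially, roughly like $e^{-c\kappa^2}$, so the ratio $G^{\delta,\tau}_{m}/G^{\delta,\tau}_{m+1}$ blows up as $m\to+\infty$ (equivalently, $G^{\delta,\tau}_{m+1}/G^{\delta,\tau}_m\to\infty$ as $m\to-\infty$). Concretely, if $\rho^\eta$ is a Dirac mass at the origin, then $\rho^\tau_\kappa=G^{\delta,\tau}_\kappa$ and \eqref{eq:rhocompare} fails. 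The paper circumvents this by convolving in space with the \emph{exponential} kernel $K^{\delta,\eps}_\lambda=N^{\delta,\eps}e^{-|\lambda\delta|/\eps}$, whose key feature is precisely that $\sup_\lambda K^{\delta,\eps}_{\lambda+1}/K^{\delta,\eps}_\lambda=e^{\delta/\eps}<\infty$, so the bound \eqref{eq:rhocompare} follows immediately with $C=e^{\delta/\eps}$. Any kernel with exponentially decaying (rather than Gaussian) tails would do; the heat kernel is simply the wrong tool here.

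Two smaller points. First, your Step~1 reparametrisation (``strictly increasing bijection that is constant near the endpoints'') is self-contradictory, and in any case a genuine reparametrisation is not action-preserving for a non-constant-speed curve: squeezing into $[\theta,1-\theta]$ multiplies the action by $(1-2\theta)^{-1}$. The paper avoids this by simply extending $(\rho^*,\welo^*)$ by constants to all of $\R$ before mollifying in $s$. Second, your Step~3 is both unnecessary and problematic: the lemma only asks for $(\rho^\eps,\welo^\eps)\in\curves$, not $\in\curves(\hat\rho,\check\rho)$, so the endpoints are allowed to move (indeed, the application in Lemma~\ref{lem:lambda-contractive} explicitly works with the perturbed endpoints $\tilde\rho^\eps(0),\tilde\rho^\eps(1)$). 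Worse, your heat-flow bridge from $\hat\rho$ to $G^{\delta,\tau}\ast_\delta\hat\rho$ starts at $\hat\rho$ itself, which may have zeros, so the glued curve would fail the strict positivity required by Definition~\ref{dfn:regular} at $s=0$.
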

Before proving Lemma \ref{lem:smooth}, we formulate its main conclusion, which is an amendable sufficient differential relation.
\begin{lemma}[Sufficient condition for $\lambda$-contractive gradient flow]\label{lem:lambda-contractive}
    Consider parametrized pairs $(\rho(s,t),\welo(s,t))\in\Rp\times\R$ depending on $s\in(0,1)$ and $t\in(0,\tau)$ for some $\tau>0$ with the following properties:
    \begin{itemize}
        \item $(\rho(\cdot,t),\welo(\cdot,t))\in\curves$ is a regular curve in the sense of Definition \ref{dfn:regular} for each $t\in(0,\tau)$;
        \item $(\rho_\kappa(s,\cdot),\welo_\kappa(s,\cdot))$ is differentiable for each $s\in(0,1)$ and $\kappa\in\Z$;
        \item the following differential equations are satisfied:
        \begin{align}\label{eq:sideode}
	           \partial_t\rho(s,t) = s\ddff F(\rho(s,t)),\quad \partial_t\welo(s,t) = F(\rho(s,t))+s \mathrm{D}F(\rho(s,t))[\ddff\welo(s,t)],
        \end{align}
        where $F(\rho):=\rho\ddff\fnc'(\rho)$, and $\mathrm{D}F(\rho)[\xi]$ is $F$'s Fréchet derivative at $\rho$ in the direction $\xi$. 
    \end{itemize}
    If $\lambda\in\R$ is such that for any of the pairs considered above, the inequality 
    \begin{align}
	   \label{eq:keyode}
	   -\partial_t\left(\delta\sum_\kappa\frc{(\welo_\kappa(s,t))}{\rho_\kappa(s,t)}\right) 
	   \ge \lambda\left(\delta\sum_\kappa\frc{(\welo_\kappa(s,t))}{\rho_\kappa(s,t)}\right) 
	   + 2\partial_s\fnc(\rho(s,t))
    \end{align}
    holds at every $s\in(0,1)$ and every $t\in(0,\tau)$, then \eqref{eq:formalgf} defines a $\lambda$-contractive gradient flow.
\end{lemma}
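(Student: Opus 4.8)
The plan is to verify the integral characterization~\eqref{eq:eviii} of the evolutionary variational inequality on a dense class of pairs of endpoints and then to conclude by continuity; by~\cite{DaneriSavare} this characterization is equivalent to~\eqref{eq:evi}, and hence to $\lambda$-contractivity of $\sgrp$. So fix $\bar\rho$ in the dense set of initial data for which~\eqref{eq:formalgf} has a global, smooth, strictly positive solution $(\sgrp^t(\bar\rho))_{t\ge 0}$, fix $\bar\eta\in\dprbtwo$, and assume $\dst(\bar\rho,\bar\eta)<\infty$ --- otherwise the asserted inequality is void. By Lemma~\ref{lem:geodesic} there is a geodesic $(\rho^*,\welo^*)\in\curves(\bar\eta,\bar\rho)$, i.e.\ with $\daction(\rho^*,\welo^*)=\dst(\bar\rho,\bar\eta)^2$, joining $\bar\eta$ at $s=0$ to $\bar\rho$ at $s=1$; by Lemma~\ref{lem:smooth} I replace it by regular approximations $(\rho^\eps(\cdot,0),\welo^\eps(\cdot,0))\in\curves$ with $\daction(\rho^\eps(\cdot,0),\welo^\eps(\cdot,0))\le\dst(\bar\rho,\bar\eta)^2$, converging to $(\rho^*,\welo^*)$ (weakly in $\curves$, hence coordinatewise uniformly) and with actions tending to $\dst(\bar\rho,\bar\eta)^2$.

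Next I would build the perturbed flow by dragging each approximate geodesic with a copy of the gradient flow whose speed is rescaled by $s$: set
\[
  \rho^\eps(s,t):=\sgrp^{st}\bigl(\rho^\eps(s,0)\bigr),\qquad \welo^\eps(s,t):=(\ddff)^{-1}\partial_s\rho^\eps(s,t),
\]
where the inverse Laplacian is taken in the sense of Lemma~\ref{lem:dLaplace}. With $F(\rho)=\rho\,\ddff\fnc'(\rho)$, the $s$-rescaling turns~\eqref{eq:formalgf} into $\partial_t\rho^\eps=s\,\ddff F(\rho^\eps)$, and differentiating the identity $\ddff\welo^\eps=\partial_s\rho^\eps$ in $t$ together with the chain rule $\partial_s F(\rho)=\mathrm{D}F(\rho)[\partial_s\rho]=\mathrm{D}F(\rho)[\ddff\welo]$ shows that $(\rho^\eps,\welo^\eps)$ solves the side system~\eqref{eq:sideode}; moreover $(\rho^\eps(\cdot,t),\welo^\eps(\cdot,t))\in\curves$ for every $t$, since $\ddff\welo^\eps=\partial_s\rho^\eps$ holds by construction. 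One then checks that smoothness in $(s,t)$, strict positivity, and the comparison bound~\eqref{eq:rhocompare} persist on a time interval $[0,\tau_\eps)$, inherited from the corresponding properties at $t=0$ and from the positivity-preserving and regularizing behaviour of $\sgrp$ --- this is precisely where the standing structural hypotheses on $\fnc$ enter --- so that $(\rho^\eps(\cdot,t),\welo^\eps(\cdot,t))$ is one of the regular parametrized pairs to which~\eqref{eq:keyode} applies. By construction $\rho^\eps(0,t)\equiv\bar\eta^\eps:=\rho^\eps(0,0)$ is frozen in $t$ while $\rho^\eps(1,t)=\sgrp^t(\bar\rho^\eps)$ with $\bar\rho^\eps:=\rho^\eps(1,0)$, and $\bar\eta^\eps\to\bar\eta$, $\bar\rho^\eps\to\bar\rho$ as $\eps\searrow0$.

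The core estimate is then short. Writing $\mathcal A^\eps(t):=\daction(\rho^\eps(\cdot,t),\welo^\eps(\cdot,t))=\int_0^1\delta\sum_\kappa\frc{(\welo^\eps_\kappa(s,t))}{\rho^\eps_\kappa(s,t)}\dd s$, which is a differentiable function of $t$ by the regularity above, I would integrate~\eqref{eq:keyode} over $s\in(0,1)$ and use $\int_0^1\partial_s\fnc(\rho^\eps(s,t))\dd s=\fnc(\sgrp^t\bar\rho^\eps)-\fnc(\bar\eta^\eps)$ to get $-\tfrac{\dd}{\dn t}\mathcal A^\eps(t)\ge\lambda\,\mathcal A^\eps(t)+2\bigl[\fnc(\sgrp^t\bar\rho^\eps)-\fnc(\bar\eta^\eps)\bigr]$; multiplying by $e^{\lambda t}$ and integrating over $[0,t]$ then yields
\[
  e^{\lambda t}\mathcal A^\eps(t)-\mathcal A^\eps(0)\ \le\ 2\int_0^t e^{\lambda r}\bigl[\fnc(\bar\eta^\eps)-\fnc(\sgrp^r\bar\rho^\eps)\bigr]\dd r .
\]
Since $(\rho^\eps(\cdot,t),\welo^\eps(\cdot,t))$ joins $\bar\eta^\eps$ to $\sgrp^t\bar\rho^\eps$ we have $\mathcal A^\eps(t)\ge\dst(\sgrp^t\bar\rho^\eps,\bar\eta^\eps)^2$, and $\mathcal A^\eps(0)\le\dst(\bar\rho,\bar\eta)^2$ by the choice of approximation, whence
\[
  e^{\lambda t}\,\dst(\sgrp^t\bar\rho^\eps,\bar\eta^\eps)^2-\dst(\bar\rho,\bar\eta)^2\ \le\ 2\int_0^t e^{\lambda r}\bigl[\fnc(\bar\eta^\eps)-\fnc(\sgrp^r\bar\rho^\eps)\bigr]\dd r .
\]

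Finally I would let $\eps\searrow0$: using $\bar\rho^\eps\to\bar\rho$, $\bar\eta^\eps\to\bar\eta$, continuity of $\sgrp$, $\fnc$ and $\dst$ along these sequences, and dominated convergence in the $r$-integral, the inequality passes to the limit as $e^{\lambda t}\dst(\sgrp^t\bar\rho,\bar\eta)^2-\dst(\bar\rho,\bar\eta)^2\le 2\int_0^t e^{\lambda r}[\fnc(\bar\eta)-\fnc(\sgrp^r\bar\rho)]\dd r$ for small $t>0$; by the semigroup property this propagates to all $t>0$, and by density of the admissible $\bar\rho$ (with continuity of $\sgrp$, $\fnc$ and $\dst$) to all $\bar\rho,\bar\eta\in\dprbtwo$, which is exactly~\eqref{eq:eviii}. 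The integration of~\eqref{eq:keyode} in $s$ and $t$ is routine; the real obstacle is the construction step --- showing that the $s$-rescaled gradient flow of a regular curve stays regular on a time interval, in particular that the $\kappa$-uniform comparison~\eqref{eq:rhocompare} is not destroyed by the evolution, and gathering enough quantitative control (a life-span not collapsing as $\eps\searrow0$, and continuity of $\dst$ along the approximating sequences) to remove the regularization. Both rely on well-posedness, positivity preservation and smoothing of the gradient-flow semigroup $\sgrp$.
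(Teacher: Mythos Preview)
Your proposal is correct and follows essentially the same route as the paper's proof: both invoke the Daneri--Savar\'e integral characterization of EVI, pick a geodesic between the endpoints via Lemma~\ref{lem:geodesic}, regularize it via Lemma~\ref{lem:smooth}, extend to a two-parameter family by setting $\rho^\eps(s,t)=\sgrp^{st}(\rho^\eps(s,0))$ so that \eqref{eq:sideode} holds, integrate \eqref{eq:keyode} first in $s$ and then in $t$ after multiplication by $e^{\lambda t}$, and finally pass to the limit $\eps\searrow0$. Your treatment is in fact slightly more explicit than the paper's --- you spell out $\welo^\eps(s,t)=(\ddff)^{-1}\partial_s\rho^\eps(s,t)$ and candidly flag the genuine technical point (persistence of regularity, in particular of the comparison bound~\eqref{eq:rhocompare}, under the $s$-rescaled flow) that the paper passes over with ``one easily checks''; the only cosmetic difference is that the paper uses the energy-diminishing property to replace your $\int_0^t e^{\lambda r}[\fnc(\bar\eta^\eps)-\fnc(\sgrp^r\bar\rho^\eps)]\dd r$ by $\frac{e^{\lambda t}-1}{\lambda}[\fnc(\bar\eta^\eps)-\fnc(\sgrp^t\bar\rho^\eps)]$, and in the limit step one should invoke \emph{lower} semi-continuity of $\dst$ rather than continuity.
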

\begin{proof}[Proof of Lemma~\ref{lem:smooth}]
	We extend $(\rho^*,\welo^*)$ to arbitrary $s\in\R$ by defining $(\rho^*(s),\welo^*(s))=(\rho^*(0),0)$ for $s<0$, and $(\rho^*(s),\welo^*(s))=(\rho^*(1),0)$ for $s>1$, respectively. These extensions are in $H^1_\text{loc}(\R)\times L^2(\R)$, respectively, and the continuity equation \eqref{eq:dcont} then holds at almost any $s\in\R$.
	
	The approximations are now obtained by convolution of both $\rho^*$ and $\welo^*$. We combine two convolutions, one in (discrete) space to enforce positivity of $\rho$, and one in (continuous) time to enforce smoothness. For the convolution in space, we use an exponential kernel $K^{\delta,\eps}$ given by
	\begin{align*}
		K^{\delta,\eps}_\lambda = N^{\delta,\eps}\exp\big(-|\kappa\delta|/\eps\big), \quad \text{with}\quad N^{\delta,\eps} = \left(\frac2{1-\exp(-\delta/\eps)}-1\right)^{-1}.
	\end{align*}
	With the $\delta$-convolution from \eqref{eq:dconvolve}, we define 
	\begin{align*}
		\hat\rho^\eps(s)
		= K^{\delta,\eps}\ast_\delta \rho^*(s),
		\quad
		\hat\welo^\eps(s)
		= K^{\delta,\eps}\ast_\delta \welo^*(s).
	\end{align*}
	For convolution in time, we use compactly supported smooth non-negative and unit-mass mollifiers $\chi^\eps$ of the usual form $\chi^\eps(s) = \eps^{-1}\chi(\eps^{-1}s)$,
	\begin{align*}
		\rho^\eps_\kappa(s) = \chi^\eps\ast_s\hat\rho^\eps_\kappa(s) = \int_{\R} \chi^\eps(s-s')\hat\rho^\eps_\kappa(s')\dd s', 
		\quad
		\welo^\eps_\kappa(s) = \chi^\eps\ast_s\hat\rho^\eps_\kappa(s) = \int_{\R} \chi^\eps(s-s')\hat\rho^\eps_\kappa(s')\dd s'.
	\end{align*}
	It is easily seen that the continuity equation \eqref{eq:dcont} is preserved, and so are the total mass and the continuity of the second moment. $\rho^\eps$'s and $\welo^\eps$'s smoothness in time is an immediate consequence of $\chi^\eps$'s smoothness, $\rho^\eps$'s positivity is a consequence of $K^{\delta,\eps}$'s positivity. Weak convergence in $\curves$ follows by standard arguments for approximation of Sobolev functions by convolution, and since the discrete heat flow is uniformly Lipschitz continuous in $\delltwo$ up to the initial time. The bound  \eqref{eq:rhocompare} on the quotients of $\rho_\kappa$ follows from
	\begin{align*}
		\frac{\hat\rho^\eps_{\kappa+1}}{\hat\rho^\eps_\kappa}
		= \frac{\delta\sum_\lambda K^{\delta,\eps}_{\lambda+1}\rho^*_{\kappa-\lambda}}{\delta\sum_\lambda K^{\delta,\eps}_{\lambda}\rho^*_{\kappa-\lambda}}
		= \frac{\delta\sum_\lambda \frac{K^{\delta,\eps}_{\lambda+1}}{K^{\delta,\eps}_\lambda}K^{\delta,\eps}_\lambda\rho^*_{\kappa-\lambda}}{\delta\sum_\lambda K^{\delta,\eps}_\lambda\rho^*_{\kappa-\lambda}}
		\le \sup_\lambda\frac{K^{\delta,\eps}_{\lambda+1}}{K^{\delta,\eps}_\lambda}
		= \exp(\delta/\eps) =: C_\eps.
	\end{align*}
	The subsequent convolution in time preserves this inequality, so the bound is inherited by $\rho^\eps_{\kappa+1}/\rho^\eps_\kappa$. Finally, the approximation of the action follows by Jensen's inequality on the one hand,
	\begin{align*}
		\daction(\rho^\eps,\welo^\eps)
		&= \delta\sum_\kappa\int_0^1 \frac{[\chi^\eps\ast_t\hat\welo^\eps_\kappa](s)^2}{(\chi^\eps\ast_t\hat\rho^\eps_\kappa)(s)}\dd s 
		\le \delta\sum_\kappa\int_\R \frac{[\chi^\eps\ast_t\hat\welo^\eps_\kappa](s)^2}{(\chi^\eps\ast_t\hat\rho^\eps_\kappa)(s)}\dd s \\
		&\le \delta\sum_\kappa\int_\R \chi^\eps\ast_t\frc{(\hat\welo^\eps_\kappa)}{\hat\rho^\eps_\kappa}(s)\dd s 
		= \delta\sum_\kappa\int_\R \frac{[K^{\delta,\eps}\ast_\delta\welo^*(s)]_\kappa^2}{[K^{\delta,\eps}\ast_\delta\rho^*(s)]_\kappa}\dd s \\
		&\le \delta\sum_\kappa\int_\R K^{\delta,\eps}\ast_\delta\frc{\welo^*_\kappa(s)}{\rho^*_\kappa(s)}
		=\daction(\rho^*,\welo^*),
	\end{align*}
	and by lower semi-continuity of the action, see Lemma \ref{lem:lsc}, on the other hand. In combination, these force
	\begin{align*}
		\limsup_\eps \daction(\rho^\eps,\welo^\eps) \le \daction(\rho^*,\welo^*) \le \liminf_\eps \daction(\rho^\eps,\welo^\eps),
	\end{align*}
	and therefore convergence of $\daction(\rho^\eps,\welo^\eps)$ to $\daction(\rho^*,\welo^*)$.
\end{proof}

\begin{proof}[Proof of Lemma~\ref{lem:lambda-contractive}]
		We use the equivalence of the definition~\eqref{eq:evi} to the time-integrated version from \cite[Proposition 3.1]{DaneriSavare}, stating that a flow $\sgrp$ is $\lambda$-contractive with $\lambda<0$ iff 
	\begin{align}
		\label{eq:evii}
		\frac{e^{\lambda t}}2\dst\big(\sgrp^t(\bar\rho),\bar\eta\big)^2 - \frac12\dst\big(\bar\rho,\bar\eta\big)^2 
		\le \frac{e^{\lambda t}-1}{\lambda}\big[\fnc(\bar\eta)-\fnc\big(\sgrp^t(\bar\rho)\big)\big]
	\end{align}
	holds for all $\bar\rho$, $\bar\eta$ and all $t\ge0$.
	
	To verify~\eqref{eq:evii}, pick for given $\bar\rho$ and $\bar\eta$ a geodesic $(\tilde\rho,\tilde\welo)\in\curves(\bar\rho,\bar\eta)$, i.e., an action minimizer connecting $\tilde\rho(0)=\bar\eta$ to $\tilde\rho(1)=\bar\rho$, which exists by Lemma \ref{lem:geodesic}. For each $\eps>0$, consider a smooth approximation $(\tilde\rho^\eps,\tilde\welo^\eps)\in\curves$ according to Lemma \ref{lem:smooth}. Next, extend the single curve $(\tilde\rho^\eps,\tilde\welo^\eps)$ to an entire family $(\rho^\eps(s,t),\welo^\eps(s,t))$ with $s\in[0,1]$ and $t\ge0$ such that $(\rho^\eps(s,0),\welo^\eps(s,0))=(\tilde\rho^\eps(s),\tilde\welo^\eps(s))$, such that $(\rho^\eps(\cdot,t),\welo^\eps(\cdot,t))\in\curves$, and such that $\rho^\eps(s,t)=\sgrp^{st}(\rho^\eps(s))$ for all $s\in[0,1]$ and $t\ge0$. Note that $\rho^\eps(0,t)=\tilde\rho^\eps(0)$ is independent of $t$, and $\rho^\eps(1,t)=\sgrp^t(\tilde\rho^\eps(0))$ ``follows the flow''. One easily checks that this double-parametric family $(\rho^\eps(s,t),\welo^\eps(s,t))$ is admissible in Lemma \ref{lem:lambda-contractive}.

The task is then to show \eqref{eq:evii} with $\tilde\rho^\eps(1)$ in place of $\bar\rho$ and $\tilde\rho^\eps(0)$ in place of $\bar\eta$, which can equivalently be written as
\begin{align}
    \label{eq:eviii}
    \frac{e^{\lambda t}}2\dst\big(\rho^\eps(1,t),\rho^\eps(0,t)\big)^2 - \frac12\dst\big(\rho^\eps(1,0),\rho^\eps(0,0)\big)^2 
    \le \frac{e^{\lambda t}-1}{\lambda}\big[\fnc(\rho^\eps(0,t))-\fnc\big(\rho^\eps(1,t)\big)\big]
\end{align}
Once \eqref{eq:eviii} has been achieved, \eqref{eq:evii} follows easily by taking the limit $\eps\searrow0$, thanks to the properties of the $\eps$-approximation, see Lemma \ref{lem:smooth}, thanks to lower semi-continuity of $\dst$ and of $\fnc$, and thanks to the continuity of the semigroup. For proving \eqref{eq:eviii}, it suffices to show that
\begin{align}
    \label{eq:eviv}
    \frac{e^{\lambda t}}2\int_0^1\delta\sum_\kappa\frc{(\welo_\kappa^\eps(s,t))}{\rho^\eps_\kappa(s,t)}\dd s
    - \frac12 \int_0^1\delta\sum_\kappa\frc{(\welo^\eps_\kappa(s,0))}{\rho^\eps_\kappa(s,0)}\dd s
    \le \frac{e^{\lambda t}-1}{\lambda}\big[\fnc(\rho^\eps(0,t))-\fnc\big(\rho^\eps(1,t)\big)\big]
\end{align}
using the definition of $\dst$ as an infimum, and the fact that $(\rho^\eps(\cdot,0),\welo^\eps(\cdot,0)$ is a geodesic. By the energy-diminishing property of the gradient flow, $\fnc(\rho^\eps(s,t))$ is a non-increasing function of $t$. It is then elementary to deduce \eqref{eq:eviv} from the differential-in-$t$ version
\begin{align*}
    \frac12\partial_t\left(\int_0^1\delta\sum_\kappa\frc{(\welo^\eps_\kappa(s,t))}{\rho^\eps_\kappa(s,t)}\dd s\right) 
    +\frac\lambda 2\left(\int_0^1\delta\sum_\kappa\frc{(\welo^\eps_\kappa(s,t))}{\rho^\eps_\kappa(s,t)}\dd s\right) 
    \le \fnc(\rho^\eps(0,t))-\fnc\big(\rho^\eps(1,t)\big);
\end{align*}
simply multiply by $e^t$ and integrate with respect to $t$. Finally, since $(\rho^\eps(s,t),\welo^\eps(s,t))$ is smooth with respect to both $s\in(0,1)$ and $t>0$, and the curves $\rho^\eps(\cdot,t)$ are regular, it is clearly sufficient to verify the differential-in-$s$-and-$t$ variant~\eqref{eq:keyode}.
\end{proof}

\subsection{Inhomogeneous diffusion}
Consider the diffusive gradient flow from \eqref{eq:dFextern}, induced by the functional (note the sign)
\begin{align*}
    \fnc(\rho) = -\delta\sum_\kappa V^\delta_\kappa\rho_\kappa.
\end{align*}
Here $V^\delta:\Z\to\R$ can be understood as an approximation to an external potential $V\in C^4(\R)$ with the correspondence $V^\delta_\kappa\approx V(\kappa\delta)$ (cp. with Proposition~\ref{prp:formal:convexity}, part~\ref{itm:potconvexity}). Since
\[\ \frac{\partial\fnc}{\partial\rho_\kappa} \equiv -V^\delta_\kappa \]
independently of $\rho$, the associated gradient flow equation \eqref{eq:formalgf} amounts to
\begin{align}
    \label{eq:gf.1}
    \dot\rho_\kappa = \ddff_\kappa(\rho\,\ddff V^\delta).
\end{align}
We begin by proving that \eqref{eq:gf.1} indeed defines a semigroup. 
\begin{lemma}%
    \label{lem:semigroup}
    Let a bounded $A:\Z\to\R$ be given. Then the solution of the (infinite) system of ordinary differential equations
	\begin{align*}
		\dot\rho_\kappa = \ddff_\kappa(A\rho)
	\end{align*}
	defines a smooth semigroup on $\dprbtwo$.
\end{lemma}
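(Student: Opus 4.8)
The statement asserts that the linear, constant-coefficient (in time) infinite ODE system $\dot\rho_\kappa = \ddff_\kappa(A\rho)$ with bounded $A$ generates a smooth semigroup on the Banach space $\delltwo$. The natural approach is to view the right-hand side as a bounded linear operator $L\colon \delltwo\to\delltwo$, so that $\sgrp^t := e^{tL}$ is defined by the norm-convergent exponential series, and then to check the three things one needs: (i) $L$ maps $\delltwo$ into itself and is bounded, so the semigroup is well-defined, smooth (indeed analytic) in $t$, and satisfies $\tfrac{\dd}{\dn t}\sgrp^t = L\sgrp^t$; (ii) the flow preserves the defining constraints of $\dprbtwo$, namely total mass, vanishing first moment, and non-negativity; (iii) uniqueness among reasonable solutions.

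For step (i), write $L\rho = \ddff(A\rho)$ and note $(A\rho)_\kappa = A_\kappa\rho_\kappa$ with $|A_\kappa|\le\|A\|_\infty$, so $A\rho\in\ell^1$ with $\|A\rho\|_{\ell^1}\le\|A\|_\infty\|\rho\|_{\ell^1}$; moreover $\ddff$ is bounded on $\ell^1$ (with operator norm $\le 4/\delta^2$) and, by the elementary estimate $(\delta\kappa)^2\le 2(\delta(\kappa\pm1))^2+2\delta^2$, also bounded on the weighted space $\delltwo$ in the $\|\cdot\|_{\ell^1_2}$ norm. Hence $L$ is a bounded operator on $\delltwo$, $e^{tL}=\sum_{n\ge0}t^nL^n/n!$ converges in operator norm, and all the semigroup and differentiability properties are automatic from standard Banach-space theory. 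Smoothness ``on $\dprbtwo$'' should be read as: $t\mapsto\sgrp^t\rho$ is smooth (real-analytic) as a curve in $\delltwo$ for each $\rho$, and $\sgrp^t$ maps $\dprbtwo$ into itself.

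Step (ii) is the part requiring actual (but routine) work. Conservation of total mass and of the first moment follow by differentiating $\delta\sum_\kappa\rho_\kappa(t)$ and $\delta\sum_\kappa(\delta\kappa)\rho_\kappa(t)$ and using the discrete integration-by-parts lemma together with $\ddff 1 = \ddff(\delta\kappa) = 0$ (absolute summability of $A\rho$ and its slow-growth weights lets us apply that lemma). Non-negativity is the one genuinely substantive point: I would argue it via the discrete maximum principle, rewriting the ODE in the form $\dot\rho_\kappa = \tfrac1{\delta^2}(A_{\kappa+1}\rho_{\kappa+1}+A_{\kappa-1}\rho_{\kappa-1}) - \tfrac{2A_\kappa}{\delta^2}\rho_\kappa$; passing to the gauged variable $\sigma_\kappa := e^{ct}\rho_\kappa$ with $c$ chosen so that $c - 2A_\kappa/\delta^2\ge0$ (possible since $A$ is bounded), one gets $\dot\sigma_\kappa = (\text{nonneg. off-diagonal}) + (\text{nonneg.})\sigma_\kappa$, a quasi-positive system, and then a standard Grönwall/barrier argument — or Duhamel against the nonnegative off-diagonal part — shows $\sigma(0)\ge0 \Rightarrow \sigma(t)\ge0$ for all $t\ge0$. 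I expect this preservation-of-positivity argument to be the main obstacle, mainly in handling the infinite-dimensional nature carefully (the comparison principle has to be justified on $\ell^1$, e.g. by truncation or by the Duhamel series with the manifestly positivity-preserving semigroup $e^{t\ddff}$ as the leading term).

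For step (iii), uniqueness within the class of $\delltwo$-valued solutions is immediate from boundedness of $L$ (Picard–Lindelöf in the Banach space, or simply Grönwall applied to the difference of two solutions). Finally, one records that the semigroup property $\sgrp^{t+s}=\sgrp^t\sgrp^s$ and strong continuity at $t=0$ are built into $e^{tL}$, and that smoothness in $t$ (all derivatives given by $L^n\sgrp^t$) completes the claim. The lemma is then applied in the following subsection with $A = \ddff V^\delta$, whose positivity and boundedness are exactly the hypotheses imposed there; note that for the positivity-preservation step above it is the boundedness of $A$ that matters, while the positivity of $A=\ddff V^\delta$ is what will later enter the contractivity estimate rather than this existence result.
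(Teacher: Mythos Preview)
Your approach is essentially the same as the paper's: both reduce the problem to boundedness of the linear operator $L\rho=\ddff(A\rho)$ on the weighted space $\delltwo$, using the elementary inequality $1+(\delta\kappa)^2\le C\bigl(1+(\delta(\kappa\pm1))^2\bigr)$, and then invoke standard Banach-space ODE theory. The paper's proof stops after the Lipschitz estimate (``it suffices to verify\ldots''), while you additionally spell out the invariance of $\dprbtwo$ --- conservation of mass and first moment via integration by parts, and positivity via the quasi-positive/Duhamel argument --- which the paper leaves implicit; your added steps are correct and indeed needed for the semigroup to live on $\dprbtwo$ rather than merely on $\delltwo$.
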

\begin{proof}
    It suffices to verify that $\rho\mapsto\ddff(A\rho)$ is a globally Lipschitz continuous map from $\dprbtwo$ to $\delltwo$. Indeed, a rough estimate provides 
	\begin{multline*}
		[1+(\delta\kappa)^2]\big|\ddff_\kappa(A\rho)-\ddff_\kappa(A\eta)\big| \\
		\le \delta^{-2}[1+(\delta\kappa)^2]\big|A_{\kappa+1}(\rho_{\kappa+1}-\eta_{\kappa+1})+A_{\kappa-1}(\rho_{\kappa-1}-\eta_{\kappa+1})-2A_\kappa(\rho_\kappa-\eta_\kappa)\big| \\
		\shoveright{\le \frac{1+\delta^2}{\delta^2}\|A\|_{\ell^\infty}|\big([1+(\delta(\kappa+1))^2]|\rho_{\kappa+1}-\eta_{\kappa+1}|+[1+(\delta(\kappa-1))^2]|\rho_{\kappa-1}-\eta_{\kappa-1}|} \\ 
        +2[1+(\delta\kappa)^2]|\rho_\kappa-\eta_\kappa|\big),
	\end{multline*}
    where we have used the elementary bound
    \begin{align*}
        1+(\delta\kappa)^2
        \le (1+\delta^2)[1+\big(\delta(\kappa\pm1)\big)^2],
    \end{align*}
    for all $\kappa\in\Z$. We thus arrive at the Lipschitz estimate $\|\ddff(A\rho)-\ddff(A\eta)\|_{\delltwo}\le8\|A\|_{\ell^\infty}\delta^{-2}\|\rho-\eta\|_{\delltwo}$.
\end{proof}
\begin{proof}[Proof of Theorem~\ref{thm:contractivity}, part \ref{lbl:contractV}]
    We proceed as outlined in Section \ref{sct:lambdacontract} above. Let $\rho^\eps(s,t)$ be a smooth family of densities, subject to \eqref{eq:sideode}, which in this specific situation is
    \begin{align*}
        \partial_t\rho^\eps = s\ddff(\rho^\eps\,\ddff V^\delta),
        \quad
        \partial_t\welo^\eps = \ddff V^\delta\,\big(\rho^\eps+s\ddff\welo^\delta).
    \end{align*}
    We need to show \eqref{eq:keyode}, which is
    \begin{align}
        \label{eq:key.1}
        -\partial_t\left(\delta\sum_\kappa\frc{(\welo^\eps_\kappa(s,t))}{\rho^\eps_\kappa(s,t)}\right) 
        \ge \lambda\left(\delta\sum_\kappa\frc{(\welo^\eps_\kappa(s,t))}{\rho^\eps_\kappa(s,t)}\right) 
        - 2\delta\sum_\kappa \ddff_\kappa\welo^\eps(s,t)\,V^\delta_\kappa.
    \end{align}
    To enhance readability, we suppress the explicit dependence on $s$ and $t$ as well as the regularization parameter $\eps>0$ in the following. For a fixed integer $K\ge1$, we obtain    
    \begin{subequations}
        \begin{align}
            \nonumber
            -\partial_t\left(\delta\sum_{|\kappa|\le K} \frac{\welo^2}{\rho}\right)
            \nonumber
            &= \delta\sum_{|\kappa|\le K}\left[\left(\frac \welo\rho\right)^2\partial_t\rho - 2\frac\welo\rho \partial_t\welo\right] \\
            \nonumber
            &= s\,\delta\sum_{|\kappa|\le K}\left[
                \left(\frac \welo\rho\right)^2\ddff(\rho\ddff V^\delta)
                -2\ddff V^\delta\frac \welo\rho\ddff \welo
            \right] 
            -2\delta\sum_{|\kappa|\le K} \welo\ddff V^\delta \\
             \label{eq:evicorebc}
            &= s\bdry{\left(\frac \welo\rho\right)^2}{\rho\ddff V^\delta} 
            -2s\bdry{\ddff V^\delta\frac \welo\rho}{\welo}
            - 2\bdry{\welo}{V^\delta} \\ 
            \label{eq:evicorequad}
            &\qquad + s\,\delta\sum_{|\kappa|\le K}\left[
                \rho\ddff V^\delta\ddff\left(\left(\frac \welo\rho\right)^2\right)
                -2\welo\ddff\left(\ddff V^\delta\frac \welo\rho\right)
            \right]
            -2\delta\sum_{|\kappa|\le K} V^\delta\ddff \welo.
        \end{align}
    \end{subequations}
    The terms in \eqref{eq:evicorebc} and \eqref{eq:evicorequad} are now discussed individually. 
    Concerning \eqref{eq:evicorebc}: first, recall that by the assumed boundedness of $\ddff V^\delta$, there is a constant $C_V$ such that
    \begin{align}\label{eq:CV}
        0<\ddff_\kappa V^\delta\le C_V, \quad -C_V \le V^\delta_\kappa \le C_V\big(1+(\delta\kappa)^2\big).
    \end{align}
    In combination with the regularity estimate \eqref{eq:rhocompare}, this allows the following estimate on one of the four terms that contribute to $\bdry{\big(\frac \welo\rho\big)^2}{\rho\ddff V^\delta}$:
    \begin{align*}
        0\le\frac{\welo_K^2}{\rho_K^2}\rho_{K+1}\ddff_{K+1}V^\delta
        = \frac{\rho_{K+1}}{\rho_K}\frac{\welo_K^2}{\rho_K}\ddff_{K+1}V^\delta
        \le C_V C_\eps\frac{\welo_K^2}{\rho_K}.
    \end{align*}
    The upper bound is summable in $K$. Therefore, these expressions converges to zero as $K\to\infty$. The same is true for the other three terms contributing to $\bdry{\big(\frac \welo\rho\big)^2}{\rho\ddff V^\delta}$. A similar argument applies to $\bdry{\ddff V^\delta\frac \welo\rho}{\welo}$, where one of the four contributions can be estimated as follows:
    \begin{align*}
        \left|\ddff_K V^\delta\frac{\welo_K\welo_{K+1}}{\rho_K}\right|
        \le \ddff_K V^\delta\sqrt{\frac{\rho_{K+1}}{\rho_K}}\frac{\welo_K}{\sqrt{\rho_K}}\frac{\welo_{K+1}}{\sqrt{\rho_{K+1}}}
        \le \frac{C_V\sqrt{C_\eps}}2\left[\frac{\welo_K^2}{\rho_K}+\frac{\welo_{K+1}^2}{\rho_{K+1}}\right].
    \end{align*}
    Again, the bound on the right is summable in $K$, and thus forces the expression on the left to converge to zero for $K\to\infty$. A control on the last boundary term $\bdry{\welo}{V^\delta}$ is a bit more difficult to obtain: first, note that
    \begin{align*}
        \delta\sum_\kappa (1+|\delta\kappa|)|\welo_\kappa| 
        \le \left(\delta\sum_\kappa \frac{\welo_\kappa^2}{\rho_\kappa}\right)^{1/2}\left(\delta\sum_\kappa(1+|\delta\kappa|)^2\rho_\kappa\right)^{1/2},
    \end{align*}
    which is finite since the action and $\rho$'s second moment are finite. Therefore, $(1+|\delta\kappa|)|\welo_\kappa|$ is summable. Combine this with
    \begin{align*}
        \big(1+|\delta K|\big)^{-1}\big|\welo_KV^\delta_{K+1}\big|
        \le C_V\big(1+|\delta K|\big)|\welo_K|.
    \end{align*}
    The right-hand side above is summable, but $(1+|\delta K|)^{-1}$ on the left-hand side is \emph{not} summable. Therefore, there must exist a sequence $K_n\to\infty$ such that $\big|\welo_{K_n}V^\delta_{K_n+1}\big|\to0$. It is easily seen that $(K_n)_n$ can be chosen such that $\bdry[{K_n}]{\welo}{V^\delta}\to0$.

    Now for \eqref{eq:evicorequad}:
    the second sum therein converges to the second sum on the right-hand side of \eqref{eq:key.1} for $K_n\to\infty$, as desired. The first sum is estimated $\kappa$-wise, with the goal to verify the lower bound $\lambda\welo_\kappa^2/\rho_\kappa$. Expanding the two terms inside the sum by means of  the product rule \eqref{eq:product} for the discrete Laplacian, we obtain:
    \begin{align*}
        \rho\ddff V^\delta\ddff\left(\left(\frac \welo\rho\right)^2\right)
        &= \rho\left\{2\ddff V^\delta\frac \welo\rho\ddff\left(\frac \welo\rho\right)+\ddff V^\delta\left(\left[\delta_+\left(\frac \welo\rho\right)\right]^2+\left[\delta_-\left(\frac \welo\rho\right)\right]^2\right)\right\}, \\
        2\welo\ddff\left(\ddff V^\delta\frac \welo\rho\right)
        &= \rho\Bigg\{
        2(\ddff\ddff V^\delta)\left(\frac \welo\rho\right)^2 + 2\ddff V^\delta\,\frac \welo\rho\ddff\left(\frac \welo\rho\right) \\ 
        &\qquad +  2\left((\delta_+ \ddff V^\delta) \delta_+\left(\frac \welo\rho\right) + (\delta_-\ddff V^\delta)\delta_-\left(\frac \welo\rho\right)\right)\frac \welo\rho \Bigg\}.
    \end{align*}
    Consequently,
    \begin{align*}
        &\rho\ddff V^\delta\ddff\left(\left(\frac \welo\rho\right)^2\right) 
        -2\welo\ddff\left(\ddff V^\delta\frac \welo\rho\right) \\
        &\quad = \rho\left\{
        \ddff V^\delta\left[\delta_+\left(\frac \welo\rho\right)\right]^2
        -2(\delta_+\ddff V^\delta)\frac \welo\rho\delta_+\left(\frac \welo\rho\right)
        -(\ddff\ddff V^\delta)\left(\frac \welo\rho\right)^2
        \right\} \\
        &\qquad + \rho\left\{
        \ddff V^\delta\left[\delta_-\left(\frac \welo\rho\right)\right]^2
        -2(\delta_-\ddff V^\delta)\frac \welo\rho\delta_-\left(\frac \welo\rho\right)
        -(\ddff\ddff V^\delta)\left(\frac \welo\rho\right)^2
        \right\}.
    \end{align*}
    The expressions inside the curly brackets are two-homogeneous polynomials in the variables $\welo/\rho$ and $\delta_\pm(\welo/\rho)$. Generally, for given real coefficients $a$, $b$ and $c$ with $a>0$, one has
    \begin{align}
        \label{eq:elementary.1}
        au^2+2buv+cv^2 \ge \left(c-\frac{b^2}a\right)v^2
        \quad \text{for all real $u$ and $v$},
    \end{align}
    and therefore
    \begin{align*}
        \rho\ddff V^\delta\ddff\left(\left(\frac \welo\rho\right)^2\right) 
        -2\welo\ddff\left(\ddff V^\delta\frac \welo\rho\right)
        &\ge -\left(2\ddff\ddff V^\delta+\frac{(\delta_+\ddff V^\delta)^2+(\delta_-\ddff V^\delta)^2}{\ddff V^\delta}\right)\,\frac{\welo^2}\rho \\
        &\qquad = - \frac{\ddff\big[(\ddff V^\delta)^2\big]}{\ddff V^\delta}\,\frac{\welo^2}\rho .
    \end{align*}
    The quantity $\Lambda$ defined in \eqref{eq:lambda.1} is obviously an upper bound on the expression in round brackets on the right-hand side. We have thus established the validity of \eqref{eq:key.1}, finishing the proof.
\end{proof}

%

\subsection{Diffusion with convolution kernel}
We consider the functional (again, note the sign)
\begin{align*}
    \fnc(\rho) = -\frac{\delta}2\sum_\kappa \rho(W^\delta\ast_\delta\rho)
    = -\frac{\delta^2}2\sum_{\kappa,\lambda} W^\delta_{\kappa-\lambda}\rho_\kappa\rho_\lambda.
\end{align*}
Here $W^\delta:\Z\to\R$ should be understood as an approximation to an interaction kernel $W\in C^4(\R)$. Assuming symmetry, $W_{-\kappa}=W_\kappa$, we obtain
\begin{align}
    \label{eq:varyinteract}
    \frac{\partial\fnc}{\partial\rho_\kappa} = - \big[W^\delta\ast_\delta\rho\big]_\kappa = -\delta\sum_\lambda W^\delta_{\kappa-\lambda}\rho_\lambda.
\end{align}
Thus, the associated gradient flow equation amounts to
\begin{align}
    \label{eq:gf.2}
    \dot\rho_\kappa = \ddff\big(\rho\,\ddff\big[W^\delta\ast_\delta\rho]\big).
\end{align}
To show to \eqref{eq:gf.2} gives rise to a continuous semigroup on $\dprbtwo$, one proceeds in analogy to the proof of Lemma \ref{lem:semigroup}, deriving a $\delta$-dependent Lipschitz constant on the right-hand side. 
\begin{proof}[Proof of Theorem~\ref{thm:contractivity}, part \ref{lbl:contractW}]
    We proceed as in the proof for Theorem~\ref{thm:contractivity}, part \ref{lbl:contractV} above. The approximating family from~\eqref{eq:sideode} takes the form
    \begin{align*}
        \partial_t\rho^\eps = s\ddff(\rho^\eps\ddff W^\delta\ast_\delta\rho^\eps),
        \quad
        \partial_t\welo^\eps = 
        \rho^\eps\ddff W^\delta\ast_\delta\rho^\eps 
        + s\ddff\welo^\eps W^\delta\ast_\delta\rho^\eps
        + s \rho^\eps\ddff W^\delta\ast_\delta\ddff\welo^\eps. 
    \end{align*}
    We thus obtain, again omitting all unnecessary indices, and using that $\ddff(f\ast g)=\ddff f\ast g=f\ast\ddff g$:
    \begin{align}
        \MoveEqLeft\nonumber
        -\partial_t\bra[\bigg]{\delta\sum_\kappa\frac{\welo^2}{\rho}}
        = \delta\sum_\kappa\pra[\bigg]{\bra[\bigg]{\frac\welo\rho}^2\partial_t\rho-2\frac\welo\rho\partial_t\welo} \\
        \nonumber
        &=s\delta\sum_\kappa
            \pra[\bigg]{
                \left(\frac\welo\rho\right)^2\ddff(\rho\ddff W^\delta\ast\rho)
                -2\frac\welo\rho\ddff\welo\ddff W^\delta\ast\rho
                -2\welo\ddff W^\delta\ast\ddff\welo
            } \\
        \nonumber
        &\qquad
        -2\delta\sum_\kappa\welo\,\ddff W^\delta\ast\rho \\
        \label{eq:part.2a}
        &=s\delta\sum_\kappa
            \pra[\bigg]{
                \rho \ddff\bra[\bigg]{\left(\frac\welo\rho\right)^2}\ddff W^\delta\ast\rho
                -2\welo\ddff\left(\frac\welo\rho\ddff W^\delta\ast\rho\right)
                -2\welo\ddff\ddff W^\delta\ast\welo
            } \\            
        \label{eq:part.2b}
        &\qquad
        -2\delta\sum_\kappa \ddff\welo\,  W^\delta\ast\rho .
    \end{align}
    Above, we have omitted the boundary terms that arise in the summation by parts with the discrete Laplacian $\ddff$: these terms can be treated in full analogy to the previous proof, now with the convolution $\ddff W^\delta\ast\rho$ in place of the potential $V^\delta$. The relevant properties \eqref{eq:CV} is readily verified also for $\ddff W^\delta\ast\rho$.
    
    For the sum in \eqref{eq:part.2b}, we have
    \begin{align*}
        -2\delta\sum_\kappa \ddff\welo\,W^\delta\ast\rho 
        = -2\delta\sum_\kappa \partial_t\rho\,W^\delta\ast\rho
        = -\partial_t\bra[\bigg]{\delta\sum_\kappa\rho\,W^\delta\ast\rho}
        = -2\partial_t\fnc(\rho).
    \end{align*}
    Concerning the first two terms inside the sum in line \eqref{eq:part.2a} above:
    \begin{equation}
        \label{eq:techi}
        \begin{split}            
        \rho \ddff\set[\bigg]{\bra[\bigg]{\frac\welo\rho}^{\!2}}\ddff W^\delta\ast\rho
        &= \rho\Biggl\{
            2(\ddff W^\delta\ast\rho)\,\frac\welo\rho\ddff\frac\welo\rho 
           +(\ddff W^\delta\ast\rho)\,\pra[\bigg]{\bra[\bigg]{\delta_+\frac\welo\rho}^{\!2}+\bra[\bigg]{\delta_-\frac\welo\rho}^{\!2}}\Biggr\}, \\
        2\welo\ddff\left\{\frac\welo\rho\ddff W^\delta\ast\rho\right\}
        &= \rho\,\Biggl\{
            2(\ddff\ddff W^\delta\ast\rho)\,\left(\frac\welo\rho\right)^{\!2}
            +2(\ddff W^\delta\ast\rho)\,\frac\welo\rho\ddff\frac\welo\rho \\
            &\qquad\quad
            +2\left[
                (\delta_+\ddff W^\delta\ast\rho)\,\delta_+\frac\welo\rho
                +(\delta_-\ddff W^\delta\ast\rho)\,\delta_-\frac\welo\rho
            \right]\frac\welo\rho
        \Biggr\}.
        \end{split}
    \end{equation}
    At this point, we need an elementary inequality that is analogous to \eqref{eq:elementary.1} in the proof for potentials of Theorem~\ref{thm:contractivity}~\eqref{itm:potconvexity}: given three bounded functions $A$, $B$, $C$ on $\Z$ of which $A$ is everywhere positive, then for all real $u$ and $v$,
    \begin{align}
        \label{eq:elementary.2}
        (A\ast\rho)u^2 + 2(B\ast\rho) uv + (C\ast\rho)v^2 
        = (u^2A+2uvB+v^2C)\ast\rho
        \ge \left[\left(C-\frac{B^2}A\right)\ast\rho\right]v^2.
    \end{align}
    Substitute \eqref{eq:elementary.2} with the obvious choices for $A$, $B$, $C$ and $u$, $v$ into \eqref{eq:techi}, and recall that $\rho$ is non-negative and of unit mass to obtain
    \begin{align*}
        \MoveEqLeft
        \rho \ddff\set[\bigg]{\bra[\bigg]{\frac\welo\rho}^{\!2}}\ddff W^\delta\ast\rho
        -2\welo\ddff\left\{\frac\welo\rho\ddff W^\delta\ast\rho\right\} \\
        &= \rho \left\{
            (\ddff W^\delta\ast\rho)\,\left(\delta_+\frac\welo\rho\right)^{\!2}
            -2(\delta_+\ddff W^\delta\ast\rho)\,\delta_+\frac\welo\rho
            -(\ddff\ddff W^\delta\ast\rho)\,\left(\frac\welo\rho\right)^{\!2}
        \right\} \\
        &\qquad + \rho \left\{
            (\ddff W^\delta\ast\rho)\,\left(\delta_- \frac\welo\rho\right)^{\!2}
            -2(\delta_-\ddff W^\delta\ast\rho)\,\delta_- \frac\welo\rho
            -(\ddff\ddff W^\delta\ast\rho)\,\left(\frac\welo\rho\right)^{\!2}
        \right\} \\
        &\ge -\rho\left\{
            \left(\ddff\ddff W^\delta+\frac{(\delta_+\ddff W^\delta)^{2}}{\ddff W^\delta}\right)\ast\rho
            +\left(\ddff\ddff W^\delta+\frac{(\delta_-\ddff W^\delta)^{2}}{\ddff W^\delta}\right)\ast\rho
        \right\}\left(\frac\welo\rho\right)^{\!2}\\
        &\ge -\sup\left(2\ddff\ddff W^\delta+\frac{(\delta_+\ddff W^\delta)^2+(\delta_-\ddff W^\delta)^2}{\ddff W^\delta}\right)\,\frac{\welo^2}\rho
        = -\sup\left(\frac{\ddff\big[(\ddff W^\delta)^2\big]}{\ddff W^\delta}\right)\,\frac{\welo^2}\rho.
    \end{align*}
    Finally, we obtain for the last term in the sum in line \eqref{eq:part.2a} by the Cauchy-Schwarz inequality that
    \begin{align*}
        &-\delta\sum_\kappa\welo_\kappa\big[\ddff\ddff W^\delta \ast\welo\big]_\kappa
        = -\delta^2\sum_{\kappa,\lambda} \welo_\kappa\big[\ddff\ddff W^\delta\big]_{\kappa-\lambda}\welo_\lambda \\
        &\qquad \ge -\sup|\ddff\ddff W^\delta|\,\left(\delta\sum_\kappa|\welo_\kappa|\right)^2 
        \ge -\sup|\ddff\ddff W^\delta|\,\left(\delta\sum\rho\right)\left(\delta\sum\frac{\welo^2}\rho\right).
    \end{align*}
    In combination, we arrive at \eqref{eq:keyode} with $\lambda=-\Lambda$ for $\Lambda$ given in \eqref{eq:lambda.2}.
\end{proof}

\subsection{Quadratic porous medium equation}
Finally, we consider the special interaction energy functional
\begin{align*}
    \fnc(\rho) = -\frac{\delta}4\sum_{\kappa,\lambda} \delta|\kappa-\lambda|\rho_\kappa\rho_\lambda. 
\end{align*}
Note that the choice $W^\delta_\kappa=|\kappa|$ does not fit the previous case since $W^\delta$ lacks strict convexity. Still, the variation can be computed as in \eqref{eq:varyinteract},
\begin{align*}
    \frac{\partial\fnc}{\partial\rho_\kappa} 
    = -\frac{\delta}2\sum_\lambda \delta|\kappa-\lambda|\rho_\lambda,
\end{align*}
and so
\begin{align*}
    \ddff_\kappa\frac{\partial\fnc}{\partial\rho} 
    = -\frac{\delta}2\sum_\lambda \delta|\lambda-\kappa|\ddff_\lambda\rho
    = -\delta\sum_{\lambda>\kappa}\delta(\lambda-\kappa),\ddff_\lambda\rho,
\end{align*}
where the last equality follows since $\ddff\rho$ is of zero average. Now recalling Lemma \ref{lem:dLaplace}, we finally obtain that
\begin{align*}
    \ddff\frac{\partial\fnc}{\partial\rho} = (\ddff)^{-1}\ddff\rho = \rho,
\end{align*}
showing that the corresponding gradient flow is indeed given by the quadratic porous medium equation \eqref{eq:dFpme}, i.e., $\dot\rho=\ddff(\rho^2)$. Again, the semigroup property is verified in analogy to the proof of Lemma \ref{lem:semigroup}, using a $\delta$-dependent Lipschitz estimate of $\rho\mapsto\ddff\rho$ in $\dprbtwo$; recall that $\rho$ is uniformly bounded by $\rho\le1/\delta$.
\begin{proof}[Proof of Theorem~\ref{thm:contractivity}, part \ref{lbl:contractPME}]
    The approximating family has the form
    \begin{align*}
        \partial_t\rho^\eps = s\ddff\big[(\rho^\eps)^2\big],
        \quad
        \partial_t\welo^\eps = 
        (\rho^\eps)^2 + 2s\rho^\eps\ddff\welo^\eps.
    \end{align*}
    Neglecting super-indices, we obtain
    \begin{align*}
        -\partial_t\left(\delta\sum\frac{\welo^2}{\rho}\right)
        &= \delta\sum\left[\left(\frac\welo\rho\right)^2\partial_t\rho-2\frac\welo\rho\partial_t\welo\right] \\
        &= s\delta\sum\left[\left(\frac\welo\rho\right)^2\ddff(\rho^2)-4\welo\ddff\welo\right]
        -2\delta\sum\welo\rho \\
        &= s\delta\sum\left[4(\welo_{\kappa+1}-\welo_\kappa)^2 - \left(\frac{\welo_{\kappa+1}^2}{\rho_{\kappa+1}^2}-\frac{\welo_{\kappa}^2}{\rho_{\kappa}^2}\right)(\rho_{\kappa+1}^2-\rho_\kappa^2)\right] 
        -2\delta\sum\welo\rho. 
    \end{align*}
    On the one hand,
    \begin{align*}
        &4(\welo_{\kappa+1}-\welo_\kappa)^2 - \left(\frac{\welo_{\kappa+1}^2}{\rho_{\kappa+1}^2}-\frac{\welo_{\kappa}^2}{\rho_{\kappa}^2}\right)(\rho_{\kappa+1}^2-\rho_\kappa^2) \\
        &= 3(\welo_{\kappa+1}-\welo_\kappa)^2-2\welo_{\kappa+1}\welo_\kappa + \frac{\rho_\kappa^2}{\rho_{\kappa+1}^2}\welo_{\kappa+1}^2 + \frac{\rho_{\kappa+1}^2}{\rho_{\kappa}^2}\welo_{\kappa}^2 \\
        &= 3(\welo_{\kappa+1}-\welo_\kappa)^2 + \left(\frac{\rho_\kappa}{\rho_{\kappa+1}}\welo_{\kappa+1} - \frac{\rho_{\kappa+1}}{\rho_{\kappa}}\welo_{\kappa}\right)^2 \ge 0.
    \end{align*}
    And on the other hand, since the discrete Laplacian is self-adjoint,
    \begin{equation*}
        \delta\sum\welo\,\rho  
        = \delta\sum\welo\,\ddff(\ddff)^{-1}\rho
        = \delta\sum\ddff\welo\,(\ddff)^{-1}\rho
        = \delta\sum\partial_s\rho\,(\ddff)^{-1}\rho
        = \partial_s\fnc(\rho).      %
    \end{equation*}
    The boundary terms $\bdry{\welo}{(\ddff)^{-1}\rho}$ arising from the summation by parts with $\ddff$ are treated in a similar way as before.
\end{proof}

\bibliographystyle{abbrv}
\bibliography{main}	

\end{document}